\numberwithin{equation}{section}
\theoremstyle{definition}
\newtheorem{theorem}[equation]{Theorem}
\newtheorem{lemma}[equation]{Lemma}
\newtheorem{corollary}[equation]{Corollary}
\newtheorem{definition}[equation]{Definition}
\newtheorem{example}[equation]{Example}
\newtheorem{remark}[equation]{Remark}
\newcommand{\be}{\begin{equation}}
\newcommand{\ee}{\end{equation}}
\newcommand{\bes}{\begin{equation*}}
\newcommand{\ees}{\end{equation*}}
\newcommand{\ba}{\begin{aligned}}
\newcommand{\ea}{\end{aligned}}
\newcommand{\cA}{\mathcal{A}}
\newcommand{\cC}{\mathcal{C}}
\newcommand{\cD}{\mathcal{D}}
\newcommand{\cE}{\mathcal{E}}
\newcommand{\cF}{\mathcal{F}}
\newcommand{\cL}{\mathcal{L}}
\newcommand{\cO}{\mathcal{O}}
\newcommand{\cS}{\mathcal{S}}
\newcommand{\cU}{\mathcal{U}}
\newcommand{\cW}{\mathcal{W}}
\newcommand{\FORAL}{\text{ for all }}
\newcommand{\qand}{\quad\text{and}\quad}
\newcommand{\qforal}{\quad\text{for all}\quad}
\newcommand{\bC}{\mathbb{C}}
\newcommand{\bF}{\mathbb{F}}
\newcommand{\bK}{\mathbb{K}}
\newcommand{\bL}{\mathbb{L}}
\newcommand{\bR}{\mathbb{R}}
\newcommand{\bZ}{\mathbb{Z}}
\newcommand{\diag}{\operatorname{diag}}
\newcommand{\esssup}{\operatorname{ess.sup}}
\newcommand{\id}{\operatorname{id}}
\newcommand{\Irr}{\operatorname{Irr}}
\newcommand{\spn}{\operatorname{span}}
\newcommand{\Wmin}[1]{\cW^{\text{min}}_{#1}}
\newcommand{\Wmax}[1]{\cW^{\text{max}}_{#1}}
\newcommand{\ol}{\overline}
\begin{document}

\title[Strongly Peaking Representations and Compressions]{Strongly Peaking Representations and Compressions of Operator Systems}

\author[K.R. Davidson]{Kenneth R. Davidson}
\address{Department of Pure Mathematics\\ University of Waterloo\\Waterloo, ON, N2L 3G1, Canada}
\email{krdavids@uwaterloo.ca}

\author[B. Passer]{Benjamin Passer}
\address{Department of Mathematics\\ United States Naval Academy\\ Annapolis, MD, 21402, United States}
\email{passer@usna.edu}

%%%%%%%%%%%%%%%%%%%%%%%%%%%%
\begin{abstract}
We use Arveson's notion of strongly peaking representation to generalize uniqueness theorems for free spectrahedra and matrix convex sets which admit minimal presentations. A fully compressed separable operator system necessarily generates the $C^*$-envelope and is such that the identity is the direct sum of strongly peaking representations. In particular, a fully compressed presentation of a separable operator system is unique up to unitary equivalence. Under various additional assumptions, minimality conditions are sufficient to determine a separable operator system uniquely. 
\end{abstract}

\subjclass[2010]{47A20, 47A13, 46L07, 47L25}
\keywords{operator systems, matrix convex sets, matrix ranges, unitary invariants}
\thanks{First author supported by NSERC Grant Number 2018-03973.}

\maketitle

%%%%%%%%%%%%%%%%%%%%%%%%%%%%%%%%%%%%%%%
\section{Introduction}
%%%%%%%%%%%%%%%%%%%%%%%%%%%%%%%%%%%%%%%

If $K$ is a compact convex set, then classically one studies the space $A(K)$ of affine functions on $K$ as a unital subspace of $C(K)$. 
The noncommutative analogue is an operator system, which may be defined in the following ``concrete'' way.
An \textit{operator system} $\cS$ is a vector subspace of a unital $C^*$-algebra such that $1 \in \cS$ and $\cS$ is closed under the adjoint operation.

The order structure of an operator system $\cS$ is determined by positive matrices over $\cS$.
The morphisms between operator systems are \textit{UCP} (\textit{unital completely positive}) maps, i.e. linear maps which map the unit to the unit and preserve positivity of matrices of any size. 
Unital order preserving maps are automatically completely contractive, that is, they are contractive on matrices of all sizes over $\cS$. The isomorphisms between operator systems are unital complete order isomorphisms, which are in turn equivalent to unital completely isometric isomorphisms.

An operator system $\cS$ may be embedded into $B(H)$ in various ways, and in particular the $C^*$-algebra that it generates is generally not unique. We write $(\cA,j)$ where $j:\cS \to \cA$ is a unital complete order embedding with $\cA = C^*(j(\cS))$, and we call $(\cA, j)$ a $C^*$-cover of $\cS$. Among the $C^*$-covers of $\cS$, there is always a smallest one, called the $C^*$-envelope $(C^*_e(\cS), \iota)$. This is characterized by the property that whenever $(\cA,j)$ is a $C^*$-cover, there is a (unique) unital $*$-homomorphism $\pi: \cA \to C^*_e(\cS)$ such that $\pi \circ j = \iota$. The analogy in the classical case is that $A(K)$ sits inside $C(\ol{\operatorname{ext}( K)})$, where $\ol{\operatorname{ext}(K)}$, the closure of the extreme points, is the Shilov boundary. This is the smallest closed subset of $K$ on which every affine function attains it maximum modulus. In contrast, the Choquet boundary $\operatorname{ext}(K)$ need not be closed.

The existence of the $C^*$-envelope was conjectured by Arveson in \cite{Arv69, Arv72} and established by Hamana in \cite[Theorem 4.1]{Ham79}. 
However, this construction did not address existence of the noncommutative Choquet boundary. 
If $\cS$ is an operator system which generates a $C^*$-algebra $\cA$, then Arveson also defined that a $*$-representation $\pi$ of $\cA$ has the \textit{unique extension property} (relative to $\cS$) if $\phi = \pi|_{\cS}$ has a unique UCP extension to $\cA$, namely $\pi$. 
He proposed that the analogue of an extreme point should be a boundary representation, an irreducible representation with the unique extension property, and conjectured that the direct sum of all boundary representations would provide a faithful representation of the $C^*$-envelope. 
A new proof of the existence of the $C^*$-envelope by Dritschel and McCullough \cite{DMcC05} shed light on the problem, and in \cite[Theorem 7.1]{Arv08}, Arveson proved there are sufficiently many boundary representations in the separable case. 
The first author and Kennedy established the conjecture in full generality in \cite[Theorem 3.4]{DavKen15}.

In this manuscript, we will consider separable operator systems $\cS$. We associate the matrix state space 
\[ \bK =  \bigcup\limits_{n=1}^\infty K_n,\] 
where
\[ K_n = \{ \phi: \cS \to M_n \,\, | \,\, \phi \text{ is UCP}\}.\]
The matrix state space $\bK$ has the structure of a matrix convex set \cite[Definition 1.1]{WW99}. When $\cS$ is in fact finite-dimensional, there is an operator $d$-tuple $T = (T_1, \ldots, T_d) \in B(H)^d$ such that $\cS = \cS_T$, where
\[
 \cS_T := \text{span}\{I, T_1, \ldots, T_d, T_1^*, \ldots, T_d^*\}. 
\]
The matrix state space of $\cS_T$ is reflected in the \textit{matrix range} of $T$ (see \cite[\S 2.4]{Arv72} and \cite[\S 2.2]{DDSS}). The following definition uses Arveson's extension theorem \cite[Theorem 1.2.3]{Arv69}.

%%%%%%%%%%%%%%%%%%%%%%%%%%%%
\begin{definition}
If $T \in B(H)^d$, then the \textit{matrix range} of $T$ is $\cW(T) := \bigcup\limits_{n=1}^\infty \cW_n(T)$, where
\[
 \begin{aligned}
  \cW_n(T) 	&:= \{ (\phi(T_1), \ldots, \phi(T_d)) \,\, | \,\, \phi: \cS_T \to M_n(\bC) \text{ is UCP}\}  \\
			&\hphantom{:}= \{(\phi(T_1), \ldots, \phi(T_d)) \,\, | \,\, \phi: B(H) \to M_n(\bC) \text{ is UCP}\}. 
 \end{aligned} 
\]
\end{definition}

The matrix state space of $\cS$, similarly the matrix range of $T$ when $\cS = \cS_T$, is sufficient to determine $\cS$ up to complete order isomorphism. 
We are interested in imposing additional conditions on $\cS$, or on $T$, which instead enable us to determine a spatial presentation uniquely up to unitary equivalence. 
The prototype result along these lines is \cite[Theorem 3]{Arv70}, which shows that two irreducible compact operators are unitarily equivalent if and only if they have the same matrix range.
Recall the following definitions from \cite[Definition 6.1]{DDSS} and \cite[Definition 3.20]{Pas19}.

%%%%%%%%%%%%%%%%%%%%%%%%%%%%%
\begin{definition}
A tuple $T \in B(H)^d$ is \textit{minimal} if any nontrivial decomposition $T \cong X \oplus Y$ has $\cW(X) \subsetneq \cW(T)$. Similarly, $T$ is \textit{fully compressed} if whenever $X = P_G T|_G$ is the compression of $T$ to a proper closed subspace $G$ of $H$, it follows that $\cW(X) \subsetneq \cW(T)$.
\end{definition}

Every fully compressed tuple is minimal, but not every minimal tuple is fully compressed, as an arbitrary subspace $G$ need not be reducing. Minimality is sufficient to determine $T$ from $\cW(T)$ if $T$ is assumed to act on a finite-dimensional space. This claim, modulo a polar dual and special treatment of the point $0$, is really a uniqueness claim for free spectrahedra as in \cite[Theorem 1.2]{Zalar} or \cite[Theorem 1.2]{HKM13}. For $T$ which act on infinite-dimensional spaces, minimality is insufficient. See \cite[p.207]{Arv69}, \cite[Example 3.12]{Zalar}, \cite[Example 4.7]{PasSha18}, and \cite[Example 3.14]{Pas19}, noting that the first three examples are irreducible, and the fourth is compact. For compact tuples $T \in K(H)^d$, one may include a nonsingularity assumption to fix the issue \cite[Theorem 6.9]{DDSS} or instead require $T$ to be fully compressed \cite[Corollary 4.3]{PasSha18}. While uniqueness of fully compressed compact tuples may be shown without reference to nonsingularity, a compact tuple is in fact fully compressed if and only if it is both minimal and nonsingular \cite[Theorem 4.4]{PasSha18}. 

The following definition provides a natural extension of minimal and fully compressed tuples to the more general setting of operator systems.

%%%%%%%%%%%%%%%%%%%%%%%%%%%%%
\begin{definition}\label{def:fc_intro_os}
Let $\cS \subset B(H)$ be an operator system. Then $\cS$ is called \textit{fully compressed} if for every proper, closed subspace $G$ of $H$, the compression map $s \mapsto P_G \, s |_{G}$ is not completely isometric on $\cS$. Similarly, $\cS$ is called \textit{minimal} if for every proper, closed,  reducing subspace $G$ of $H$, the compression map $s \mapsto P_G \, s|_G$ is not completely isometric on $\cS$.
\end{definition}

Our goal is to classify fully compressed operator systems up to unitary equivalence, and to further clarify the role of minimality in these uniqueness theorems. In this pursuit, we rely heavily on peaking properties of an operator system.  The concept of a \textit{crucial matrix extreme point} (see Definition \ref{def:crucial_MEP}), a noncommutative generalization of an isolated extreme point of a compact convex set, is used repeatedly in \cite{PasSha18}. Arveson's earlier notion of strongly peaking representation, as in Definition \ref{def:Arv_strongpeak}, plays a similar role but is technically distinct. In particular, it relies upon the concrete presentation of an operator system. We nonetheless use the latter to characterize fully compressed separable operator systems, and consequently fully compressed operator tuples, up to unitary equivalence. Our main results, Theorem \ref{thm:fc_full_equiv_general} and Corollary \ref{cor:fc_uniqueness}, are as follows.

%%%%%%%%%%%%%%%%%%%%%%%%%%%%%
\begin{theorem}
Let $\cS \subset B(H)$ be a separable operator system. Then the following are equivalent.
\begin{enumerate}
\item $\cS$ is fully compressed.

\item The identity representation on $C^*(\cS)$ is unitarily equivalent to $\bigoplus\limits_{\pi \in \Omega} \pi$, where $\Omega$ is some collection of unitarily inequivalent boundary representations such that no $\pi \in \Omega$ annihilates $C^*(\cS) \cap K(H)$.

\item $\cS$ is minimal, $C^*(\cS) = C^*_e(\cS)$, and the identity representation on $C^*(\cS)$ is unitarily equivalent to $\bigoplus\limits_{\pi \in \Xi} \pi$, where $\Xi$ is some collection of irreducible representations $\pi$ such that  $\pi(C^*(\cS)) \cap K(H_\pi) \ne \{0\}$.

\item The identity representation on $C^*(\cS)$ is unitarily equivalent to $\bigoplus\limits_{\pi \in \Lambda} \pi$, where $\Lambda$ consists of the strongly peaking representations without multiplicity.
\end{enumerate}
\end{theorem}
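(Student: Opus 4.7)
I plan to establish the equivalences via the cycle $(1) \Rightarrow (4) \Rightarrow (2) \Rightarrow (3) \Rightarrow (1)$. The primary tool is Arveson's characterization (in the separable case): an irreducible $*$-representation $\pi$ of $C^*(\cS)$ is strongly peaking if and only if it is a boundary representation for $\cS$ with $\pi(C^*(\cS)) \cap K(H_\pi) \ne \{0\}$. Combined with the Davidson--Kennedy realization of $C^*_e(\cS)$ as the direct sum of boundary representations, this gives enough rigidity to run the cycle.

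For $(1) \Rightarrow (4)$, I would first show $C^*(\cS) = C^*_e(\cS)$. If the Shilov boundary ideal $J$ were nonzero, then $K := \overline{JH}$ is a nonzero reducing subspace, and its orthocomplement $G := K^\perp$ is a proper closed subspace on which $J$ acts as zero. Hence the compression to $G$ factors through the quotient $C^*(\cS)/J$, which is completely isometric on $\cS$ by definition of the Shilov ideal; this contradicts full compression. Next, using separability and the compact structure of Arveson's characterization, I would decompose the identity into its strongly peaking (pure point, compact-containing) part and a complementary part with no compact pieces, and argue that the complementary part can be compressed away without changing any matrix norm on $\cS$. Full compression therefore forces $\iota \cong \bigoplus_\Lambda \pi$ with each $\pi$ strongly peaking. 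If any $\pi$ appeared with multiplicity, one copy could be compressed away to a proper reducing subspace, so the multiplicity must be one.

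The implication $(4) \Rightarrow (2)$ follows directly from Arveson's characterization: each strongly peaking $\pi$ is a boundary representation with $\pi(C^*(\cS)) \cap K(H_\pi) \ne \{0\}$, and ``without multiplicity'' is precisely ``unitarily inequivalent.'' For $(2) \Rightarrow (3)$, any faithful direct sum of boundary representations realizes the $C^*$-envelope, hence $C^*(\cS) = C^*_e(\cS)$. Minimality follows from the unique extension property of the boundary summands: dropping any $\pi$ in the decomposition would remove its strongly peaking witness and so strictly decrease a matrix norm on $\cS$. The compact intersection condition in $(3)$ is identical to that in $(2)$. For $(3) \Rightarrow (1)$, suppose for contradiction that compression to a proper closed $G \subsetneq H$ were completely isometric. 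Each $\pi \in \Xi$ is strongly peaking by Arveson's characterization, so it peaks at some $s_\pi \in M_{n_\pi}(\cS)$, and this peaking norm is realized by a compact operator in $\pi(C^*(\cS))$ on a finite-dimensional norm-attaining subspace $V_\pi \subset H_\pi^{(n_\pi)}$. Complete isometry of the compression forces $V_\pi \subset G^{(n_\pi)}$; irreducibility of $\pi$ and the presence of compacts in $\pi(C^*(\cS))$ let the $\pi(C^*(\cS))$-orbit of $V_\pi$ fill $H_\pi^{(n_\pi)}$, showing $G \supset H_\pi$ for each $\pi$. Thus $G$ is reducing, and minimality in $(3)$ then rules out $G \subsetneq H$.

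The main obstacle is $(1) \Rightarrow (4)$: converting the purely qualitative full compression hypothesis into the explicit spectral decomposition of $\iota$ as a multiplicity-free direct sum of strongly peaking representations. Separating the discrete, compact-containing part of $\iota$ from any continuous or compact-free residue (which a priori could be a direct integral) and showing the residue can be compressed away without losing any matrix norm on $\cS$ is the delicate technical content, and it relies essentially on Arveson's characterization together with the compression arguments developed for crucial matrix extreme points in \cite{PasSha18}.
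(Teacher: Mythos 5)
Your cycle rests on a ``characterization'' that is false as stated: it is not true that an irreducible representation $\pi$ of $C^*(\cS)$ is strongly peaking if and only if it is a boundary representation with $\pi(C^*(\cS)) \cap K(H_\pi) \ne \{0\}$. Only the forward direction holds (Remark \ref{remark:sp_imp_acc} and Lemma \ref{lem:peaking and compacts}). For the converse, take $\cS = C^*(\cS) = C^*_e(\cS) = C([0,1])$: every character $\delta_x$ is a boundary representation, and being one-dimensional its image trivially contains compact operators, yet no $\delta_x$ is strongly peaking because no point of $[0,1]$ is isolated (Lemma \ref{lem:open points}); the representation $\delta_0$ for $T = \bigoplus 1/n$ gives the same failure. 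You are also conflating two distinct compactness conditions: condition (2) concerns the concrete ideal $C^*(\cS) \cap K(H)$, while condition (3) concerns $\pi(C^*(\cS)) \cap K(H_\pi)$, and these are \emph{not} ``identical'' ($\delta_0$ for $T=\bigoplus 1/n$ separates them: it annihilates $C^*(\cS)\cap K(H)$ but its image is $\bC = K(\bC)$). The correct substitutes are: a boundary representation that does not annihilate $C^*(\cS) \cap K(H)$ is strongly peaking (\cite[Theorem 7.2]{Arv11}, which is what drives $(2)\Rightarrow(3)$), and an irreducible subrepresentation of the identity whose image contains compacts is strongly peaking \emph{provided} the compression to $H_\pi^\perp$ is not completely isometric \emph{and} $\pi$ factors through the $C^*$-envelope (Corollary \ref{cor:envelope and minimal}). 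This is exactly where the minimality and $C^*(\cS)=C^*_e(\cS)$ hypotheses of (3) must be consumed; in your $(3)\Rightarrow(1)$ you invoke minimality only at the very end, so the claim ``each $\pi\in\Xi$ is strongly peaking'' is unsupported (note (3) does not even assume the $\pi$ are boundary representations).

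Two further steps would fail as written. In $(1)\Rightarrow(4)$, the compression to $G=(\ol{JH})^\perp$ for the Shilov ideal $J$ \emph{factors through} the completely isometric quotient $C^*(\cS)/J$, but the resulting $*$-homomorphism out of $C^*(\cS)/J$ need not be injective, so the compression need not be completely isometric and no contradiction with full compression results (e.g.\ $T = S \oplus 1$ for the unilateral shift $S$: here $J = K(\ell^2)\oplus 0$ is the Shilov ideal and the compression to $G = 0 \oplus \bC$ is the character $a \mapsto \hat a(1)$, far from isometric on $\cS_T$). Likewise, ``the complementary part can be compressed away without changing any matrix norm'' is precisely the content of Lemma \ref{lem:detection_dichotomy}\,(\ref{item:detection_fun_no_compacts}) and cannot be waved through. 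In $(3)\Rightarrow(1)$, the peaking norm $\|\pi(S)\|$ need not be attained on a finite-dimensional subspace; even when it is, complete isometry of the compression to $G$ only forces $G$ to \emph{meet} the norm-attaining subspace, not contain it, and since $G$ is not assumed invariant under $\pi(C^*(\cS))$ the orbit argument does not yield $H_\pi \subseteq G$. The paper closes the cycle differently: from (4) it uses Corollary \ref{cor:peaking_projection} to produce a projection $Q \in C^*(\cS)$ with $\pi(Q) = vv^*$ and $\sigma(Q)=0$ for all irreducible $\sigma \not\cong \pi$, and then contradicts the unique extension property of $\id$, which holds because $\id$ is a direct sum of boundary representations.
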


%%%%%%%%%%%%%%%%%%%%%%%%%%%%%
\begin{corollary}
If two separable operator systems are fully compressed and completely order isomorphic, then any complete order isomorphism between them is a unitary equivalence.
\end{corollary}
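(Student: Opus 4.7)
The plan is first to extend any given complete order isomorphism $\varphi : \cS \to \cT$ to a $*$-isomorphism of the ambient $C^*$-algebras, and then to implement that $*$-isomorphism by a unitary using the canonical multiplicity-free decompositions provided by Theorem~\ref{thm:fc_full_equiv_general}.

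Writing $\cS \subset B(H)$ and $\cT \subset B(K)$, the implication $(1)\Rightarrow(2)$ of the main theorem gives $C^*(\cS) = C^*_e(\cS)$ and $C^*(\cT) = C^*_e(\cT)$. The universal property of the $C^*$-envelope, applied to $\varphi$ and to $\varphi^{-1}$, then extends $\varphi$ uniquely to a $*$-isomorphism $\Phi : C^*(\cS) \to C^*(\cT)$. The task thereby reduces to producing a unitary $U : H \to K$ with $\Phi(a) = U a U^*$ for every $a \in C^*(\cS)$, since restricting this identity to $\cS$ yields $\varphi(s) = U s U^*$.

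I would next view $\Phi$ as a representation $\rho : C^*(\cS) \to B(K)$ via $\rho(a) = \Phi(a)$ and decompose it using condition~(4) of the theorem applied to $\cT$: writing $\id_{C^*(\cT)} \cong \bigoplus_{\sigma \in \Lambda_{\cT}} \sigma$, where $\Lambda_{\cT}$ is a system of unitary equivalence class representatives of the strongly peaking representations for $\cT$, one obtains $\rho \cong \bigoplus_{\sigma \in \Lambda_{\cT}} \sigma \circ \Phi$. The same condition applied to $\cS$ yields $\id_{C^*(\cS)} \cong \bigoplus_{\pi \in \Lambda_{\cS}} \pi$. If the unitary equivalence classes represented by $\{\sigma \circ \Phi : \sigma \in \Lambda_{\cT}\}$ coincide with those in $\Lambda_{\cS}$, then $\rho \cong \id_{C^*(\cS)}$ as representations of $C^*(\cS)$, and such a unitary equivalence is exactly the unitary $U$ we need.

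The essential technical point, and where I would focus the main work, is to verify that $\sigma \mapsto \sigma \circ \Phi$ carries strongly peaking representations for $\cT$ bijectively onto strongly peaking representations for $\cS$. Because strong peaking (Definition~\ref{def:Arv_strongpeak}) is phrased in terms of UCP behavior on elements of the operator system itself, it is an invariant of the abstract operator system structure, and hence transports along any $*$-isomorphism of $C^*$-covers that extends a complete order isomorphism. Once that compatibility is spelled out, the multiplicity-free nature of both decompositions makes the matching of $\Lambda_{\cT} \circ \Phi$ with $\Lambda_{\cS}$ automatic, and the argument closes with standard Hilbert-space bookkeeping that assembles the summand-wise unitaries into the global unitary $U$ implementing $\varphi$.
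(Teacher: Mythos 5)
Your proposal is correct and follows essentially the same route as the paper: use the main theorem to see that both systems generate their $C^*$-envelopes, extend the complete order isomorphism to a $*$-isomorphism of those envelopes, observe that strongly peaking representations correspond under this $*$-isomorphism, and conclude via the multiplicity-free decomposition in condition~(4). Your parenthetical that strong peaking is ``an invariant of the abstract operator system structure'' is only true after the reduction to the $C^*$-envelope (the paper stresses that strong peaking depends on the $C^*$-cover), but since you perform that reduction first, the argument is sound.
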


This corollary should be contrasted with results on approximate unitary equivalence, such as \cite[Theorem 6.11]{DDSS}, \cite[Theorem 1.1]{Zalar}, or \cite[Theorem 6.45]{Kriel}.  We also note that since strongly peaking representations are reliant on the particular $C^*$-cover of $\cS$, one must first use the fact that a fully compressed operator system generates the (unique) $C^*$-envelope in order to derive the corollary. Surprisingly, in Example \ref{ex:Cuntz-Toeplitz} we find that fully compressed operator systems need not generate GCR $C^*$-algebras.

In section \ref{sec:wrapup}, we explore various special cases of the main theorem and discuss its consequences in the language of matrix convexity. Under various GCR assumptions, we may replace fully compressed operator systems with minimal ones, as in Theorem \ref{thm:type I}.

%%%%%%%%%%%%%%%%%%%%%%%%%%%%%
\begin{theorem}
Let $\cS \subset B(H)$ be a separable operator system. Then the following are equivalent.
\begin{enumerate}
\item $\cS$ is fully compressed.
\item $C^*(\cS) = C^*_e(\cS)$ and $\id$ is the direct sum of inequivalent irreducible representations $\pi_i$, where each class $[\pi_i]$ is isolated in $\widehat{C^*(\cS)}$.
\item $\cS$ is minimal, $C^*(\cS) = C^*_e(\cS)$, and $C^*(\cS) \cap K(H)$ is essential.
\item $\cS$ is minimal, $C^*(\cS) = C^*_e(\cS)$, and the maximal GCR ideal $I$ is essential.
\end{enumerate}
\end{theorem}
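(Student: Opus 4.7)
The plan is to reduce everything to Theorem~\ref{thm:fc_full_equiv_general}, which already characterizes fully compressed separable operator systems in terms of strongly peaking and boundary representations, and to translate its conditions into the spectral and ideal-theoretic language used here. I would argue via the cycle $(1) \Rightarrow (2) \Rightarrow (3) \Rightarrow (4) \Rightarrow (1)$, pivoting through the main theorem as needed.

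For $(1) \Rightarrow (2)$, I would invoke condition~(4) of the main theorem to decompose $\id$ into pairwise inequivalent strongly peaking representations of $C^*(\cS) = C^*_e(\cS)$. A strongly peaking witness $a \in M_n(\cS)_+$ satisfying $\|\pi^{(n)}(a)\| > \|\sigma^{(n)}(a)\|$ for every $\sigma \not\sim \pi$ separates $[\pi]$ from the rest of $\widehat{C^*(\cS)}$ in the hull--kernel topology, so $[\pi]$ is isolated. For $(2) \Rightarrow (3)$, when $[\pi_i]$ is isolated there is an ideal $J_i \lhd C^*(\cS)$ concentrated on the $H_{\pi_i}$ summand with $\pi_i(J_i) \neq 0$; separability and irreducibility force $\pi_i$ to be CCR and $\pi_i(J_i) \supseteq K(H_{\pi_i})$, producing compact elements of $J_i$ that act non-trivially on $H_{\pi_i}$ and trivially elsewhere, so they lie in $C^*(\cS) \cap K(H)$. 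Faithfulness of $\id$ makes this ideal essential, since any nonzero ideal meets some $\pi_i(J_i) = K(H_{\pi_i})$ and thus meets $J_i \cap K(H)$. Once each $\pi_i$ is recognized as a boundary representation meeting compacts, condition~(2) of the main theorem returns that $\cS$ is fully compressed, so in particular minimal.

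For $(3) \Rightarrow (4)$ the inclusion $C^*(\cS) \cap K(H) \subseteq I$ into the maximal GCR ideal $I$ is automatic since the former is CCR, so essentiality passes to $I$. For $(4) \Rightarrow (1)$ I would extract the CCR layer $I_1$ of the GCR composition series of $I$, which remains essential in $C^*(\cS)$; any irreducible $\sigma$ with $\sigma(I_1) \neq 0$ then satisfies $K(H_\sigma) \subseteq \sigma(I_1) \subseteq \sigma(C^*(\cS))$. Minimality is used to force $\id$ to decompose as a direct sum of such irreducibles: a subrepresentation of $\id$ annihilating $I_1$ would correspond to a reducing subspace whose complement supports the essential ideal, producing a proper compression of $\cS$ that remains completely isometric, contradicting minimality. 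Condition~(3) of the main theorem then delivers~(1).

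The main obstacle is $(4) \Rightarrow (1)$: essentiality of the full maximal GCR ideal does not immediately translate into atomic spectral data for $\id$, since a CCR element is compact in every irreducible representation but may fail to be globally compact on $H$. Minimality must be used delicately to exclude subrepresentations of $\id$ that do not meet $C^*(\cS) \cap K(H)$, and the central projections coming from essentiality of $I_1$ must be exploited sharply enough to force each atom of $\id$ to be an irreducible representation meeting compacts.
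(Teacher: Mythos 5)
Your overall architecture matches the paper's: pivot $(1)\Leftrightarrow(2)$ through condition (4) of Theorem \ref{thm:fc_full_equiv_general} and the correspondence between strongly peaking representations and isolated points of $\widehat{C^*_e(\cS)}$ (Lemma \ref{lem:open points}); get $(2)\Rightarrow(3)$ from the compact ideal summands attached to each $\pi_i$ (Lemma \ref{lem:peaking and compacts}); note $(3)\Rightarrow(4)$ is trivial; and prove $(4)\Rightarrow(1)$ by decomposing $\id$ over the essential GCR ideal. Two small glosses along the way: a strongly peaking witness $S$ does not by itself exhibit an element of $\bigcap_{\sigma\not\cong\pi}\ker\sigma$ outside $\ker\pi$ --- you need the functional calculus step $f(S^*S)$ as in Lemma \ref{lem:peaking and compacts} to convert the norm gap into membership in the separating ideal; and the claim that an ideal with a unique irreducible representation is a copy of the compacts is Rosenberg's solution of the Naimark problem, which deserves to be invoked explicitly. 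Both are repairable and are exactly what the paper's supporting lemmas supply.

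The genuine gap is where you suspected it: $(4)\Rightarrow(1)$. Your use of minimality only rules out a subrepresentation of $\id$ that \emph{annihilates} the essential CCR layer $I_1$. That shows $\id$ is induced by its restriction to $I_1$ (equivalently, is nondegenerate on $I_1$), but it does not show that $\id$ is a \emph{direct sum} of irreducibles. A nondegenerate representation of a CCR algebra can be a direct integral with a diffuse part and have no irreducible subrepresentations at all (e.g.\ $C_0(\bR)$ acting by multiplication on $L^2(\bR)$), and nothing in your argument excludes this. The paper closes this hole with Lemma \ref{lem:direct integral}: if the direct integral of $\id|_I$ over $\widehat{I}$ has a nonatomic part, one can choose a Borel set $B$ with $0<\mu(B)<1$ capturing the essential supremum of $\zeta\mapsto\|\zeta(a_i)\|$ for a dense sequence $(a_i)$, so that restriction to the corresponding proper reducing subspace is an isometric (hence completely isometric) $*$-homomorphism on $I$; essentiality of $I$ then promotes faithfulness on $I$ to faithfulness on $C^*(\cS)$, and this proper completely isometric reducing compression contradicts minimality of $\cS$. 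Only after the measure is forced to be atomic do you get $\id\cong\bigoplus\pi_i$ with each $\pi_i(I)\supseteq K(H_{\pi_i})$, at which point Corollary \ref{cor:envelope and minimal} (or condition (3) of the main theorem, with minimality killing multiplicity) finishes as you describe. Without an argument of this kind, the implication $(4)\Rightarrow(1)$ is not established.
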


The GCR condition above generalizes results in \cite[\S 6]{DDSS} and \cite[\S 4]{PasSha18} for compact tuples, and further special cases such as Corollary \ref{cor:countable_nice} are similarly applicable to the compact case. We also find in Theorem \ref{thm:block_diagonal} that if $C^*(\cS)$ is block diagonal, then $\cS$ is minimal if and only if it is fully compressed, without the need to assume $C^*(\cS) = C^*_e(\cS)$. 

In the remainder of section \ref{sec:wrapup}, we describe the relationship between crucial matrix extreme points and (finite-dimensional) strongly peaking representations, as in Theorem \ref{thm:no_prob_envelope}, and use this to derive further special cases. The section closes with a comparison of crucial matrix extreme points with Kriel's \textit{matrix exposed} points \cite[Definition 6.1]{Kriel}, placing our results in the context of matrix convex sets and free spectrahedra.

%%%%%%%%%%%%%%%%%%%%%%%%%%%%%%%%%%%%%%%
\section{Strongly Peaking Representations and Compressions}\label{sec:infinite_dimension}
%%%%%%%%%%%%%%%%%%%%%%%%%%%%%%%%%%%%%%%

In this section, we prove a uniqueness theorem for fully compressed operator systems. Namely, any complete order isomorphism between fully compressed operator systems is in fact a unitary equivalence.

%%%%%%%%%%%%%%%%%%%%%%%%%%%%
\begin{definition}\label{def:fc_enlightened}
Let $\cS \subset B(H)$ be an operator system. Then $\cS$ is called \textit{fully compressed} if for every proper, closed subspace $G$ of $H$, the compression map $s \mapsto P_G \, s |_{G}$ is not completely isometric on $\cS$. Similarly, $\cS$ is called \textit{minimal} if for every proper, closed,  reducing subspace $G$ of $H$, the compression map $s \mapsto P_G \, s|_G$ is not completely isometric on $\cS$.
\end{definition}

To avoid any degeneracy issues, we do not consider the trivial $C^*$-algebra $\{0\}$ to be unital, and hence $\cS = \{0\}$ is not an operator system. For $T \in B(H)^d$ and the corresponding operator system $\cS_T$, Definition \ref{def:fc_enlightened} agrees with the previous notions of minimal and fully compressed tuples.

In \cite[Definition 7.1]{Arv11}, Arveson defines strongly peaking representations of a separable operator system to fill the role of isolated extreme points in the noncommutative setting. Here, we let $\Irr(\cA)$ consist of irreducible representations of $\cA$, and we use $\cong$ to denote unitary equivalence. We also denote the ampliations of a representation $\pi$ with the same symbol. That is, $\pi$ may be evaluated at any matrix over $\cA$.

%%%%%%%%%%%%%%%%%%%%%%%%%%%%
\begin{definition}\label{def:Arv_strongpeak}
Let $\cS$ be a separable operator system. A representation $\pi: C^*(\cS) \to B(H_\pi)$ is called \textit{strongly peaking} if $\pi$ is irreducible and there exists a matrix $S \in M_n(\cS)$ such that
\[
  \| \pi(S)  \| > \sup \big\{  \| \sigma(S) \| : \sigma \in \operatorname{Irr}(C^*(\cS)),\  \sigma \not\cong \pi \big\}.
\]
\end{definition}

%%%%%%%%%%%%%%%%%%%%%%%%%%%%
\begin{remark}\label{remark:sp_imp_acc}
Strongly peaking representations are by definition included in any collection of irreducible representations of $C^*(\cS)$ which completely norms the separable operator system $\cS$. 
Following \cite[Remark 2.5 and Lemma 3.3]{DavKen15}, $\cS$ may be completely normed by boundary representations which are obtained by a finite or countably infinite dilation $\phi_1 \prec \phi_2 \prec \ldots$ of pure matrix states, i.e. pure UCP maps from $\cS$ into matrix algebras, such that the dilation culminates in a pure and maximal UCP map. 
To be consistent with \cite[\S 6]{Kriel}, we call any boundary representation $\pi$ obtained by this dilation procedure \textit{accessible}, whether the dimension of $\pi$ is finite or infinite. 
In particular, every strongly peaking representation is an accessible boundary representation.
\end{remark}

For $\cS_T$, Arveson's definition of strongly peaking is equivalent to the condition
\[
 \cW(\pi(T)) \not\subseteq \cW \Big( \bigoplus_{\substack{\sigma\in\Irr(C^*(\cS))\\ \sigma\not\cong \pi}} \sigma(T) \Big).
\]
Here $\pi$ is an irreducible representation, and the direct sum ranges over irreducible representations which are not unitarily equivalent to $\pi$.

Strongly peaking representations are irrevocably tied to the $C^*$-cover of $\cS$. Every strongly peaking representation $\pi$ of $\cS \subset C^*(\cS)$ factors through the $C^*$-envelope as $\pi = \widetilde{\pi} \, j$, since $\pi$ is boundary, and in this case $\widetilde{\pi}$ is also strongly peaking for $j(\cS) \subset C^*_e(\cS)$. However, it is possible for $\widetilde{\pi}$ to be strongly peaking even though $\pi$ is not. Thus, the $C^*$-envelope has the largest possible family of strongly peaking representations. As seen in \cite[Theorem 7.2]{Arv11}, strongly peaking representations are also closely tied to compact operators, and we will expand upon this result.

%%%%%%%%%%%%%%%%%%%%%%%%%%%%
\begin{lemma}\label{lem:peaking and compacts}
Let $\cS \subset B(H)$ be a separable operator system, let $\pi$ be a strongly peaking representation for $\cS$, and let $J := \bigcap\limits_{\substack{\sigma\in\Irr(C^*(\cS))\\ \sigma\not\cong \pi}} \ker \sigma$. Then the following hold.
\begin{itemize}
\item $\pi(C^*(\cS))$ contains $K(H_\pi)$. In particular, $\pi(J) = K(H_\pi)$ and $\pi|_J$ is injective.
\item $\pi$ is a subrepresentation of the identity, specifically $\id \cong \pi^{(n)} \oplus \tau$ where $n \in \bZ^+ \cup \{\aleph_0\}$ and $\tau$ is some (possibly vacuous) representation satisfying $\tau|_J = 0$.
\item If $C^*(\cS)$ is multiplicity free, then $J$ is a summand of $C^*(\cS) \cap K(H)$, hence $\pi$ is induced by its restriction to $C^*(\cS) \cap K(H)$.
\end{itemize}
\end{lemma}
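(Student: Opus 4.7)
Setting $\rho := \bigoplus_{\sigma \in \Irr(C^*(\cS)),\, \sigma \not\cong \pi} \sigma$, one has $J = \ker \rho$, and the direct sum $\pi \oplus \rho$ is faithful on $C^*(\cS)$ because the Jacobson radical of a $C^*$-algebra vanishes. The engine of the proof is Arveson's Theorem~7.2 of \cite{Arv11}, which delivers $K(H_\pi) \subset \pi(C^*(\cS))$ directly from the strongly peaking hypothesis; the remainder of the argument is structural. I expect the main technical hurdle to be upgrading this inclusion to the sharper equality $\pi(J) = K(H_\pi)$, which will require first identifying $J$ itself as a simple $C^*$-algebra.

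For the first bullet, injectivity of $\pi|_J$ is immediate, since $\ker \pi \cap J = \ker(\pi \oplus \rho) = 0$. To see that $\pi(J) \neq 0$, let $c := \|\pi(S)\|$ and $c_0 := \sup\{\|\sigma(S)\| : \sigma \in \Irr(C^*(\cS)),\, \sigma \not\cong \pi\} < c$, and apply continuous functional calculus with $f \colon [0,\infty) \to [0,\infty)$ vanishing on $[0, c_0^2]$ but positive at $c^2$: the element $f(S^*S) \in M_n(C^*(\cS))$ is annihilated by $\rho^{(n)}$, so its entries lie in $J$, while $\pi^{(n)}$ does not annihilate it. The standard bijection between $\widehat{J}$ and the classes $[\sigma] \in \widehat{C^*(\cS)}$ with $\sigma(J) \neq 0$ then forces $\widehat{J} = \{[\pi|_J]\}$ with $\pi|_J$ faithful, so $\operatorname{Prim}(J) = \{0\}$ and $J$ is simple. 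Since $\pi$ is irreducible, $K(H_\pi)$ is an essential ideal of $\pi(C^*(\cS))$, so the nonzero closed ideal $\pi(J)$ meets $K(H_\pi)$ nontrivially and hence contains it; conversely $K(H_\pi)$ is a nonzero closed ideal of the simple algebra $\pi(J) \cong J$, forcing $\pi(J) = K(H_\pi)$.

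For the second bullet, let $H_0 := \overline{JH}$. Because $J$ is an ideal of $C^*(\cS)$, the subspace $H_0$ reduces $C^*(\cS)$, $J$ acts nondegenerately on $H_0$, and $J$ annihilates $H_0^\perp$. Since $J \cong K(H_\pi)$ via $\pi|_J$, every nondegenerate representation of $J$ is a direct sum of copies of $\pi|_J$; separability pins the multiplicity $n$ in $\bZ^+ \cup \{\aleph_0\}$. Unique extension from the nondegenerate ideal $J$ to $C^*(\cS)$ promotes $\pi|_J^{(n)}$ to $\pi^{(n)}$, and $\tau := \id|_{H_0^\perp}$ satisfies $\tau|_J = 0$. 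For the third bullet, multiplicity-freeness of $C^*(\cS)$ forces $n=1$, giving $\id \cong \pi \oplus \tau$; hence $J$ acts as $K(H_\pi) \oplus 0 \subset K(H)$ and embeds into $C^*(\cS) \cap K(H)$. Define $J' := \ker\pi \cap C^*(\cS) \cap K(H)$: then $J J' = 0$ and $J \cap J' = 0$ by the first bullet. Any $k \in C^*(\cS) \cap K(H)$ has $\pi(k) \in K(H_\pi) = \pi(J)$, so $k = j + (k-j)$ with $j \in J$ and $k-j \in J'$, exhibiting $C^*(\cS) \cap K(H) = J \oplus J'$ as an internal direct sum of ideals. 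That $\pi$ is induced by its restriction to $C^*(\cS) \cap K(H)$ then reduces to unique extension from the summand $J$ back to $C^*(\cS)$.
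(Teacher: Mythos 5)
Your proposal is correct and shares the paper's skeleton almost step for step: the functional-calculus element $f(S^*S)$ whose entries land in $J$ but survive $\pi$, the observation that $\pi|_J$ is the unique irreducible representation of $J$ and is faithful, the nondegenerate/null decomposition of $\id$ relative to $J$ for the second bullet, and the forcing of $n=1$ under multiplicity-freeness for the third. The one genuine divergence is how you pass from ``$J$ is simple with a unique irreducible representation'' to ``$J$ is an algebra of compact operators.'' The paper invokes Rosenberg's solution of the Naimark problem \cite{Ros53}: a separable $C^*$-algebra with a unique irreducible representation up to unitary equivalence is isomorphic to $K(H')$, which immediately yields $\pi(J)=K(H_\pi)$ since $\pi|_J$ is then equivalent to the identity representation of $J$. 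You instead import the inclusion $K(H_\pi)\subseteq \pi(C^*(\cS))$ as a black box from \cite[Theorem 7.2]{Arv11} and recover $\pi(J)=K(H_\pi)$ by playing essentiality of $K(H_\pi)$ in $\pi(C^*(\cS))$ against simplicity of $\pi(J)\cong J$. That closing argument is valid, but it rests the hardest part of the lemma on a citation to the very theorem the authors say they are expanding upon, and whose proof runs through the same circle of ideas; you should at least verify that Arveson's statement applies in the generality needed here (arbitrary concrete presentation, no envelope or multiplicity hypotheses). In fact you do not need it at all: once you have shown that $J$ is simple, separable, and has $\widehat{J}=\{[\pi|_J]\}$, Rosenberg's theorem hands you $J\cong K(H')$ directly, and your essentiality argument (or the fact that all irreducible representations of the compacts are equivalent to the identity) identifies $H'$ with $H_\pi$. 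Swapping that one citation makes your proof self-contained and essentially identical to the paper's. On the plus side, your explicit construction of the complementary ideal $J'=\ker\pi\cap C^*(\cS)\cap K(H)$ in the third bullet actually justifies the word ``summand'' in the statement, a detail the paper's proof leaves implicit.
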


\begin{proof}
Let $S = [s_{ij}] \in M_n(\cS)$ be chosen so that
\[
 \| \pi(S) \| = 1 > r := \sup_{\substack{\sigma\in\Irr(C^*(\cS))\\ \sigma\not\cong \pi}} \| \sigma(S)\|.
\]
Define $J$ as above, and note $\pi$ does not vanish on $J$. Indeed, if $f(x) = 0$ on $[0,r^2]$ and $f(x) = (x-r^2)/(1-r^2)$ on $[r^2,1]$, then $A := f(S^*S)$ belongs to $M_n(J)$, but $\|\pi(A)\|=1$.

Since $\pi$ is irreducible, so is $\pi|_J$. Moreover, every irreducible representation of $J$ lifts to a (unique) irreducible representation of $C^*(\cS)$ by \cite[Lemma I.9.14]{DavBook}. Except for the unitary equivalence class of $\pi$, every other irreducible representation of $C^*(\cS)$ vanishes on $J$, so $\pi|_J$ is the unique irreducible representation of $J$ up to unitary equivalence. By Rosenberg's solution of the Naimark problem \cite[Theorem 4]{Ros53}, $J$ is isomorphic to the compact operators on some separable or finite-dimensional Hilbert space. It follows that $\pi|_J$ is equivalent to the identity representation on $J$, and hence $\pi(J) = K(H_\pi) \subseteq \pi(C^*(\cS))$. In particular, $\pi|_J$ is injective.

By \cite[Lemma 2.11.1]{DixmierBook}, we may decompose $\id \cong \mu \oplus \tau$ where $\mu|_J$ is nondegenerate and $\tau|_J = 0$. Since $J$ is isomorphic to the compact operators $K(H_\pi)$, $\mu|_J$ is equivalent to a multiple of the identity representation, that is, a multiple of $\pi|_J$. It follows that $\mu$ is a multiple of $\pi$ with some positive multiplicity. If $C^*(\cS)$ is multiplicity free, then this multiplicity is exactly 1, and thus $\id \cong \pi \oplus \tau$ where $\tau|_J = 0$. Hence $\id(J) = K(H_\pi) \oplus 0 \subset C^*(\cS) \cap K(H)$, and $\pi$ is induced by its restriction to $C^*(\cS) \cap K(H)$.
\end{proof}

The reader is cautioned that in Lemma \ref{lem:peaking and compacts}, the strongly peaking representations are particularly sensitive to the concrete presentation. We will need the following consequence of the lemma.

%%%%%%%%%%%%%%%%%%%%%%%%%%%%
\begin{corollary}\label{cor:peaking_projection}
Let $\cS \subset B(H)$ be a separable operator system, and let $\pi: C^*(\cS) \to B(H_\pi)$ be a strongly peaking representation for $\cS$.
Suppose that $v \in H_\pi$ is an arbitrary unit vector. 
Then there is a projection $P \in C^*(\cS)$ such that $\pi(P) = vv^*$ and $\sigma(P) = 0$ for any $\sigma \in \Irr(C^*(\cS))$ with $\sigma \not\cong \pi$. If, in addition, $C^*(\cS)$ is multiplicity free, then $P$ has rank one.
\end{corollary}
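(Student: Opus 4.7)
The plan is to deduce the corollary almost directly from Lemma~\ref{lem:peaking and compacts}. That lemma already asserts that $\pi|_J \colon J \to K(H_\pi)$ is an injective $*$-homomorphism whose image is all of $K(H_\pi)$, so it is automatically a $*$-isomorphism of $J$ onto $K(H_\pi)$. My first step is therefore to define $P \in J$ to be the unique preimage of the rank-one projection $vv^* \in K(H_\pi)$ under this isomorphism; since $*$-isomorphisms preserve the relations $P^* = P = P^2$, the element $P$ is automatically a projection in $J \subseteq C^*(\cS)$.

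The two required identities then follow by inspection. One has $\pi(P) = vv^*$ by construction, and for any irreducible $\sigma$ with $\sigma \not\cong \pi$ the definition $J = \bigcap_{\sigma \not\cong \pi} \ker \sigma$ gives $P \in \ker \sigma$, so $\sigma(P) = 0$.

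For the multiplicity-free addendum, I would invoke the second bullet of Lemma~\ref{lem:peaking and compacts} to write $\id \cong \pi^{(n)} \oplus \tau$ with $\tau|_J = 0$; the multiplicity-free hypothesis forces $n = 1$, so $\id \cong \pi \oplus \tau$ on $H \cong H_\pi \oplus H_\tau$. Under this identification, $P$ is carried to $\pi(P) \oplus \tau(P) = vv^* \oplus 0$, which is manifestly a rank-one operator on $H$.

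I do not foresee any genuine obstacle: once Lemma~\ref{lem:peaking and compacts} is in hand, the corollary is really just a translation of the rank-one projection $vv^*$ back through the $*$-isomorphism $\pi|_J$, together with a glance at the direct-sum decomposition of $\id$ to extract the rank statement.
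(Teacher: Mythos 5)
Your proposal is correct and follows essentially the same route as the paper: both arguments pull $vv^*$ back through $\pi|_J$ using Lemma~\ref{lem:peaking and compacts} and then use the decomposition $\id \cong \pi^{(n)} \oplus \tau$ with $\tau|_J = 0$ for the rank-one conclusion. The only cosmetic difference is that you verify $P$ is a projection by noting $\pi|_J$ is a $*$-isomorphism onto $K(H_\pi)$, whereas the paper reads this off from $\id(P)$ being a direct sum of $0$ and copies of $vv^*$; both are valid.
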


\begin{proof}
As in Lemma \ref{lem:peaking and compacts}, let $J$ be the intersection of $\ker \sigma$ over all irreducible representations $\sigma \not\cong \pi$, so that $\pi(J) = K(H_\pi)$. We may also decompose $\id \cong \pi^{(n)} \oplus \tau$ where $n \in \bZ^+ \cup \{\aleph_0\}$ and $\tau|_J = 0$.  
Since $vv^*$ has rank one, there is some $P \in J$ with $\pi(P) = vv^* \in K(H_\pi)$, and certainly $\sigma(P) = 0$ for irreducible $\sigma \not\cong \pi$ by the definition of $J$. 
Finally, $P = \id(P)$ is the direct sum of $0$ and $n$ copies of $\pi(P) = vv^*$, so $P$ is a projection. 
If $C^*(\cS)$ is multiplicity free, then $n = 1$, whence $P$ has rank one.
\end{proof}

Similar techniques also provide a partial converse to Lemma \ref{lem:peaking and compacts}. We note in particular that the compression condition below applies whenever $\cS$ is minimal.

%%%%%%%%%%%%%%%%%%%%%
\begin{corollary} \label{cor:envelope and minimal}
Let $\cS \subset B(H)$ be a separable operator system, and let $\pi$ be an irreducible subrepresentation of the identity representation of $C^*(\cS)$. Assume that $\pi(C^*(\cS)) \cap K(H_\pi) \ne \{0\}$ and that the compression of $\cS$ to the reducing subspace $H_\pi^\perp$ is not completely isometric.  Then $\pi$ is induced by its restriction to $C^*(\cS) \cap K(H)$. If, in addition, $\pi$ factors through the $C^*$-envelope, then $\pi$ is strongly peaking for $\cS$.
\end{corollary}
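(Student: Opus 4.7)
The proof decomposes into two parts, with the compression assumption driving the first and a Shilov ideal argument driving the second. Since $\pi$ is irreducible and $\pi(C^*(\cS))$ meets $K(H_\pi)$ nontrivially, the standard fact about irreducible C*-subalgebras of $B(H_\pi)$ containing a nonzero compact gives $\pi(C^*(\cS)) \supseteq K(H_\pi)$. Decomposing the identity as $\id \cong \pi \oplus \tau$ along $H = H_\pi \oplus H_\pi^\perp$, the central object will be the two-sided ideal $\cI := \ker \tau$ of $C^*(\cS)$, whose elements act as $\pi(a) \oplus 0$ on $H_\pi \oplus H_\pi^\perp$.

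The compression assumption is invoked as follows. Choose $S \in M_n(\cS)$ with $\alpha := \|\tau(S)\| < \|\pi(S)\| =: \beta$, and apply a continuous function $f$ vanishing on $[0, \alpha^2]$ with $f(\beta^2) = 1$ to $S^*S$ to produce $A := f(S^*S) \in M_n(C^*(\cS))$ satisfying $\tau^{(n)}(A) = 0$ and $\|\pi^{(n)}(A)\| = 1$. Every entry of $A$ lies in $\cI$, and some entry has nonzero image under $\pi$. Thus $\pi(\cI)$ is a nonzero two-sided ideal of $\pi(C^*(\cS))$, and a nonzero ideal of an irreducible subalgebra of $B(H_\pi)$ containing $K(H_\pi)$ must itself contain $K(H_\pi)$. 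Lifting any $k \in K(H_\pi)$ to an element $a \in \cI$ yields $a = k \oplus 0$, which is compact on $H$, so $a \in C^*(\cS) \cap K(H)$. Consequently $\pi(C^*(\cS) \cap K(H)) \supseteq K(H_\pi)$, the restriction $\pi|_{C^*(\cS) \cap K(H)}$ is nondegenerate, and $\pi$ is induced by this restriction.

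For the second conclusion, set $J_0 := \cI \cap K(H)$. The preceding argument shows $\pi|_{J_0}: J_0 \to K(H_\pi)$ is a $*$-isomorphism. For any $\sigma \in \Irr(C^*(\cS))$ with $\sigma \not\cong \pi$, the restriction $\sigma|_{J_0}$ is either zero or a nondegenerate irreducible representation of $J_0 \cong K(H_\pi)$. Since $K(H_\pi)$ has a unique irreducible representation up to unitary equivalence, the second alternative would give $\sigma|_{J_0} \cong \pi|_{J_0}$; combined with uniqueness of extension from a nondegenerate ideal representation, this forces $\sigma \cong \pi$, a contradiction. Therefore $J_0 \subseteq J^\pi := \bigcap \{\ker\sigma : \sigma \in \Irr(C^*(\cS)),\, \sigma \not\cong \pi\}$, while $\pi(J^\pi) \supseteq \pi(J_0) \neq 0$, so $J^\pi \not\subseteq \ker\pi$.

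The conclusion then follows from a Shilov boundary argument. If the quotient map $\cS \to C^*(\cS)/J^\pi$ were completely isometric, then $J^\pi$ would be a boundary ideal contained in the Shilov ideal $N = \ker(C^*(\cS) \to C^*_e(\cS))$; since $\pi$ factors through $C^*_e(\cS)$, we would have $\ker\pi \supseteq N \supseteq J^\pi$, contradicting the previous paragraph. Hence some $S \in M_n(\cS)$ satisfies $\|S\| > \|S + J^\pi\|$, and since $\|S + J^\pi\| = \sup \{\|\sigma(S)\| : \sigma \in \Irr(C^*(\cS)),\, \sigma \not\cong \pi\}$ while $\|S\|$ equals the maximum of $\|\pi(S)\|$ and this supremum, we obtain that $\|\pi(S)\|$ strictly dominates the supremum, which is precisely the strongly peaking condition. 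The main obstacle is producing the ideal $\cI$: without the compression assumption it could be trivial, and the ladder from compacts in $\pi(C^*(\cS))$ to compacts in $C^*(\cS) \cap K(H)$ could not be built.
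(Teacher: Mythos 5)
Your proof is correct and follows essentially the same route as the paper: the decomposition $\id \cong \pi \oplus \tau$, the functional-calculus argument showing $\pi(\ker\tau) \neq 0$, the identification of an ideal isomorphic to $K(H_\pi)$ sitting inside $C^*(\cS) \cap K(H)$, and the Shilov-ideal separation step. The only cosmetic differences are that you quotient by $J^\pi = \bigcap_{\sigma\not\cong\pi}\ker\sigma$ rather than by the compact ideal itself, and you justify $\|S\| = \max\{\|\pi(S)\|, \sup_{\sigma\not\cong\pi}\|\sigma(S)\|\}$ via the atomic representation rather than via $\tau$ directly; both are sound.
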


\begin{proof}
We may write $\id \cong \pi \oplus \tau$ with respect to $H = H_\pi \oplus H_\tau$, where $\tau$ might be vacuous. Since the compression to $H_\pi^\perp$ is not completely isometric on $\cS$, there is a matrix $S=[s_{ij}] \in M_n(\cS)$ with
$\| \pi(S)\| > \|\tau(S) \| $.
Let $J = \ker\tau$. Similar reasoning as in the proof of Lemma~\ref{lem:peaking and compacts} shows that $\pi$ is nonzero on $J$. Since we have assumed $\pi(C^*(\cS)) \cap K(H_\pi) \not= \{0\}$, i.e. $\pi(C^*(\cS)) \supseteq K(H_\pi)$ since $\pi$ is irreducible, $K(H_\pi)$ is the smallest nonzero ideal of $\pi(C^*(\cS))$. It follows that $\pi(J) \supseteq K(H_\pi)$. 

Set $J_1 := J \cap \pi^{-1}(K(H_\pi))$. Then $\pi(J_1) = K(H_\pi)$, so we also have that
\[
 J_1 = \id(J_1) = \pi(J_1) \oplus \{0\} = K(H_\pi)\oplus\{0\} \subseteq C^*(\cS) \cap K(H) .
\]
Therefore, $\pi$ is induced by its restriction to $C^*(\cS) \cap K(H)$.

Next, define $J_0 := J_1 \cap \ker \pi$. Then $\tau(J_0) = \{0\} = \pi(J_0)$, so $J_0 = \id(J_0) = \{0\}$, and hence $J_1 \cong J_1/J_0 \cong K(H_\pi)$.
Thus, $J_1$ has a unique irreducible representation, namely $\pi|_{J_1}$, up to unitary equivalence. 
It follows from \cite[Lemma I.9.14]{DavBook} that $\pi$ is the unique irreducible representation of $C^*(\cS)$ that does not vanish on $J_1$. 
That is, every other irreducible representation of $C^*(\cS)$ annihilates $J_1$.

If $\pi$ factors through the $C^*$-envelope, then the Shilov ideal (the kernel of the natural quotient map $C^*(\cS) \to C^*_e(\cS)$) is contained in $\ker \pi$. 
The Shilov ideal is the unique largest ideal such that the quotient is completely isometric on $\cS$. 
Since $J_1 \not\subseteq \ker \pi$, the quotient by $J_1$ is not completely isometric on $\cS$. 
Let $S = [s_{ij}]\in M_n(\cS)$ be chosen so that $1 = \|S\| > \| S + M_n(J_1) \|$.
Then since any irreducible $\sigma\not\cong \pi$ vanishes on $J_1$,
\[
 1 = \|S\|  > r := \| S + M_n(J_1) \| \ge  \sup_{\substack{\sigma\in\Irr(C^*(\cS))\\ \sigma\not\cong \pi}} \| \sigma(S) \| . 
\]
Finally, $J_1 \subseteq J = \ker \tau$ implies $\| \tau(S) \| \leq \|S + M_n(J_1)\| = r$, so $\id \cong \pi \oplus \tau$ gives
\[
 \|S\| = \max \{ \|\pi(S)\|, \|\tau(S)\| \} \le \max \{ \|\pi(S)\|, r \},
\]
and hence $\| \pi(S) \| = 1 > r$. Therefore, $\pi$ is strongly peaking.
\end{proof}

%%%%%%%%%%%%%%%%%%%%%%%%%%%%
\begin{example} \label{ex:tight}
The $C^*$-envelope assumption is required in the previous corollary to conclude that $\pi$ is strongly peaking. 
This follows from \cite[Example 3.14]{Pas19}, given as Example~\ref{ex:nonpeaking_scum} below, in which $\cS$ is minimal and $\id$ decomposes
as the direct sum of three irreducible representations. Each subrepresentation contains the compact operators, but one of the
representations fails to factor through the $C^*$-envelope. That representation is not strongly peaking.

The assumption that the range of $\pi$ contain the compacts is also necessary.
Let $\cS$ be the operator system spanned by the standard generators of the Cuntz algebra $\cO_2$.
Since $\cO_2$ is simple, if $\pi$ is an arbitrary irreducible representation of $\cO_2$, then $\pi$ is automatically faithful, hence it is completely isometric on $\cS$. 
Moreover, $\pi(C^*(\cS)) \cap K(H_\pi) = \{0\}$. 
This system is hyperrigid \cite[Corollary 3.4]{Arv11}, and thus every irreducible representation is a boundary representation. 
However, no representation is strongly peaking, as there are many distinct irreducible representations up to unitary equivalence, all of which are faithful. 
Thus, our results are consistent with \cite[Example 3.12]{Zalar}.
\end{example}

The next two lemmas identify the role of compact operators with regard to proper compressions. 

%%%%%%%%%%%%%%%%%%%%%%%%%%%%
\begin{lemma}\label{lem:detection_dichotomy}
Let $\cS \subset B(H)$ be a separable operator system, and suppose $\pi: C^*(\cS) \to B(H_\pi)$ is an irreducible representation. Then either
\begin{enumerate}
\item\label{item:detection_fun_compacts} $\pi$ does not annihilate $C^*(\cS) \cap K(H)$, in which case $\pi$ is unitarily equivalent to a subrepresentation of the identity representation, or
\item\label{item:detection_fun_no_compacts} $\pi$ annihilates $C^*(\cS) \cap K(H)$, in which case for any subspace $G$ of $H$ of finite codimension, there is a UCP map $\phi : P_G \cS |_G \to B(H_\pi)$ given by $\phi( P_G \, s|_G) = \pi(s)$ for $s \in \cS$. 
\end{enumerate}
\end{lemma}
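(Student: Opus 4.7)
The dichotomy is automatic; the content is in the consequences of each case. Let $I := C^*(\cS) \cap K(H)$. For case (\ref{item:detection_fun_compacts}), assume $\pi(I) \ne \{0\}$. Standard arguments using an approximate unit of $I$ show that $\pi|_I$ is both nondegenerate and irreducible (any $\pi|_I$-invariant subspace is automatically $\pi(C^*(\cS))$-invariant). The structure theorem for $C^*$-subalgebras of $K(H)$ writes $I \cong \bigoplus_j K(H_j^{\mathrm{abs}})$, so $\pi|_I$ is unitarily equivalent to the identity representation on one summand $K(H_i^{\mathrm{abs}})$. Since $I$ embeds faithfully into $B(H)$, any rank-one projection $q \in K(H_i^{\mathrm{abs}})$ has $qH \neq \{0\}$. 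Fixing a unit vector $v' \in qH$, the cyclic subspace $\overline{C^*(\cS) v'}$ is $\id$-invariant, and its vector state restricted to $I$ is a pure state supported on $K(H_i^{\mathrm{abs}})$. Hence the induced $I$-representation on this subspace is unitarily equivalent to $\id_{K(H_i^{\mathrm{abs}})}$, in particular irreducible, which forces irreducibility of the full cyclic $C^*(\cS)$-subrepresentation. By \cite[Lemma I.9.14]{DavBook} (invoked elsewhere in this section), this irreducible subrepresentation is unitarily equivalent to $\pi$, exhibiting $\pi$ as a subrepresentation of $\id$.

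For case (\ref{item:detection_fun_no_compacts}), assume $\pi(I) = \{0\}$. The formula $\phi(P_G s|_G) := \pi(s)$ is well-defined because $P_G s|_G = 0$ forces $s = P_G s P_{G^\perp} + P_{G^\perp} s P_G + P_{G^\perp} s P_{G^\perp}$ to be finite-rank (as $G^\perp$ is finite-dimensional), whence $s \in I$ and $\pi(s) = 0$. To obtain the UCP conclusion, I would produce a UCP extension $\widetilde\pi : B(H) \to B(H_\pi)$ of $\pi$ that annihilates all of $K(H)$. Then $\phi'(x) := \widetilde\pi(x \oplus I_{G^\perp})$ defines a UCP map $B(G) \to B(H_\pi)$ whose restriction to $P_G \cS|_G$ agrees with $\phi$, since $P_G s P_G + P_{G^\perp} - s$ is finite-rank and thus killed by $\widetilde\pi$. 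To construct such a $\widetilde\pi$, I would factor $\pi$ as $C^*(\cS) \twoheadrightarrow C^*(\cS)/I \hookrightarrow B(H)/K(H)$ (the second arrow is a $C^*$-inclusion precisely because $I = C^*(\cS) \cap K(H)$), apply Arveson's extension theorem inside the Calkin algebra to extend the descended representation to a UCP map $\rho: B(H)/K(H) \to B(H_\pi)$, and compose with the Calkin quotient.

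The main obstacle is the UCP claim in case (\ref{item:detection_fun_no_compacts}): an arbitrary Arveson extension of $\pi$ to $B(H)$ need not annihilate $K(H)$, in which case the candidate formula would acquire a spurious compact correction. Routing the extension through the Calkin algebra eliminates this defect, and it is available precisely because $\pi$ vanishes on $I$.
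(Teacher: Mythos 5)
Your proof is correct and follows essentially the same route as the paper's: case (\ref{item:detection_fun_compacts}) rests on the fact that $\pi$ restricted to $C^*(\cS)\cap K(H)$ is an irreducible representation of a $C^*$-algebra of compact operators together with unique extension of irreducible representations from an ideal, and case (\ref{item:detection_fun_no_compacts}) factors $\pi$ through $C^*(\cS)/(C^*(\cS)\cap K(H))$ sitting inside the Calkin algebra, using that a compression to a cofinite-dimensional subspace is the identity modulo finite-rank operators. The only cosmetic difference is that in case (\ref{item:detection_fun_no_compacts}) you invoke Arveson's extension theorem to build a UCP extension of $\pi$ annihilating $K(H)$, whereas the paper directly exhibits the UCP map $P_G\, s|_G \mapsto s + J$ into the quotient and composes it with the descended representation.
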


\begin{proof}
Let $J = C^*(\cS) \cap K(H)$. 
By \cite[Lemma 2.11.1]{DixmierBook}, we may decompose $\pi \cong \pi_a \oplus \pi_s$, where $\pi_a$ acts on $\ol{ \pi(J)H}$ and $\pi_s$ acts on $(\pi(J)H)^\perp$. Since $\pi$ is irreducible, either $\pi = \pi_a$ or $\pi = \pi_s$.

Whenever $\pi = \pi_a$, $\pi|_J$ is an irreducible representation of a $C^*$-algebra of compact operators, so it is unitarily equivalent to a subrepresentation of the identity representation on $J$ by \cite[Theorem I.10.7]{DavBook}. 
In particular, there is an isometry $V: H_\pi \to H$ such that $\pi(b) = V^* b V$ for all $b \in J$. 
The subspace $VH$ reduces $J$ and hence reduces $C^*(\cS)$. 
Moreover, the extension of $\pi|_J$ from $J$ to $C^*(\cS)$ is unique by \cite[Lemma I.9.14]{DavBook}, and thus 
\[ \pi(a) = V^* a V \qforal a \in C^*(\cS). \]
Therefore, $\pi$ is unitarily equivalent to the compression of the identity representation to $VH$.
So $\pi$ is a subrepresentation of the identity.

On the other hand, suppose that $\pi = \pi_s$. Then $\pi$ factors as $\pi' q$ where $q$ is the quotient map onto $C^*(\cS)/J$.
If $G$ has finite codimension, then the compression onto $G$ annihilates only finite rank operators, so the map $q': P_G \cS |_G \to C^*(\cS)/J$ given by $q'( P_G \, s|_G) = s + J$ is UCP.
Hence the map $\phi = \pi' q'$ is UCP.
\end{proof}

For boundary representations, we can extend the previous lemma a bit more.  

%%%%%%%%%%%%%%%%%%%%%%%%%%%%
\begin{lemma}\label{lem:more_complete_dichotomy}
Suppose $\cS$ is a separable operator system and $\pi: C^*(\cS) \to B(H_\pi)$ is a boundary representation for $\cS$. Then the following are equivalent.
\begin{enumerate}
\item\label{condition:no_cpt_pi} $\pi(C^*(\cS)) \cap K(H_\pi) = \{0\}$.
\item\label{condition:all_compressions_cof} $H_\pi$ is infinite-dimensional, and for any cofinite-dimensional subspace $G$ of $H_\pi$, the compression map is completely isometric on $\pi(\cS)$.
\item\label{condition:one_proper_compression} There exists a proper closed subspace $G$ of $H_\pi$ such that the compression map is completely isometric on $\pi(\cS)$. 
\end{enumerate}
\end{lemma}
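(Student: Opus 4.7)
The plan is to view $\pi(\cS)$ as an operator system inside $B(H_\pi)$ with generated $C^*$-algebra $\pi(C^*(\cS))$, and to observe that the identity representation $\id$ of $\pi(C^*(\cS))$ on $H_\pi$ is itself a boundary representation for $\pi(\cS)$. Indeed, any UCP extension $\rho$ of $\id|_{\pi(\cS)}$ to $\pi(C^*(\cS))$ gives a UCP map $\rho \circ \pi$ on $C^*(\cS)$ that agrees with $\pi$ on $\cS$, so the unique extension property for $\pi$ forces $\rho \circ \pi = \pi$ and hence $\rho = \id$. This reduction lets me apply Lemma \ref{lem:detection_dichotomy} and Arveson's extension theorem directly inside $B(H_\pi)$.

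For (\ref{condition:no_cpt_pi}) $\Rightarrow$ (\ref{condition:all_compressions_cof}), I first note that $H_\pi$ must be infinite-dimensional, since otherwise $\pi(C^*(\cS))$ would be a nontrivial unital $*$-subalgebra of $B(H_\pi) = K(H_\pi)$ and already violate (\ref{condition:no_cpt_pi}). Given a cofinite-dimensional $G \subseteq H_\pi$, the hypothesis places $\id$ in the second alternative of Lemma \ref{lem:detection_dichotomy}, applied to the system $\pi(\cS) \subseteq B(H_\pi)$ with the representation $\id$ of $\pi(C^*(\cS))$, so there is a UCP map $\phi : P_G \pi(\cS)|_G \to B(H_\pi)$ sending $P_G s|_G \mapsto s$ for $s \in \pi(\cS)$. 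This UCP left inverse forces the compression to be completely isometric on $\pi(\cS)$. The implication (\ref{condition:all_compressions_cof}) $\Rightarrow$ (\ref{condition:one_proper_compression}) is immediate by choosing any codimension-one $G$.

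For (\ref{condition:one_proper_compression}) $\Rightarrow$ (\ref{condition:no_cpt_pi}), let $G \subsetneq H_\pi$ be proper and closed such that $c(s) := P_G s|_G$ is completely isometric on $\pi(\cS)$, and let $V : G \hookrightarrow H_\pi$ denote the inclusion. The inverse of $c$ on its image is UCP, and I extend it via Arveson's theorem to a UCP map $\Psi : B(G) \to B(H_\pi)$, so that $\rho(x) := \Psi(V^* x V)$ is UCP on $B(H_\pi)$ and agrees with $\id$ on $\pi(\cS)$. By the boundary property of $\id$ for $\pi(\cS)$ established above, $\Psi(V^* x V) = x$ for every $x \in \pi(C^*(\cS))$. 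If $\pi(C^*(\cS)) \cap K(H_\pi)$ were nonzero, irreducibility of $\pi$ would force $K(H_\pi) \subseteq \pi(C^*(\cS))$; then for any unit vector $e \in G^\perp$ (which exists because $G$ is proper), the rank-one projection $P = ee^*$ would lie in $\pi(C^*(\cS))$ yet satisfy $V^* P V = 0$, yielding $P = \Psi(0) = 0$, a contradiction. I expect the subtlest point to be the transfer of the unique extension property from $\pi$ (boundary for $\cS$) to $\id$ (boundary for $\pi(\cS)$), since it is what makes the same dichotomy available on both sides of the equivalences; once that is in place, each implication is short.
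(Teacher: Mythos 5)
Your proposal is correct and follows essentially the same route as the paper: both directions hinge on Lemma \ref{lem:detection_dichotomy} applied to $\pi(\cS) \subseteq B(H_\pi)$ for (\ref{condition:no_cpt_pi}) $\Rightarrow$ (\ref{condition:all_compressions_cof}), and on Arveson's extension theorem plus the unique extension property against a rank-one projection supported on $G^\perp$ for (\ref{condition:one_proper_compression}) $\Rightarrow$ (\ref{condition:no_cpt_pi}). The only cosmetic difference is that you make explicit up front that $\id$ is a boundary representation for $\pi(\cS)$, whereas the paper achieves the same effect by composing the extension with $\pi$ at the end ($\widetilde{\phi}\,q\,\pi$) and invoking the unique extension property of $\pi$ on $C^*(\cS)$ directly.
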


\begin{proof}
$\textbf{(\ref{condition:no_cpt_pi})} \Rightarrow \textbf{(\ref{condition:all_compressions_cof})}$: 
If $\pi(C^*(\cS)) \cap K(H_\pi) = \{0\}$, then  $H_\pi$ must be infinite-dimensional, as otherwise the unit is compact. 
Further, the identity representation on $\pi(C^*(\cS))$ is irreducible and annihilates $\pi(C^*(\cS)) \cap K(H_\pi)$, 
so Lemma \ref{lem:detection_dichotomy} shows any cofinite-dimensional compression map is completely isometric on $\pi(\cS)$.

$\textbf{(\ref{condition:all_compressions_cof})} \Rightarrow \textbf{(\ref{condition:one_proper_compression})}$: This is trivial.

$\textbf{(\ref{condition:one_proper_compression})} \Rightarrow \textbf{(\ref{condition:no_cpt_pi})}$: Suppose $G$ is a proper closed subspace of $H_\pi$ such that the compression map $q$ restricts to a complete isometry on $\pi(\cS)$. This implies there is a UCP map $\phi: P_G \pi(s)|_G \mapsto \pi(s)$, $s \in \cS$. Fix a unit vector $w \in G^\perp$ and let $p$ be the rank one projection onto $\bC w$, noting $q(p) = 0$. 

If $\pi(C^*(\cS))\cap K(H_\pi) \not= \{0\}$, then irreducibility of $\pi$ gives that $\pi(C^*(\cS))$ contains $K(H_\pi)$. Hence, there is some $a \in C^*(\cS)$ with $\pi(a) = p$ and consequently $q(\pi(a)) = 0$. On the other hand, by Arveson's extension theorem, there is a UCP extension $\widetilde{\phi}: B(G) \to B(H_\pi)$ of $\phi$, so $\widetilde{\phi} \, q \, \pi$ is a UCP extension of $\pi|_\cS$ to $C^*(\cS)$. Since $\pi(a) = p \not= 0$ but $\widetilde{\phi}\,q \,\pi(a) = \widetilde{\phi}(0) = 0$, this contradicts the fact that $\pi$ has the unique extension property.
\end{proof}

%%%%%%%%%%%%%%%%%%%%%%%%%%%%
\begin{example}
In this lemma, it is not sufficient to assume that $\pi$ is irreducible. 
Let $S$ be the unilateral shift on $\ell^2$, and set $\cS = \spn\{ I, S, S^* \}$.
Then $C^*(\cS)$ is the Toeplitz algebra, which contains all compact operators.
That is, the identity map is an irreducible representation whose range contains the compacts.
Nevertheless, the compression to any subspace of finite codimension is completely isometric on $\cS$.
Indeed, the compression to the complement of the first $n$ standard basis vectors is clearly completely isometric, and as $n$ increases, these spaces will almost subsume any finite-dimensional subspace.
\end{example}

We now finally have enough preparation to reach the main theorem.

%%%%%%%%%%%%%%%%%%%%%%%%%%%%
\begin{theorem}\label{thm:fc_full_equiv_general}
Let $\cS \subset B(H)$ be a separable operator system. Then the following are equivalent.
\begin{enumerate}
\item\label{item:fc_stuff} $\cS$ is fully compressed.

\item\label{item:classification_stuff} The identity representation on $C^*(\cS)$ is unitarily equivalent to $\bigoplus\limits_{\pi \in \Omega} \pi$, where $\Omega$ is some collection of unitarily inequivalent boundary representations such that no $\pi \in \Omega$ annihilates $C^*(\cS) \cap K(H)$.

\item\label{item:more_boundary_nonsense} $\cS$ is minimal, $C^*(\cS) = C^*_e(\cS)$, and the identity representation on $C^*(\cS)$ is unitarily equivalent to $\bigoplus\limits_{\pi \in \Xi} \pi$, where $\Xi$ is some collection of irreducible representations $\pi$ such that  $\pi(C^*(\cS)) \cap K(H_\pi) \ne \{0\}$.

\item\label{item:individual_classification} The identity representation on $C^*(\cS)$ is unitarily equivalent to $\bigoplus\limits_{\pi \in \Lambda} \pi$, where $\Lambda$ consists of the strongly peaking representations without multiplicity.
\end{enumerate}
\end{theorem}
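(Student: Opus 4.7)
Plan: I argue via the cycle $(4)\Rightarrow(2)\Rightarrow(3)\Rightarrow(4)$ together with $(4)\Rightarrow(1)$ and $(1)\Rightarrow(4)$. The principal tools are Lemmas~\ref{lem:peaking and compacts} and~\ref{lem:detection_dichotomy}, Corollaries~\ref{cor:peaking_projection} and~\ref{cor:envelope and minimal}, and the classical structure theorem for $C^*$-algebras of compact operators.

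Implication $(4)\Rightarrow(2)$ is immediate: each strongly peaking $\pi$ is boundary by Remark~\ref{remark:sp_imp_acc}, and Lemma~\ref{lem:peaking and compacts} supplies an ideal inside $C^*(\cS)\cap K(H)$ on which $\pi$ does not vanish. For $(2)\Rightarrow(3)$, the direct sum of boundary representations factors through $C^*_e(\cS)$, so $C^*(\cS)=C^*_e(\cS)$; moreover $\pi(J)=K(H_\pi)\neq 0$ for $J=C^*(\cS)\cap K(H)$ since $\pi(J)$ is a nonzero ideal in the simple algebra $K(H_\pi)$. For minimality, the key observation is that by unique extension from the ideal $J$ the irreducible representations of $J$ are exactly $\{\pi|_J:\pi\in\Omega\}$, pairwise inequivalent, so the structure theorem for $C^*$-algebras of compacts yields $J\cong\bigoplus_{\pi\in\Omega}K(H_\pi)$. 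The summand $J_0$ corresponding to $\pi_0\in\Omega$ is then a nonzero ideal of $C^*(\cS)$ annihilated by every other irreducible $\sigma$; since the Shilov ideal vanishes, $J_0$ is not contained in it, so the reasoning used in the proof of Corollary~\ref{cor:envelope and minimal} produces $S\in M_n(\cS)$ with $\|\pi_0(S)\|=\|S\|>\|S+M_n(J_0)\|$, establishing simultaneously that $\pi_0$ is strongly peaking and that compression of $\cS$ to any proper reducing subspace of $H$ is not completely isometric. For $(3)\Rightarrow(4)$, minimality forces $\Xi$ to be multiplicity-free (a repeated summand could be compressed out isometrically), Corollary~\ref{cor:envelope and minimal} applied to each $\pi\in\Xi$ (its compression-to-$H_\pi^\perp$ hypothesis from minimality, its $C^*$-envelope hypothesis by assumption) upgrades $\pi$ to strongly peaking, and Lemma~\ref{lem:peaking and compacts} provides the converse inclusion $\Lambda\subseteq\Xi$.

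For $(4)\Rightarrow(1)$, suppose for contradiction $G\subsetneq H$ has completely isometric compression. Because $H=\bigoplus_{\pi\in\Lambda}H_\pi$, some $H_{\pi_0}$ is not contained in $G$, so pick a unit vector $v_0\in H_{\pi_0}\setminus G$. Multiplicity-freeness of $\Lambda$ together with Corollary~\ref{cor:peaking_projection} produces a rank-one projection $P\in C^*(\cS)$ onto $\bC v_0\subseteq H$. Extending the inverse of the compression through Arveson's extension theorem yields a UCP map $\mu:C^*(\cS)\to B(H)$ with $\mu|_{\cS}=\id|_{\cS}$; the unique extension property of the boundary representation $\pi_0$ then forces $P_{H_{\pi_0}}\mu(a)P_{H_{\pi_0}}|_{H_{\pi_0}}=\pi_0(a)$ for every $a\in C^*(\cS)$. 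Specializing to $a=P$ gives $\langle\mu(P)v_0,v_0\rangle=1$. On the other hand, the compression of $P$ to $G$ is the rank-one operator $ww^*$ with $w=P_Gv_0$ of norm strictly less than $1$, so $\mu(P)\leq\|w\|^2I_H$ and $\langle\mu(P)v_0,v_0\rangle\leq\|w\|^2<1$, a contradiction.

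The main obstacle is $(1)\Rightarrow(4)$. Assume $\cS$ is fully compressed. A first step shows that $J=C^*(\cS)\cap K(H)$ acts nondegenerately on $H$: if $\overline{JH}=H_a\subsetneq H$ then $\id|_{H_a^\perp}$ annihilates $J$, and a variant of the second part of Lemma~\ref{lem:detection_dichotomy} (applicable to any representation annihilating $J$, since elements of $\cS$ collapsed by a cofinite-dimensional compression have finite rank and therefore lie in $J$) provides a UCP map $P_G s|_G\mapsto s|_{H_a^\perp}$ for every cofinite-dimensional $G\subseteq H$; combined with the trivial map recovering $s|_{H_a}$, one obtains a UCP inverse of the compression onto the proper subspace $H_a\oplus G'$, contradicting~(1). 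By the structure theorem, $J\cong\bigoplus_\alpha K(L_\alpha)$ with $H\cong\bigoplus L_\alpha\otimes M_\alpha$, and unique extension from the ideal $J$ upgrades each summand to an irreducible representation $\pi_\alpha$ of $C^*(\cS)$, yielding $\id\cong\bigoplus\pi_\alpha^{(m_\alpha)}$ with $m_\alpha=\dim M_\alpha$. Minimality forces $m_\alpha=1$ by a compress-out-a-copy argument, so $\id\cong\bigoplus\pi_\alpha$ is multiplicity-free with compacts in each summand. The most delicate remaining step is to conclude that each $\pi_\alpha$ is a boundary representation, equivalently that $C^*(\cS)=C^*_e(\cS)$; given this, Corollary~\ref{cor:envelope and minimal} immediately upgrades each $\pi_\alpha$ to strongly peaking and $(4)$ follows. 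My approach: if $\pi_{\alpha_0}|_\cS$ admitted a second UCP extension $\rho\neq\pi_{\alpha_0}$, then the block-diagonal UCP map $\sigma:=\rho\oplus\bigoplus_{\alpha\neq\alpha_0}\pi_\alpha$ would extend $\id|_{\cS}$ while differing from $\id$ on $C^*(\cS)$; passing to a maximal dilation of $\sigma$ and using full compressedness to rule out any genuine enlargement of $H$ should produce the required contradiction. This conversion of full compressedness (which by Example~\ref{ex:tight} is strictly stronger than minimality in exactly this respect) into the $C^*$-envelope condition is the heart of the proof.
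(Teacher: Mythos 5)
Most of your cycle is sound, and in two places you genuinely streamline the paper's argument: in $(4)\Rightarrow(1)$ you replace the appeal to the fact that a direct sum of representations with the unique extension property again has it by a direct contradiction with the unique extension property of the single summand $\pi_0$ (compressing the UCP extension $\mu$ to $H_{\pi_0}$), and the first three steps of your $(1)\Rightarrow(4)$ --- nondegeneracy of $J=C^*(\cS)\cap K(H)$ via the cofinite-codimension variant of Lemma~\ref{lem:detection_dichotomy}, the decomposition $\id\cong\bigoplus_\alpha\pi_\alpha^{(m_\alpha)}$ by unique extension from the ideal $J$, and multiplicity-freeness from minimality --- form a clean, self-contained substitute for the first half of the paper's $(1)\Rightarrow(2)$. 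The implications $(4)\Rightarrow(2)\Rightarrow(3)\Rightarrow(4)$ are also essentially correct and close to the paper's.

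The gap is exactly the step you flag as the heart of the matter: converting full compressedness into $C^*(\cS)=C^*_e(\cS)$, i.e.\ showing each $\pi_\alpha$ is a boundary representation. Your proposed mechanism --- form $\sigma=\rho\oplus\bigoplus_{\alpha\ne\alpha_0}\pi_\alpha$ from a second UCP extension $\rho$ of $\pi_{\alpha_0}|_\cS$, pass to a maximal dilation $\Pi$ on $K\supseteq H$, and ``use full compressedness to rule out any genuine enlargement of $H$'' --- cannot work as stated, because full compressedness constrains compressions of $\cS$ to proper subspaces of $H$ and says nothing about dilations: the maximal dilation satisfies $P_H\Pi(s)|_H=s$, which exhibits $\cS$ as a compression of $\Pi(\cS)$, the harmless direction, and there is no contradiction to extract from $K\supsetneq H$. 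Moreover, Example~\ref{ex:nonpeaking_scum} shows that everything you have established by the end of your step 3 (the compacts act nondegenerately, $\id$ is a multiplicity-free direct sum of irreducibles whose ranges contain compacts, $\cS$ is minimal) is consistent with $C^*(\cS)\ne C^*_e(\cS)$, so some genuinely new input is required. The paper supplies it via the Davidson--Kennedy theorem \cite{DavKen15} that the boundary representations completely norm $\cS$: taking a unit vector $v\in H_{\alpha_0}$ and $G=(\bC v)^\perp$, full compressedness yields a boundary representation $\rho$ and $S\in M_n(\cS)$ with $\|\rho(S)\|>\|P_G S|_G\|$; Lemma~\ref{lem:detection_dichotomy} then forces $\rho$ not to annihilate $J$, hence to be a subrepresentation of the identity, hence $\rho\cong\pi_\alpha$ for some $\alpha$, and $H_\alpha\subseteq G$ would contradict the displayed inequality unless $\alpha=\alpha_0$. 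Thus $\pi_{\alpha_0}$ is boundary. You need this existence theorem for boundary representations (or an equivalent) to close the argument; the dilation sketch does not.
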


\begin{proof} 
$\textbf{(\ref{item:fc_stuff})} \!\Rightarrow\! \textbf{(\ref{item:classification_stuff})}$: Let $\Omega$ consist of the unitary equivalence classes of all boundary representations $\pi$ of $\cS$ such that $\pi$ is a compression of $\text{id}_{C^*(\cS)}$. 
Then the associated subspaces $H_\pi \subseteq H$ are reducing and mutually orthogonal by \cite[Proposition 5.2.1]{DixmierBook}.

Suppose for contradiction that the $H_\pi$ do not span $H$. 
Fix a unit vector $v$ orthogonal to $\sum_{\pi\in\Omega} H_{\pi}$, and let $G = (\bC v)^\perp$. 
The compression map to $G$ cannot be completely isometric because $\cS$ is fully compressed. Since the direct sum of all boundary representations completely norms $\cS$,
there must be some boundary representation $\rho$ and some $S \in M_n(\cS)$ such that 
\[
 \|\rho(S)\| > \| P_G S|_G \| .
\]
Consequently, $\rho$ fails Lemma \ref{lem:detection_dichotomy} condition (\ref{item:detection_fun_no_compacts}), so condition (\ref{item:detection_fun_compacts}) of the same lemma shows $\rho$ does not annihilate $C^*(\cS) \cap K(H)$, and hence $\rho$ is unitarily equivalent to a subrepresentation of the identity. 
This implies that $\rho \in \Omega$, so $H_\rho \subseteq G$, which contradicts the above inequality. 
It follows that
\[
 \id_{C^*(\cS)} \cong \bigoplus_{\pi \in \Omega} \pi. 
\]

We may similarly apply Lemma \ref{lem:detection_dichotomy} to each $\pi \in \Omega$. If $\pi$ annihilates $C^*(\cS) \cap K(H)$, then given $v \in H_\pi$ and $G = (\bC v)^\perp \subset H$, there is a UCP map $P_G s|_G \mapsto \pi(s)$. For $\sigma \in \Omega$ with $\sigma \not\cong \pi$, $\sigma(s)$ is a compression of $P_G s|_G$, so the map $P_G s|_G \mapsto s$ is UCP. This contradicts the fact that $\cS$ is fully compressed.

$\textbf{(\ref{item:classification_stuff})} \!\Rightarrow\! \textbf{(\ref{item:more_boundary_nonsense})}$: 
Assume $\text{id}_{C^*(\cS)}  \cong \bigoplus\limits_{\pi \in \Omega} \pi$ for unitarily inequivalent boundary representations $\pi \in \Omega$ which do not annihilate $C^*(\cS) \cap K(H)$. This implies that $C^*(\cS) = C^*_e(\cS)$ and $C^*(\cS)$ is multiplicity free, so the (minimal) reducing subspaces of $C^*(\cS)$ are the $H_\pi$. Since there exist boundary representations that live on $C^*(\cS) \cap K(H)$, \cite[Theorem 7.2]{Arv11} implies that the quotient by $C^*(\cS) \cap K(H)$ is not completely isometric on $\cS$, and each $\pi \in \Omega$ is strongly peaking for $\cS$. It follows that no summand may be removed without decreasing the norm of some matrix over $\cS$, so $\cS$ is minimal. Finally, every $\pi \in \Omega$ has $\pi(C^*(\cS)) \cap K(H_\pi) \not= \{0\}$ by Lemma \ref{lem:peaking and compacts}.

$\textbf{(\ref{item:more_boundary_nonsense})} \!\Rightarrow\! \textbf{(\ref{item:individual_classification})}$:
Since $\cS$ is minimal, $C^*(\cS)$ must be multiplicity free.
By Lemma~\ref{lem:peaking and compacts}, each strongly peaking representation belongs to $\Xi$ exactly once up to unitary equivalence.
Suppose that $\pi \in \Xi$.
Since $\cS$ is minimal, $C^*(\cS)=C^*_e(\cS)$, and the range of $\pi$ contains the compacts, 
Corollary~\ref{cor:envelope and minimal} shows that $\pi$ must be strongly peaking.
Thus $\Xi = \Lambda$.

$\textbf{(\ref{item:individual_classification})} \!\Rightarrow\! \textbf{(\ref{item:fc_stuff})}$: Assume that $\id$ is the direct sum of inequivalent strongly peaking representations $\pi \in \Lambda$, but $G$ is a proper closed subspace of $H$ such that the compression to $G$ is a complete isometry on $\cS$.
We may suppose $G$ has codimension one and write it as $G = (\bC w)^\perp$ for some unit vector $w = (w_\pi)_{\pi \in \Lambda}$.

Select some $\pi \in \Lambda$ with $w_\pi \ne 0$, and let $v = \|w_\pi\|^{-1} w_\pi$.
By Corollary~\ref{cor:peaking_projection}, there is some $Q \in C^*(\cS)$ such that $\pi(Q) = vv^*$ and $\sigma(Q) = 0$ for $\sigma \in \Lambda \setminus \{\pi\}$. 
That is, 
\[
 Q = \id(Q) = \pi(Q) \oplus \sum_{\sigma \in \Lambda\setminus\{\pi\}} \sigma(Q) = vv^*
\]
is the projection of $H$ onto $\bC v$. By design, $v \not\in G$, and hence $\|P_G  Q|_G\| < 1 = \|Q\|$.

Since $s \mapsto P_G s|_G$ is completely isometric, the map $\phi: P_G s|_G \mapsto s$, $s \in \cS$ is UCP. Extend $\phi$ to a UCP map $\tilde\phi: B(G) \to B(H)$ by Arveson's extension theorem. If $q_G$ is the compression of $C^*(\cS)$ to $G$, then $\tilde\phi \, q_G$ is a UCP extension of $\id|_{\cS}$ to $C^*(\cS)$. This extension differs from $\id$ at $Q$, since $\| P_G Q|_G \| < \|Q\|$ implies that no UCP map, in particular $\widetilde{\phi}$, can send $P_G Q|_G$ to $Q$. Therefore, $\id$ fails to have the unique extension property. However, every strongly peaking representation is a boundary representation (see Remark \ref{remark:sp_imp_acc}) and hence has the unique extension property. Since $\id$ is the direct sum of such representations, it must have the unique extension property by \cite[Proposition 4.4]{Arv11}, which is a contradiction. Therefore, $\cS$ is fully compressed.
\end{proof}

We are now able to prove a uniqueness result for fully compressed separable operator systems.

%%%%%%%%%%%%%%%%%%%%%%%%%%%%
\begin{corollary}\label{cor:fc_uniqueness}
If two separable operator systems are fully compressed and completely order isomorphic, then any complete order isomorphism between them is a unitary equivalence.
\end{corollary}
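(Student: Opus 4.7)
The plan is to construct a single unitary $U : H \to K$ implementing any given complete order isomorphism $\varphi:\cS\to\cT$, where $\cS\subseteq B(H)$ and $\cT\subseteq B(K)$ are the two fully compressed operator systems. My first step would be to promote $\varphi$ to a $*$-isomorphism between the ambient $C^*$-algebras: the equivalence of conditions in Theorem \ref{thm:fc_full_equiv_general} gives $C^*(\cS) = C^*_e(\cS)$ and $C^*(\cT) = C^*_e(\cT)$, and so the defining universal property of the $C^*$-envelope, applied to the covers $(C^*(\cT),\varphi)$ of $\cS$ and $(C^*(\cS),\varphi^{-1})$ of $\cT$, yields a $*$-isomorphism $\tilde\varphi: C^*(\cS)\to C^*(\cT)$ extending $\varphi$. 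Once $U$ with $UaU^* = \tilde\varphi(a)$ for every $a\in C^*(\cS)$ is produced, specializing to $a\in\cS$ delivers the conclusion.

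Next, I would invoke Theorem \ref{thm:fc_full_equiv_general} to decompose
\[
  \id_{C^*(\cS)} \cong \bigoplus_{\pi\in\Lambda_\cS}\pi \qquad\text{and}\qquad \id_{C^*(\cT)} \cong \bigoplus_{\rho\in\Lambda_\cT}\rho,
\]
where $\Lambda_\cS$ and $\Lambda_\cT$ list, without repetition, the unitary equivalence classes of the strongly peaking representations of $\cS$ and $\cT$ respectively. Because the strongly peaking property in Definition \ref{def:Arv_strongpeak} is detected entirely by norms of matrices over the operator system, and $\tilde\varphi$ restricts to the complete isometry $\varphi$, the assignment $\pi\mapsto \pi\circ\tilde\varphi^{-1}$ is a bijection from $\Lambda_\cS$ onto $\Lambda_\cT$.

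The key step will then be to compare $\id_{C^*(\cS)}$ on $H$ with the representation $\tilde\varphi:C^*(\cS)\to B(K)$ obtained by composing $\tilde\varphi$ with the inclusion $C^*(\cT)\subseteq B(K)$. Pulling back the decomposition of $\id_{C^*(\cT)}$ through $\tilde\varphi$ exhibits this second representation as $\bigoplus_{\rho\in\Lambda_\cT}(\rho\circ\tilde\varphi)$, and by the bijection above this is again a multiplicity-free direct sum of one copy of each element of $\Lambda_\cS$, matching the decomposition of $\id_{C^*(\cS)}$ term by term. Selecting an intertwining unitary on each irreducible summand and assembling them via orthogonal direct sum should produce the desired unitary $U:H\to K$ with $UaU^* = \tilde\varphi(a)$ for all $a\in C^*(\cS)$.

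I expect the main subtlety to lie in this final assembly: to know that the per-summand unitaries can be pieced together into one unitary from $H$ to $K$, and not merely that the two representations of $C^*(\cS)$ are abstractly equivalent, I need both direct sums to be multiplicity free so that there is an unambiguous pairing of summands. This is exactly what Theorem \ref{thm:fc_full_equiv_general} delivers, and it is ultimately a direct consequence of $\cS$ and $\cT$ being fully compressed. The only remaining freedom on each irreducible summand is an overall unimodular scalar, which does not obstruct the construction.
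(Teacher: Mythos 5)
Your proposal is correct and follows essentially the same route as the paper: promote the complete order isomorphism to a $*$-isomorphism of the $C^*$-envelopes (which both systems generate, by Theorem \ref{thm:fc_full_equiv_general}), observe that strongly peaking representations correspond under this isomorphism, and then deduce unitary equivalence from the multiplicity-free decomposition of the identity into strongly peaking representations given by item (4) of that theorem. Your write-up simply spells out the final assembly of the intertwining unitary in more detail than the paper does.
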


\begin{proof}
If $\cS$ and $\widetilde{\cS}$ are completely order isomorphic, then they determine isomorphic $C^*$-envelopes $(C^*_e(\cS), \iota)$ and
$(C^*_e(\widetilde{\cS}), \tilde\iota)$. Specifically, there is a $*$-isomorphism $\tau: C^*_e(\cS) \to C^*_e(\widetilde{\cS})$ such that $\tau$ extends the given complete order isomorphism and $\tau\iota = \tilde\iota$. Theorem \ref{thm:fc_full_equiv_general} shows both operator systems live on their $C^*$-envelopes, hence strongly peaking representations of both operator systems are equivalent, in that a strongly peaking representation $\pi$ for $\cS$ corresponds to $\tilde\pi = \pi\tau^{-1}$. Unitary equivalence then follows from item (\ref{item:individual_classification}) of Theorem \ref{thm:fc_full_equiv_general}.
\end{proof}

Complete order isomorphism of operator systems is determined by the matrix state space, or the matrix range when $\cS = \cS_T$. Thus, we reach a positive answer to \cite[Question 4.8]{PasSha18}.

%%%%%%%%%%%%%%%%%%%%%%%%%%%%
\begin{corollary}\label{cor:fc_uniqueness_tuples}
Let $S = (S_1,\dots,S_d)$ and $T=(T_1,\dots,T_d)$ be two fully compressed $d$-tuples such that $\cW(S)=\cW(T)$.
Then $S$ and $T$ are unitarily equivalent.
\end{corollary}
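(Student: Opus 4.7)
The plan is to translate the hypothesis on matrix ranges into a statement about operator systems and then invoke Corollary~\ref{cor:fc_uniqueness} directly. First I would use the standard dictionary between matrix ranges and complete order structures to produce a unital complete order isomorphism $\phi: \cS_S \to \cS_T$ with $\phi(S_i) = T_i$ (and hence $\phi(S_i^*) = T_i^*$) for all $i$. Concretely, if $\psi : \cS_S \to M_n$ is UCP, then $(\psi(S_1), \ldots, \psi(S_d)) \in \cW_n(S) = \cW_n(T)$, so there is a UCP map on $\cS_T$ with matching values on the generators; the symmetric argument gives the inverse, and together they confirm that the generator-level assignment extends to a unital complete order isomorphism between $\cS_S$ and $\cS_T$.

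Next I would appeal to the remark following Definition~\ref{def:fc_enlightened}, which notes that for $\cS = \cS_T$ the operator system notion of ``fully compressed'' coincides with the tuple notion. Since $S$ and $T$ are assumed to be fully compressed $d$-tuples, both $\cS_S$ and $\cS_T$ are fully compressed separable operator systems, which puts us in the hypotheses of Corollary~\ref{cor:fc_uniqueness}.

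Finally, Corollary~\ref{cor:fc_uniqueness} promotes the complete order isomorphism $\phi$ to a unitary equivalence: there exists a unitary $U : H_S \to H_T$ with $U a U^* = \phi(a)$ for every $a \in \cS_S$. Evaluating on generators gives $U S_i U^* = T_i$ for all $i$, which is exactly unitary equivalence of the tuples $S$ and $T$.

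Because all of the substantive work has already been carried out in proving Theorem~\ref{thm:fc_full_equiv_general} and Corollary~\ref{cor:fc_uniqueness}, I do not expect a genuine obstacle here; this corollary is essentially a dictionary translation. The only minor point worth being careful about is the first step, namely verifying that $\cW(S) = \cW(T)$ genuinely produces a complete order isomorphism sending $S_i$ to $T_i$ rather than merely an isomorphism of some abstract operator systems. This is the standard identification already alluded to in the paragraph preceding the statement of the corollary.
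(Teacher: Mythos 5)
Your proposal is correct and follows essentially the same route as the paper: convert $\cW(S)=\cW(T)$ into a unital complete order isomorphism $\cS_S\to\cS_T$ sending $S_i\mapsto T_i$ (the paper cites \cite[Theorem 5.1]{DDSS} for this step), observe that the tuple and operator-system notions of fully compressed agree, and then invoke Corollary \ref{cor:fc_uniqueness}. The only detail the paper adds is the remark that fully compressed tuples live on finite-dimensional or separable Hilbert spaces, which is a minor point since $\cS_T$ is finite-dimensional and hence automatically a separable operator system.
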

\begin{proof}
Fully compressed $d$-tuples only exist on finite-dimensional or separable Hilbert spaces, and if $\cW(S) = \cW(T)$, the separable operator systems $\cS_S$ and $\cS_T$ are completely order isomorphic via the map which sends $S$ to $T$ \cite[Theorem 5.1]{DDSS}. Thus, we may apply Corollary \ref{cor:fc_uniqueness}.
\end{proof}

The fact that the compact operators play a key role in our theorem suggests that perhaps $C^*_e(\cS)$ must be GCR when $\cS$ is fully compressed.
This is not the case, as the following example shows.

%%%%%%%%%%%%%%%%%%%%%%%%%%%%
\begin{example} \label{ex:Cuntz-Toeplitz}
Let $L_1$ and $L_2$ be the left creation operators on the Fock space $F_2 = \ell^2(\bF_2^+)$.
That is, $\bF_2^+$ is the free semigroup on two generators, namely all words in the alphabet $\{1,2\}$ with the empty word $\emptyset$ as its unit.
Then $F_2$ has an orthonormal basis $\{ \xi_w : w \in \bF_2^+ \}$, and the creation operators are defined by $L_i \xi_w = \xi_{iw}$.
It is well known that $C^*(L_1,L_2)$ is the Cuntz-Toeplitz algebra $\cE_2$, which is a nontrivial extension of the compact operators by $\cO_2$.

Let $P_0 := \xi_\emptyset \xi_\emptyset^* = I - L_1L_1^* - L_2L_2^*$ and define $\cS := \spn\{ I, P_0, L_1, L_2, L_1^*, L_2^* \}$. 
Now $\cE_2$ contains the compact operators as its only proper ideal, and $\cE_2 /K(F_2) \cong \cO_2$ is NGCR, so Lemma \ref{lem:peaking and compacts} shows that no representation which factors through $\cO_2$ will be strongly peaking.
There is a unique representation which does not annihilate the compacts, namely $\id$.
Moreover, it is evident that $ \|P_0\| = 1$ and $\sigma(P_0)=0$ if $\sigma\not\cong\id$ is irreducible.
Thus $\id$ is strongly peaking and $\cS$ is fully compressed.
\end{example}

%%%%%%%%%%%%%%%%%%%%%%%%%%%%
\begin{example} \label{ex:Cuntz}
On the other hand, the Cuntz system of Example~\ref{ex:tight} has no fully compressed presentation.
This is because $C^*_e(\cS) = \cO_2$ has no strongly peaking representations,
or even any representations that contain any compact operators.
\end{example}

%%%%%%%%%%%%%%%%%%%%%%%%%%%%%%%%%%%%%%%
\section{Concrete Operator Systems}\label{sec:wrapup}
%%%%%%%%%%%%%%%%%%%%%%%%%%%%%%%%%%%%%%%

In this section, we clarify the relationship between fully compressed operator systems and GCR $C^*$-algebras, 
and we further discuss our results in the language of matrix convexity. 
While fully compressed operator systems need not live in GCR $C^*$-al\-ge\-bras, examination of the largest GCR ideal allows for sharper results. 
As described in \cite[\S 1.5]{ArvBook}, the image of an irreducible representation $\pi$ of a GCR $C^*$-algebra always contains the compact operators, 
and moreover $\pi$ is determined up to unitary equivalence by its kernel.

Let $\widehat{\cA}$ consist of unitary equivalence classes of irreducible representations of $\cA$.
There is a natural map from $\widehat{\cA}$ onto the space $\operatorname{Prim}(\cA)$ of primitive ideals 
given by $[\pi] \mapsto \ker \pi$. 
The Jacobson hull-kernel topology on $\operatorname{Prim}(\cA)$ thus induces a topology on $\widehat{\cA}$: 
if $X \subseteq \widehat{\cA}$, then 
\[ 
 [\pi] \in \ol{X} \quad\iff\quad  \bigcap_{[\rho] \in X} \ker \rho  \subseteq \ker \pi.
\]

When $\cA$ is not GCR, this topology can be rather uninteresting. 
However, when $\cA$ is GCR, the kernel map from $\cA$ to $\operatorname{Prim}(\cA)$ is a bijection \cite[Proposition~1.5.4]{ArvBook}, 
and by definition it is a homeomorphism.
In the GCR case, there is a Borel cross section from $\widehat{\cA}$ into $\Irr(\cA)$ which is unique up to the action of a Borel unitary valued function. 
This leads to a theory of direct integrals over the spectrum, and the fact that all (separable) representations are unitarily equivalent to a direct integral. 
The interested reader is referred to Arveson's treatment in \cite[\S 4.3]{ArvBook}.

%%%%%%%%%%%%%%%%%%%%%%%%%%%%
\begin{lemma} \label{lem:direct integral}
If $\sigma$ is a direct integral representation of  a GCR $C^*$-algebra $\cA$ with respect to a nonatomic probability measure $\mu$,
then there is a proper reducing subspace of $\sigma(\cA)$ such that the compression of $\sigma(\cA)$ to this subspace is (completely) isometric.
\end{lemma}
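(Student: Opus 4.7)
The plan is to use the direct integral structure to parameterize the reducing subspaces of $\sigma(\cA)$, and then build a proper one on which every operator norm on $\cA$ is still essentially attained. Since each $\pi_x$ is irreducible, the fiber commutant $\pi_x(\cA)'$ equals $\bC I_{H_x}$, so $\sigma(\cA)'$ is the diagonal algebra $L^\infty(\mu)$ acting by multiplication; its projections therefore correspond (modulo null sets) to measurable subsets $E \subseteq \widehat{\cA}$, and the associated reducing subspace $H_E := \int^\oplus_E H_x\, d\mu(x)$ is proper precisely when $\mu(E) < 1$. Because compression to such a reducing subspace is a $*$-homomorphism on $\sigma(\cA)$, complete isometry is equivalent to isometry, so the goal reduces to finding measurable $E$ with $\mu(E) < 1$ satisfying
\[
\esssup_{x \in E} \|\pi_x(a)\| = \|\sigma(a)\| \qforal a \in \cA.
\]

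Assuming $\cA$ is separable (as is standard in the direct integral theory of \cite[\S 4.3]{ArvBook}), fix a countable dense subset $\{a_k\} \subseteq \cA$. For each $(k,m) \in \bN^2$, the set
\[
U_{k,m} := \{x \in \widehat{\cA} : \|\pi_x(a_k)\| > \|\sigma(a_k)\| - 1/m\}
\]
is measurable with $\mu(U_{k,m}) > 0$, since $\|\sigma(a_k)\|$ is the essential supremum of $x \mapsto \|\pi_x(a_k)\|$. Using nonatomicity of $\mu$, select a measurable subset $V_{k,m} \subseteq U_{k,m}$ with $0 < \mu(V_{k,m}) < 2^{-(k+m+1)}$, and put $E := \bigcup_{k,m} V_{k,m}$.

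Then $\mu(E) \leq 1/2 < 1$, so $H_E$ is a proper reducing subspace. By construction $E \cap U_{k,m} \supseteq V_{k,m}$ has positive measure for every $(k,m)$, whence $\esssup_{x \in E} \|\pi_x(a_k)\| = \|\sigma(a_k)\|$ for all $k$. Since both sides of the displayed equation are $1$-Lipschitz functions of $a$ and agree on the dense set $\{a_k\}$, they agree on all of $\cA$, so the compression to $H_E$ is isometric, hence completely isometric. The main difficulty lies in the measure-theoretic construction of $E$: one must hit countably many positive-measure targets $U_{k,m}$ while keeping $\mu(E) < 1$, and this is exactly what nonatomicity of $\mu$ purchases, via the shrinking step from $U_{k,m}$ down to $V_{k,m}$.
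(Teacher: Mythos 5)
Your proof is correct and follows essentially the same approach as the paper's: both use nonatomicity to shrink each positive-measure near-supremum set to one of small measure so that the union still has measure less than $1$, then pass from a countable dense set to all of $\cA$. The only cosmetic differences are your double index $(k,m)$ in place of the paper's device of repeating each dense element infinitely often, and your $1$-Lipschitz density argument in place of the paper's restriction to $\cA_{sa}$.
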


\begin{proof}
We are given that $\sigma = \int^\oplus \zeta \,d\mu(\zeta)$.
Let $(a_i)_{i \in \bZ^+}$ be a countable dense sequence in $\cA_{sa}$, with each element repeated infinitely often.
For each $i$, the function $\zeta \to \| \zeta(a_i) \|$ is Borel and $\|\sigma(a_i)\| = \esssup \|\zeta(a_i)\|$ by \cite[p.89]{ArvBook}.
Thus 
\[
 A_i := \{ \zeta \in \widehat{\cA} : \|\zeta(a_i)\| > \|\sigma(a_i) \| - 2^{-i} \}
\] 
is a Borel set with $\mu(A_i)>0$.

Since $\mu$ has no atoms, we may select a Borel subset $B_i \subset A_i$ with $0 < \mu(B_i) < 3^{-i}$.
Set $B := \bigcup_{i \in \bZ^+} B_i$, so that $0 < \mu(B) < 1/2$. 
Since each element of $(a_i)_{i \in \bZ^+}$ appears infinitely many times, that is $a_i = a_{i_k}$ for $i_k \to +\infty$, each function $\|\zeta(a_i)\|$ has essential supremum over $B$ of $\|\sigma(a_i)\|$. 
The decomposition $\widehat{\cA} = B \cup B^c$ splits $\sigma \cong \sigma_1 \oplus \sigma_2$ into a direct sum of two proper subrepresentations. 
By construction, the map $\sigma(a) \mapsto \sigma_1(a)$ is isometric on each $a_i$, and thus is isometric on $\cA_{sa}$, whence also on $\cA$.
This is a $*$-monomorphism, so it is also completely isometric.
\end{proof}

With a bit more care, one can make the map $\sigma(a) \mapsto \sigma_2(a)$ isometric as well, but we will not need this. We also note that $\cA$ need not be unital.

A unitary equivalence class $[\pi] \in \widehat{\cA}$ is called \textit{isolated} if $\{[\pi]\}$ is open in $\widehat{\cA}$. 
We caution the reader that even if $\cA$ is GCR, the topology need not be Hausdorff or even $T_1$. 
An isolated point $[\pi]$ may be such that $\{[\pi]\}$ is open but not closed, and this is not uncommon.

%%%%%%%%%%%%%%%%%%%%%%%%%%%%
\begin{lemma} \label{lem:open points}
Let $\cS$ be a separable operator system inside its $C^*$-envelope $\cA = C^*_e(\cS)$, and let $\pi \in \text{Irr}(\cA)$. 
Then $\pi$ is strongly peaking for $\cS$ if and only if $[\pi]$ is an isolated point of $\widehat{\cA}$.
\end{lemma}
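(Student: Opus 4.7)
The plan is to convert the statement ``$[\pi]$ is isolated'' into a statement about the ideal
\[
 J := \bigcap_{\substack{\sigma \in \Irr(\cA) \\ \sigma \not\cong \pi}} \ker \sigma,
\]
and then to exploit the fact that $\cA = C^*_e(\cS)$ has trivial Shilov boundary ideal. From the hull-kernel description of the topology on $\widehat{\cA}$ recalled just before the lemma, $\{[\pi]\}$ is open if and only if $J \not\subseteq \ker \pi$, that is, $\pi(J) \neq \{0\}$. Both implications will be reduced to this reformulation.

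The forward direction is almost immediate: if $\pi$ is strongly peaking, then Lemma~\ref{lem:peaking and compacts} yields $\pi(J) = K(H_\pi) \neq \{0\}$, so $[\pi]$ is isolated.

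For the reverse direction, suppose $[\pi]$ is isolated, so $J$ is a nonzero ideal of $\cA$. Because $\cA = C^*_e(\cS)$, the Shilov boundary ideal (the largest ideal $I$ such that $\cA \to \cA/I$ is completely isometric on $\cS$) is $\{0\}$, and hence the quotient map $\cA \to \cA/J$ fails to be completely isometric on $\cS$. Select $S \in M_n(\cS)$ with $\|S\| > \|S + M_n(J)\|$. By the definition of $J$, every $\sigma \in \Irr(\cA)$ with $\sigma \not\cong \pi$ vanishes on $J$, and since $\pi(J) \neq \{0\}$, no irreducible representation vanishing on $J$ can be unitarily equivalent to $\pi$; therefore
\[
 \|S + M_n(J)\| = \sup_{\substack{\sigma \in \Irr(\cA) \\ \sigma \not\cong \pi}} \|\sigma(S)\|.
\]
Combining with the standard identity $\|S\| = \sup_{[\sigma] \in \widehat{\cA}} \|\sigma(S)\| = \max\{\|\pi(S)\|, \|S + M_n(J)\|\}$, the strict inequality forces $\|\pi(S)\| = \|S\| > \sup_{\sigma \not\cong \pi} \|\sigma(S)\|$, so $\pi$ is strongly peaking.

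The main obstacle is guaranteeing that the witness matrix $S$ has entries in $\cS$ rather than merely in $\cA$. This is exactly where the $C^*$-envelope hypothesis is essential: the triviality of the Shilov boundary ideal in $C^*_e(\cS)$ is precisely what promotes the ideal-theoretic statement $\pi(J) \neq \{0\}$ into a peaking inequality over $M_n(\cS)$, and without this hypothesis the reverse direction can fail, as anticipated by Example~\ref{ex:tight}.
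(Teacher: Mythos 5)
Your proof is correct and follows essentially the same route as the paper: both directions reduce to the hull--kernel reformulation via the ideal $J$, with the forward direction supplied by Lemma~\ref{lem:peaking and compacts} (the paper invokes its consequence, Corollary~\ref{cor:peaking_projection}, to the same effect) and the reverse direction using triviality of the Shilov ideal to produce the witness $S \in M_n(\cS)$ and the identity $\|S\| = \max\{\|\pi(S)\|, \|S+M_n(J)\|\}$. Your observation that $\|S+M_n(J)\|$ actually \emph{equals} $\sup_{\sigma\not\cong\pi}\|\sigma(S)\|$ is a harmless sharpening of the inequality the paper uses.
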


\begin{proof}
First, suppose that $\pi$ is strongly peaking with respect to $\cS$. By Corollary~\ref{cor:peaking_projection}, there is an element $b \in\cA_{sa}$ such that $\|\pi(b)\|=1$ but $b \in \ker \sigma$ for any irreducible representation $\sigma \not\cong \pi$. By definition of the topology, $[\pi]$ does not belong to $\ol{\widehat{\cA} \setminus\{[\pi]\}}$.
That is, $\widehat{\cA} \setminus\{[\pi]\}$ is closed, so $\{[\pi]\}$ is open, i.e.\ $[\pi]$ is isolated.

Conversely, if $[\pi]$ is an isolated point, then  $\widehat{\cA} \setminus\{[\pi]\}$ is closed.
Hence $\pi$ does not vanish on $J:= \bigcap\limits_{[\sigma]\in \widehat{\cA} \setminus\{[\pi]\}} \ker\sigma$. 
Since $J$ is a nonzero ideal of $\cA= C^*_e(\cS)$, the quotient map by $J$ is not completely isometric on $\cS$. 
We conclude there is a matrix $\cS \in M_n(\cS)$ with $\|S\| > \|S + M_n(J) \|$. 
Any $\sigma\in\Irr(\cA)$ with $\sigma\not\cong\pi$ factors through $\cA/J$, so
\[
 \|S\| = \max \Big\{ \|\pi(S)\|, \!\!\sup_{\substack{\sigma\in\Irr(\cA)\\ \sigma\not\cong \pi}} \|\sigma(S)\| \Big\} 
 >  \|S + M_n(J) \| \ge\! \sup_{\substack{\sigma\in\Irr(\cA)\\ \sigma\not\cong \pi}} \|\sigma(S)\| .
\]
Therefore $\pi(S) = \|S\|$, and $\pi$ is strongly peaking for $\cS$.
\end{proof}

The lemma applies in particular to any separable unital $C^*$-algebra $\cA$, where we may take $\cS = \cA = C^*_e(\cS)$. 
In this case, one need only use elements of $\cA$ instead of matrices over $\cA$, since isometric $*$-homomorphisms are completely isometric. 

%%%%%%%%%%%%%%%%%%%%%%%%%%%%%
\begin{remark}
Once we know that $\pi$ is strongly peaking, Lemma~\ref{lem:peaking and compacts} applies to show that $\pi(\cA)$ contains $K(H_\pi)$, so $\pi$ lives on the maximal GCR ideal of $\cA$. In fact, there is an ideal $J$ of $\cA$ isomorphic to $K(H_\pi)$. If $\cA \subset B(H)$ is such that $\id$ is multiplicity free, then the concrete ideal $J$ is a summand of $\cA \cap K(H)$ that is unitarily equivalent to $K(H_\pi)$. This can be arranged, as every $C^*$-algebra admits a faithful multiplicity free representation.
\end{remark}

This remark has a converse.

%%%%%%%%%%%%%%%%%%%%%%%%%%%%
\begin{corollary} \label{cor:open points}
Let $\cS$ be a separable operator system inside its $C^*$-envelope $\cA = C^*_e(\cS)$, and let $\pi \in \text{Irr}(\cA)$. 
If there is an ideal $J$ of $\cA$ such that $J$ is isomorphic to $K(H_1)$ for some Hilbert space $H_1$ and $\pi|_J \ne 0$, then $\pi$ is strongly peaking. 
\end{corollary}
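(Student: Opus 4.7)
The plan is to apply Lemma~\ref{lem:open points}, so it suffices to show that $[\pi]$ is isolated in $\widehat{\cA}$. I will do this by identifying $\pi$ as the unique element of $\widehat{\cA}$ that does not annihilate $J$, exploiting the fact that $K(H_1)$ has only one irreducible representation up to unitary equivalence.

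First, I would verify that $\pi|_J$ is itself a nondegenerate, irreducible representation of $J$. Nondegeneracy is immediate from irreducibility of $\pi$: the closure of $\pi(J) H_\pi$ is a nonzero $\pi(\cA)$-invariant subspace, hence all of $H_\pi$. Irreducibility of $\pi|_J$ follows from the standard fact that, because $J$ is a self-adjoint ideal and $\pi|_J$ is nondegenerate, any projection commuting with $\pi(J)$ must also commute with $\pi(\cA)$, and therefore must be trivial.

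Next, since $J \cong K(H_1)$ has, up to unitary equivalence, a unique irreducible representation, any $\sigma \in \Irr(\cA)$ with $\sigma|_J \ne 0$ must satisfy $\sigma|_J \cong \pi|_J$. By \cite[Lemma I.9.14]{DavBook}, an irreducible representation of $J$ extends uniquely (up to unitary equivalence) to an irreducible representation of $\cA$, and so $\sigma \cong \pi$. Consequently every $[\sigma] \in \widehat{\cA} \setminus \{[\pi]\}$ satisfies $J \subseteq \ker \sigma$, whence
\[
J \ \subseteq \bigcap_{[\sigma] \in \widehat{\cA} \setminus \{[\pi]\}} \ker \sigma.
\]
Because $\pi|_J \ne 0$, $\pi$ does not vanish on this intersection. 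By the description of the hull-kernel topology given before Lemma~\ref{lem:direct integral}, this means $[\pi] \notin \overline{\widehat{\cA} \setminus \{[\pi]\}}$, so $\{[\pi]\}$ is open and $[\pi]$ is isolated. Lemma~\ref{lem:open points} then yields that $\pi$ is strongly peaking.

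I do not anticipate any real obstacle: the argument is essentially a repackaging, as a converse, of the reasoning used inside the proof of Lemma~\ref{lem:peaking and compacts}. The only mildly technical point is the verification that $\pi|_J$ is irreducible, which is a standard fact about irreducible representations restricted to $\ast$-ideals.
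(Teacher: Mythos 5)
Your proposal is correct and follows essentially the same route as the paper: show every irreducible $\sigma\not\cong\pi$ annihilates $J$ (via uniqueness of the irreducible representation of $K(H_1)$ and unique extension from the ideal), deduce that $[\pi]$ is isolated in $\widehat{\cA}$, and invoke Lemma~\ref{lem:open points}. The only difference is cosmetic: you spell out the nondegeneracy and irreducibility of $\pi|_J$, which the paper leaves implicit.
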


\begin{proof}
Let $\sigma \in \Irr(\cA)$. If $\sigma|_J \ne 0$, then because $K(H_1)$ has a unique irreducible representation, we have
\[ \sigma|_J \cong \id_{K(H_1)} \cong \pi|_J\]
and hence $\sigma\cong\pi$. The contrapositive gives that if $\sigma\not\cong\pi$, then $J \subseteq \ker\sigma$. Therefore,
\[ \{ [\sigma] \in \widehat{\cA}: \sigma \not\cong \pi\} = \{  [\sigma] \in \widehat{\cA}: J \subseteq \ker\sigma \} \]
is a closed subset of $\widehat{\cA}$. We have shown $[\pi]$ is an isolated point, and since $\cA = C^*_e(\cS)$, Lemma~\ref{lem:open points} implies $\pi$ is strongly peaking.
\end{proof}

Every $C^*$-algebra $\cA$ contains a largest GCR ideal $I$, as in \cite[\S 1.5]{ArvBook}. See also \cite[\S 4.3]{DixmierBook}, where GCR $C^*$-algebras are called postliminal. 
If $\pi$ is an irreducible representation of $\cA$ which does not annihilate $I$, then $\pi(\cA) \supseteq \pi(I) \supseteq K(H_\pi)$ contains all compact operators.
On the other hand, if $\ker \pi$ contains $I$, than $\pi$ factors through the NGCR (or antiliminal) $C^*$-algebra $\cA/I$, and $\pi(\cA)$ contains no nonzero compact operators.
A deep result of Glimm's \cite[Theorems 1 and 2]{Glimm61} characterizes GCR algebras as the type I $C^*$-algebras and NGCR algebras as having a very complicated representation theory.

Example \ref{ex:Cuntz-Toeplitz} shows there may be isolated points of $\widehat{C^*_e(\cS)}$ even if $C^*_e(\cS)$ is not GCR. 
However, if it is GCR, then we may connect fully compressed operator systems to minimal ones.
Indeed, it is enough that the maximal GCR ideal be essential, meaning that it has nontrivial intersection with every nonzero ideal.

%%%%%%%%%%%%%%%%%%%%%%%%%%%%
\begin{theorem}\label{thm:type I}
Let $\cS \subset B(H)$ be a separable operator system. Then the following are equivalent.
\begin{enumerate}
\item\label{item:fc_snooze} $\cS$ is fully compressed.
\item\label{item:isolated_list} $C^*(\cS) = C^*_e(\cS)$ and $\id$ is the direct sum of inequivalent irreducible representations $\pi_i$, where each class $[\pi_i]$ is isolated in $\widehat{C^*(\cS)}$.
\item\label{item:compact_min_cond} $\cS$ is minimal, $C^*(\cS) = C^*_e(\cS)$, and $C^*(\cS) \cap K(H)$ is essential.
\item\label{item:GCR_min_cond} $\cS$ is minimal, $C^*(\cS) = C^*_e(\cS)$, and the maximal GCR ideal $I$ is essential.
\end{enumerate}
\end{theorem}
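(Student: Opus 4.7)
My plan is to verify the four conditions in the pattern $(1) \Leftrightarrow (2)$ followed by $(1) \Rightarrow (3) \Rightarrow (4) \Rightarrow (1)$.

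The equivalence $(1) \Leftrightarrow (2)$ is obtained by translation. Theorem~\ref{thm:fc_full_equiv_general} already records that $\cS$ is fully compressed if and only if $C^*(\cS) = C^*_e(\cS)$ and $\id$ is a multiplicity-free direct sum of strongly peaking representations, while Lemma~\ref{lem:open points} identifies strongly peaking representations of $\cS \subseteq C^*_e(\cS)$ precisely with the isolated points of $\widehat{C^*_e(\cS)}$.

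For $(1) \Rightarrow (3)$, minimality and $C^*(\cS) = C^*_e(\cS)$ are already encoded in Theorem~\ref{thm:fc_full_equiv_general}. To see that $K := C^*(\cS) \cap K(H)$ is essential, it suffices to check that its annihilator in $C^*(\cS)$ is trivial. The third bullet of Lemma~\ref{lem:peaking and compacts} provides, for each strongly peaking $\pi$ appearing in $\id \cong \bigoplus \pi$, an ideal $J_\pi$ of $C^*(\cS)$ contained in $K$ that acts as $K(H_\pi)$ on the $\pi$-summand and is zero elsewhere. Any $b \in C^*(\cS)$ with $bK = 0$ must therefore annihilate every rank-one projection onto a direction in each $H_\pi$, so $b|_{H_\pi} = 0$ for all $\pi$ and hence $b = 0$. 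The implication $(3) \Rightarrow (4)$ is immediate: $K$ is a sub-$C^*$-algebra of $K(H)$, hence GCR, hence contained in $I$, and any ideal containing an essential ideal is essential.

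The substance of the theorem is $(4) \Rightarrow (1)$. The key algebraic observation is that when $I$ is essential, every central projection $P \in \id(I)''$ automatically lies in $\id(C^*(\cS))'$: for $a \in I$ and $b \in C^*(\cS)$, since $P$ is central in $\id(I)''$ and $ba \in I$,
\[
 (Pb - bP)a \;=\; P(ba) - b(Pa) \;=\; (ba)P - b(aP) \;=\; (ba)P - (ba)P \;=\; 0,
\]
and essentiality of $I$ forces $\overline{IH} = H$ (minimality rules out a nonzero complementary kernel, since the compression to $\overline{IH}$ is automatically a completely isometric $*$-homomorphism whenever $I$ has trivial annihilator), so $Pb = bP$. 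Thus reducing subspaces for $\id(I)$ arising from central projections of $\id(I)''$ are automatically reducing for $\id(C^*(\cS))$.

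Now $\id$ is multiplicity-free by minimality, and $\id|_I$ is a faithful essential representation of the GCR algebra $I$, which decomposes as a direct integral $\int^\oplus \zeta \, d\mu(\zeta)$ over $\widehat{I}$. If $\mu$ had a nonatomic part, Lemma~\ref{lem:direct integral} (applied to the nonatomic portion and combined with the identity on the atomic portion) would yield a proper reducing subspace $G$ of $\id|_I$ on which compression is completely isometric on $I$. The observation above makes $G$ reducing for $\id(C^*(\cS))$, so compression is a $*$-homomorphism on $C^*(\cS)$; it is injective because $P_G b|_G = 0$ gives $P_G(ba)|_G = (P_G b|_G)(P_G a|_G) = 0$ for each $a \in I$, hence $ba = 0$ by isometry on $I$, and finally $b = 0$ by essentiality of $I$. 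Completely isometric compression to a proper reducing subspace contradicts minimality, so $\mu$ must be purely atomic. Each atom then lifts uniquely (by \cite[Lemma I.9.14]{DavBook}) to an irreducible subrepresentation $\pi_i$ of $\id$ with $\pi_i(C^*(\cS)) \supseteq K(H_{\pi_i}) \neq \{0\}$, and Theorem~\ref{thm:fc_full_equiv_general}(3) closes the argument. I expect the promotion of reducing subspaces and isometry from $I$ to $C^*(\cS)$---hinging entirely on the essentiality observation---to be the main technical hurdle.
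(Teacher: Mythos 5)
Your proposal is correct and follows essentially the same route as the paper: $(1)\Leftrightarrow(2)$ via Lemma~\ref{lem:open points} and Theorem~\ref{thm:fc_full_equiv_general}, $(1)\Rightarrow(3)$ via the third bullet of Lemma~\ref{lem:peaking and compacts}, and $(4)\Rightarrow(1)$ by showing $\id|_I$ is nondegenerate and faithful, ruling out a nonatomic part of the direct integral with Lemma~\ref{lem:direct integral}, and feeding the resulting atomic decomposition back into the main theorem. Your explicit verification that projections commuting with $\id(I)$ commute with $\id(C^*(\cS))$, and your use of condition (3) of Theorem~\ref{thm:fc_full_equiv_general} rather than Corollary~\ref{cor:envelope and minimal}, merely make explicit steps the paper leaves terse.
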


\begin{proof} 
By Lemma \ref{lem:open points}, (\ref{item:isolated_list}) is equivalent to condition (\ref{item:individual_classification}) of Theorem \ref{thm:fc_full_equiv_general}, so that theorem shows that (\ref{item:fc_snooze}) and (\ref{item:isolated_list}) are equivalent.
Moreover they clearly imply
that $\cS$ is minimal and $C^*(\cS) = C^*_e(\cS)$. 
Since $\id$ is the direct sum of inequivalent strongly peaking representations $\pi_i$, Lemma~\ref{lem:peaking and compacts} shows that each $\pi_i$
is induced by its restriction to an irreducible summand of $C^*(\cS)\cap K(H)$ isomorphic to $K(H_{\pi_i})$. The $H_{\pi_i}$ span $H$, so it follows that for any nonzero element $a \in C^*(\cS)$, there exist some $i$ and some $p \in K(H_{\pi_i}) \subseteq C^*(\cS) \cap K(H)$ such that $pa \not= 0$. If $a$ belongs to some nonzero ideal $J$, then $pa \in J \cap (C^*(\cS) \cap K(H))$, so $C^*(\cS) \cap K(H)$ is an essential ideal. That is, (\ref{item:compact_min_cond}) holds. Now, (\ref{item:compact_min_cond}) trivially implies (\ref{item:GCR_min_cond}) since $I$ contains $C^*(\cS)\cap K(H)$.

Finally, assume (\ref{item:GCR_min_cond}).
We first show that $\id$ is induced by its restriction to $I$.
By \cite[Lemma 2.11.1]{DixmierBook}, $\id \cong \sigma_a \oplus \sigma_s$ where $\sigma_a$ is induced by $I$ and $\sigma_s$ annihilates $I$.
Thus $\sigma_a$ is faithful on $I$.
Since $I$ is essential, $\sigma_a$ is also faithful on $C^*(\cS)$.
Minimality of $\cS$ shows that the representation $\sigma_s$ must be vacuous, and $\id\cong\sigma_a$ is induced by its restriction to $I$.

Since $I$ is GCR, every separable representation of $I$, in particular the identity representation, is given by a direct integral over its spectrum. If there is a nonatomic part $\sigma$, then by Lemma~\ref{lem:direct integral}, there is a proper summand $\sigma_1$ of $\sigma$ which is completely isometric. It follows that the direct sum of the atoms and $\sigma_1$ is faithful on $I$. This is a compression of $I$ to a reducing subspace, and since $I$ is an essential ideal, the compression to the same reducing subspace of $C^*(\cS)$ is also faithful. This implies $\cS$ is not minimal, so the direct integral for $\id|_I$ must actually be a direct sum. That is, the identity on $C^*(\cS)$ is a direct sum of irreducible representations $\pi_i$ which are nonzero on $I$. 

Since $I$ is GCR and $\pi_i(I) \not= \{0\}$, the range of $\pi_i$ includes the compact operators. Noting that $C^*(\cS) = C^*_e(\cS)$ and $\cS$ is minimal, we conclude from Corollary~\ref{cor:envelope and minimal} that each irreducible summand $\pi_i$ of $\id$ is strongly peaking. 
Minimality of $\cS$ shows there is no multiplicity in the direct sum, so by Theorem \ref{thm:fc_full_equiv_general}, $\cS$ is fully compressed.
\end{proof}

The following result provides a necessary and sufficient condition to produce a fully compressed presentation of $\cS$, answering \cite[Question 4.19]{PasSha18}. Later, we will also consider some special cases in the language of matrix convexity.

%%%%%%%%%%%%%%%%%%%%%%%%%%%%
\begin{corollary}\label{cor:exists_fc}
Let $\cS$ be a separable abstract operator system. Then the following are equivalent.
\begin{enumerate}
\item\label{item:exists_fc_basicb} There exists a concrete presentation of $\cS$ which is fully compressed.
\item\label{item:exists_fc_density} The set of isolated points of $\widehat{C^*_e(\cS)}$ is dense.
\item\label{item:exists_fc_concrete} There exists a concrete presentation of $\cS$ such that $\cS$ is completely normed by irreducible representations of $C^*(\cS)$ which are strongly peaking for $\cS$.
\end{enumerate}
\end{corollary}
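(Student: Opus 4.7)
The plan is to prove $(\ref{item:exists_fc_basicb}) \Rightarrow (\ref{item:exists_fc_density}) \Rightarrow (\ref{item:exists_fc_concrete}) \Rightarrow (\ref{item:exists_fc_basicb})$, leaning on Theorem~\ref{thm:fc_full_equiv_general} and Lemma~\ref{lem:open points} throughout. The former connects fully compressed presentations with direct sum decompositions of $\id$ into inequivalent strongly peaking boundary representations, while the latter identifies the strongly peaking representations of $C^*_e(\cS)$ with the isolated points of $\widehat{C^*_e(\cS)}$.

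For $(\ref{item:exists_fc_basicb}) \Rightarrow (\ref{item:exists_fc_density})$, given a fully compressed concrete presentation, item~(\ref{item:individual_classification}) of Theorem~\ref{thm:fc_full_equiv_general} supplies $C^*(\cS) = C^*_e(\cS)$ and $\id \cong \bigoplus_{\pi \in \Lambda} \pi$ with $\Lambda$ a set of representatives of the strongly peaking classes. By Lemma~\ref{lem:open points}, each $[\pi] \in \Lambda$ is isolated in $\widehat{C^*_e(\cS)}$. Any nonempty open subset of $\widehat{C^*_e(\cS)}$ has the form $\{[\sigma] : J \not\subseteq \ker \sigma\}$ for some nonzero ideal $J$, and since $\id$ is faithful some $\pi \in \Lambda$ must fail to annihilate $J$, placing the isolated point $[\pi]$ in the open set. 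Hence the isolated points are dense.

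For $(\ref{item:exists_fc_density}) \Rightarrow (\ref{item:exists_fc_concrete})$, pick a representative $\tilde\pi$ of each isolated class; by Lemma~\ref{lem:open points} each is strongly peaking for $j(\cS) \subset C^*_e(\cS)$. The direct sum $\tilde\rho := \bigoplus \tilde\pi$ has kernel $K := \bigcap \ker \tilde\pi$, and the closed set $\{[\sigma] : K \subseteq \ker \sigma\}$ contains the dense collection of isolated classes and hence equals $\widehat{C^*_e(\cS)}$. Triviality of the Jacobson radical of a $C^*$-algebra forces $K = 0$, so $\tilde\rho \circ j$ furnishes a concrete presentation of $\cS$ in which the images of the $\tilde\pi$ are strongly peaking representations of $C^*(\tilde\rho(j(\cS))) \cong C^*_e(\cS)$ whose direct sum is the identity and which therefore completely norm $\cS$.

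For $(\ref{item:exists_fc_concrete}) \Rightarrow (\ref{item:exists_fc_basicb})$, start with the given presentation, let $\Lambda$ be a set of representatives of strongly peaking classes that completely norm $\cS$, and form $\rho := \bigoplus_{\pi \in \Lambda} \pi$. Each $\pi$ is a boundary representation (Remark~\ref{remark:sp_imp_acc}), so $\rho$ factors through $C^*_e(\cS)$; since $\rho$ is completely isometric on $\cS$, the induced $*$-homomorphism $\tilde\rho : C^*_e(\cS) \to C^*(\rho(\cS))$ has trivial kernel and is a $*$-isomorphism, whence $C^*(\rho(\cS)) = C^*_e(\rho(\cS))$. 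In the new presentation $\id$ decomposes as the direct sum of the inequivalent boundary representations $\tilde\pi$, each of which contains $K(H_{\tilde\pi})$ by Lemma~\ref{lem:peaking and compacts} and therefore does not annihilate $C^*(\rho(\cS)) \cap K(\bigoplus H_{\tilde\pi})$. Theorem~\ref{thm:fc_full_equiv_general}\,(\ref{item:classification_stuff}) then shows $\rho(\cS)$ is fully compressed. The main bookkeeping obstacle is tracking how strongly peaking representations and the enveloping $C^*$-algebra transform when we change concrete presentations, but the $C^*$-envelope's universality handles this transparently.
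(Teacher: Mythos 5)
Your proposal is correct and follows essentially the same route as the paper: the same cycle of implications, driven by Theorem~\ref{thm:fc_full_equiv_general}, Lemma~\ref{lem:open points}, and Lemma~\ref{lem:peaking and compacts} (the paper routes $(\ref{item:exists_fc_basicb})\Rightarrow(\ref{item:exists_fc_density})$ through Theorem~\ref{thm:type I}, but that is itself just Theorem~\ref{thm:fc_full_equiv_general} combined with Lemma~\ref{lem:open points}). Your extra care in spelling out the density argument via primitive ideals and the faithfulness of the direct sum of isolated-point representatives only makes explicit what the paper leaves implicit.
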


\begin{proof}
\textbf{(\ref{item:exists_fc_basicb})} $\!\Rightarrow\!$ \textbf{(\ref{item:exists_fc_density})}: 
Theorem \ref{thm:type I} shows that if a concrete presentation of $\cS$ is fully compressed, then $C^*(\cS) = C^*_e(\cS)$ 
and the identity is the direct sum of representatives of isolated points in $\widehat{C^*_e(\cS)}$. 
In particular, the intersection of their kernels is $\{0\}$, which means that they are dense.

\textbf{(\ref{item:exists_fc_density})} $\!\Rightarrow\!$ \textbf{(\ref{item:exists_fc_concrete})}: 
If the isolated points $\{[\pi_i] : i \in J\}$ are dense in $\widehat{C^*_e(\cS)}$, we may norm $C^*_e(\cS)$ with their direct sum.
Hence $s \mapsto \bigoplus \pi_i(s)$ is completely isometric on $\cS$ and defines a concrete presentation which generates the $C^*$-envelope. By Lemma \ref{lem:open points}, each $\pi_i$ is strongly peaking for $\cS$. 

\textbf{(\ref{item:exists_fc_concrete})} $\!\Rightarrow\!$ \textbf{(\ref{item:exists_fc_basicb})}: 
Suppose that there is a concrete presentation of $\cS$ such that $\cS$ is completely normed by irreducible representations $\pi_i$ of $C^*(\cS)$ which are strongly peaking for $\cS$. Then the direct sum presentation, without multiplicity, is a (possibly distinct) presentation in which the summands $\pi_i$ remain strongly peaking. By Theorem \ref{thm:fc_full_equiv_general}, this presentation is fully compressed.
 \end{proof}

These conditions are met whenever $C^*(\cS)$ has countable spectrum.

%%%%%%%%%%%%%%%%%%%%%%%%%%%%
\begin{corollary} \label{cor:countable_nice}
Let $\cS \subset B(H)$ be a separable operator system such that $\widehat{C^*(\cS)}$ is countable. Then there is a completely isometric image $\iota(\cS)$ which is fully compressed. If in addition $C^*(\cS) = C^*_e(\cS)$, we may write $\iota(\cS) = P_G \cS|_G$ for some reducing subspace $G \subseteq H$.
\end{corollary}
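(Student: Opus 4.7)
The plan is to reduce the first assertion to Corollary~\ref{cor:exists_fc}, whose condition~(\ref{item:exists_fc_density}) requires density of the isolated points in $\widehat{C^*_e(\cS)}$. The quotient $C^*(\cS)\twoheadrightarrow C^*_e(\cS)$ induces an injection $\widehat{C^*_e(\cS)}\hookrightarrow\widehat{C^*(\cS)}$, so $\widehat{C^*_e(\cS)}$ inherits countability from the hypothesis. Since a separable non-type-I $C^*$-algebra carries uncountably many inequivalent irreducible representations sharing a single kernel by Glimm's dichotomy~\cite[Theorems 1 and 2]{Glimm61}, countability of the spectrum forces $C^*_e(\cS)$ to be GCR.

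The heart of the argument is establishing that the isolated points of $\widehat{C^*_e(\cS)}$ are dense. Given a nonempty open $U\subseteq\widehat{C^*_e(\cS)}$, let $J$ be the corresponding ideal. Then $J$ is a separable GCR $C^*$-algebra whose spectrum is $U$, hence nonempty and countable, so $J$ admits a nonzero maximal CCR ideal $K$. Since $K$ is CCR, each kernel of an irreducible representation is maximal, making $\widehat{K}$ a $T_1$ space; it is also Baire via the standard fact that the primitive ideal space of any $C^*$-algebra is Baire. A countable $T_1$ Baire space necessarily has an isolated point, since otherwise every singleton is closed with empty interior, and the whole space is meager in itself. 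Because $\widehat{K}$ is open inside $\widehat{C^*_e(\cS)}$, such a point is also isolated in $\widehat{C^*_e(\cS)}$ and lies in $U$. Density follows, and Corollary~\ref{cor:exists_fc} then produces a fully compressed presentation $\iota(\cS)$.

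For the refinement, assume $C^*(\cS)=C^*_e(\cS)=:\cA$, and let $\{\pi_i\}_{i\in I}$ denote the strongly peaking representations of $\cA$, which by Lemma~\ref{lem:open points} correspond bijectively to the isolated points of $\widehat{\cA}$. Lemma~\ref{lem:peaking and compacts} provides for each $i$ an ideal $J_i=\bigcap_{\sigma\not\cong\pi_i}\ker\sigma$ on which $\pi_i$ is injective with image $K(H_{\pi_i})$, together with a decomposition $\id\cong\pi_i^{(n_i)}\oplus\tau_i$ in which $\tau_i$ annihilates $J_i$. Faithfulness of $\id$ on $\cA$ combined with $J_i\ne 0$ forces $n_i\geq 1$. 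Because the $\pi_i$ are pairwise inequivalent, I can select pairwise orthogonal subspaces $\widetilde{H}_{\pi_i}\subseteq H$, one copy of $H_{\pi_i}$ for each $i$, and set $G:=\bigoplus_i\widetilde{H}_{\pi_i}$. Then $G$ is reducing with $\id|_G\cong\bigoplus_i\pi_i$, and density of the isolated points makes this direct sum faithful on $\cA$. Consequently $P_G\cS|_G\cong\bigoplus_i\pi_i(\cS)$ is completely isometric to $\cS$ and is fully compressed by Theorem~\ref{thm:fc_full_equiv_general}(\ref{item:individual_classification}).

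I expect the main obstacle to lie in the density step. A countable $T_0$ Baire space can easily fail to have any isolated points, so the argument must first descend to a CCR ideal $K$, where the $T_1$ property of the spectrum is available, and only then can the Baire category theorem deliver the required isolated point.
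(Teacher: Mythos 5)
Your proof is correct, and while the overall skeleton matches the paper's (reduce to the $C^*$-envelope, establish density of isolated points in the spectrum, invoke Corollary~\ref{cor:exists_fc} together with Lemma~\ref{lem:open points}, and then use Lemma~\ref{lem:peaking and compacts} to realize the fully compressed presentation as a compression to a reducing subspace when $C^*(\cS)=C^*_e(\cS)$), you execute the two technical inputs differently. For GCR-ness the paper cites Jensen's lemma on scattered $C^*$-algebras, whereas you deduce it from Glimm's dichotomy; both are valid, and yours trades a specialized reference for a more widely known one. For density of isolated points the paper invokes \cite[Theorem 4.4.5]{DixmierBook} to produce a dense open locally compact Hausdorff subset of the spectrum and then applies Baire category there; you instead localize inside an arbitrary nonempty open set, pass to a nonzero CCR ideal of the corresponding GCR ideal (which exists by the composition-series structure of postliminal algebras), and use that the spectrum of a CCR algebra is $T_1$ and Baire to extract an isolated point. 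Your route is somewhat more self-contained and makes explicit why some separation axiom beyond $T_0$ is needed before Baire category can bite; the paper's route is shorter because Dixmier's theorem packages the Hausdorff piece in one step. Your handling of the reducing subspace $G$ in the second assertion is also a slightly more detailed version of the paper's one-line conclusion, correctly noting that faithfulness of $\bigoplus_i \pi_i$ follows from density of the isolated points and that Theorem~\ref{thm:fc_full_equiv_general}(\ref{item:individual_classification}) applies to the compressed presentation because it still generates the $C^*$-envelope.
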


\begin{proof}
If $C^*(\cS)$ has countable spectrum, then so does the quotient $C^*_e(\cS)$, so we need only consider the case when $C^*(\cS) = C^*_e(\cS)$. 
By \cite[Lemma 3.2]{Jensen77}, $C^*(\cS)$ is GCR. 
The statement and proof of \cite[Theorem 4.4.5]{DixmierBook} then show that there is a dense open subset $X$ of $\widehat{C^*(\cS)}$ which is locally compact and Hausdorff. Now, $X$ is also countable, so the Baire Category Theorem implies that the isolated points of $X$ are dense in $X$. Since $X$ is open and dense, its isolated points remain isolated and dense in $\widehat{C^*(\cS)} = \widehat{C^*_e(\cS)}$.
In light of Lemma \ref{lem:open points}, $\cS$ is completely normed by its strongly peaking representations. 
Finally, Lemma \ref{lem:peaking and compacts} shows each strongly peaking representation of $C^*_e(\cS)$ is a subrepresentation of the identity, 
so we may let $G$ be the direct sum of the corresponding subspaces of $H$.
\end{proof}

In the above corollary, one must assume $C^*(\cS) = C^*_e(\cS)$ to conclude existence of the subspace $G$. As in \cite[Example 3.22]{Pas19}, if $T = \bigoplus \frac{1}{n}$, there is no minimal compression of $\cS_T$, so there is certainly not a fully compressed one. Note that $C^*(\cS_T) \not= C^*_e(\cS_T)$ has many isolated points in its spectrum which are not strongly peaking for $\cS_T$, as well as one point which is not strongly peaking for $\cS_T$ even though it corresponds to a strongly peaking representation of the $C^*$-envelope $C(\{0, 1\})$. The corollary also generalizes \cite[Corollary 6.8]{DDSS}, noting that the nonsingularity assumption therein is designed so that a compression to some reducing subspace will generate the $C^*$-envelope. 

On the other hand, we note that even if $C^*(\cS) = C^*_e(\cS)$ is commutative, it is possible for the spectrum to be uncountable yet still have a dense collection of isolated points.

%%%%%%%%%%%%%%%%%%%%%%%%%%%%
\begin{example}
There exists a separable operator system $\cS \subset B(H)$ which is fully compressed, but such that $C^*_e(\cS)$ has uncountably many irreducible representations, all of which are boundary representations of $\cS$.

Let $\theta \in \mathbb{R} \setminus \mathbb{Q}$, and define
\[
 P := \{(1 - 1/n)e^{2 \pi i n \theta}: n \in \bZ^+ \}
 \qand
 X := \overline{P} = P \cup \mathbb{S}^1 .
\]
Then $P$ is exactly the set of isolated points of $X$. View $C(X)$ as an operator system $\cS$ with the concrete presentation $f \mapsto \diag\{ f(p) : p \in P\} $ in $B(\ell^2(P))$. 
Since $\cS$ is a commutative $C^*$-algebra, the boundary representations are the irreducible representations $\delta_x$ for $x \in X$.
The strongly peaking representations are point evaluations at isolated points, namely $\delta_p$ for $p \in P$. 
The concrete presentation given above is fully compressed in $B(\ell^2(P))$, yet there are uncountably many boundary representations. 
\end{example}
%%%%%%%%%%%%%%%%%%%%%%%%%%%%%

We next show that if $C^*(\cS)$ is assumed block diagonal, then $\cS$ is fully compressed if and only if it is minimal. 

%%%%%%%%%%%%%%%%%%%%%%%%%%%%
\begin{theorem}\label{thm:block_diagonal}
Suppose $\cS \subset B(H)$ is a separable operator system such that $C^*(\cS)$ is block diagonal, that is, $\text{id} \cong \bigoplus\limits_{i \in \Lambda} \psi_i$ for finite-dimensional, irreducible representations $\psi_i: C^*(\cS) \to M_{n_i}(\bC)$. Then the following are equivalent.
\begin{enumerate}
\item\label{item:bd_sep_min} $\cS$ is minimal.
\item\label{item:bd_sep_single} For each $i \in \Lambda$, the map $s \mapsto \bigoplus\limits_{j \in \Lambda \setminus \{i\}} \psi_j(s)$ is not completely isometric on $\cS$.
\item\label{item:bd_sep_sp} The $\psi_i$ are strongly peaking representations, enumerated once per unitary equivalence class.
\item\label{item:bd_sep_fc} $\cS$ is fully compressed.
\end{enumerate}
Moreover, in this case $C^*(\cS) = C^*_e(\cS)$.
\end{theorem}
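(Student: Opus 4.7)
The plan is to run the cycle \textbf{(1)} $\Rightarrow$ \textbf{(2)} $\Rightarrow$ \textbf{(3)} $\Rightarrow$ \textbf{(4)} $\Rightarrow$ \textbf{(1)}, with the moreover clause $C^*(\cS)=C^*_e(\cS)$ extracted from Theorem~\ref{thm:fc_full_equiv_general}. Three of the four steps are short: \textbf{(4)} $\Rightarrow$ \textbf{(1)} is immediate since reducing subspaces are a special case of closed subspaces; \textbf{(1)} $\Rightarrow$ \textbf{(2)} follows by applying the minimality hypothesis to the proper reducing subspace $G_i := \bigoplus_{j \ne i} H_{\psi_j}$, under which the compression becomes $s \mapsto \bigoplus_{j \ne i}\psi_j(s)$; and \textbf{(3)} $\Rightarrow$ \textbf{(4)} is precisely item~(\ref{item:individual_classification}) of Theorem~\ref{thm:fc_full_equiv_general}, which also delivers $C^*(\cS)=C^*_e(\cS)$.

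The substance is \textbf{(2)} $\Rightarrow$ \textbf{(3)}, which I would break into inequivalence and the strong peaking property. Inequivalence is quick: if $\psi_i \cong \psi_{i'}$ for $i \ne i'$, then $\bigoplus_{j \ne i}\psi_j$ still contains a copy of $\psi_i$, so its norm on every $M_n(\cS)$ matches that of $\id \cong \bigoplus_j \psi_j$, contradicting \textbf{(2)}. For strong peaking, fix $i$ and use \textbf{(2)} to choose $S_i \in M_n(\cS)$ normalized so that $\|\psi_i(S_i)\|=1$ and $r_i := \sup_{j \ne i}\|\psi_j(S_i)\| < 1$. The key object is the ideal $I_i := \bigcap_{j \ne i}\ker\psi_j$. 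Since $\bigoplus_j \psi_j$ is faithful, $\ker\psi_i \cap I_i = \{0\}$, and so $\psi_i|_{I_i}$ is injective with image an ideal of the simple algebra $\psi_i(C^*(\cS)) = M_{n_i}$; thus $I_i$ is either $\{0\}$ or isomorphic to $M_{n_i}$.

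To rule out $I_i = \{0\}$ and to control all irreducible representations of $C^*(\cS)$, I would apply functional calculus to $S_i^*S_i$ with a continuous cutoff $f_i$ that vanishes on $[0,r_i^2]$ and equals $1$ at the point $1$; the element $A_i := f_i(S_i^*S_i) \in M_n(C^*(\cS))$ then satisfies $\psi_j(A_i) = 0$ for $j \ne i$ while $\|\psi_i(A_i)\|=1$, forcing the matrix entries of $A_i$ to lie in $I_i$ and hence $I_i \cong M_{n_i}$. Since $I_i$ is a full matrix algebra, it has a unique irreducible representation up to equivalence, so any irreducible $\sigma \not\cong \psi_i$ of $C^*(\cS)$ must annihilate $I_i$: otherwise $\sigma|_{I_i} \cong \psi_i|_{I_i}$ and the unique extension property \cite[Lemma I.9.14]{DavBook} would force $\sigma \cong \psi_i$. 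Thus $\sigma(A_i) = 0$, and by the cutoff the spectrum of $\sigma(S_i)^*\sigma(S_i)$ lies in $[0,r_i^2]$, yielding $\|\sigma(S_i)\| \le r_i < 1 = \|\psi_i(S_i)\|$, so $\psi_i$ is strongly peaking.

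The main obstacle I anticipate is the mismatch between the type of witness supplied by minimality and the type demanded by the definition of strongly peaking: minimality gives $S_i \in M_n(\cS)$, but the natural functional calculus element $A_i$ lives only in $M_n(C^*(\cS))$. The resolution is to use $A_i$ purely as a tool to pin down the ideal-theoretic fact $I_i \cong M_{n_i}$ together with the vanishing of $\sigma|_{I_i}$, and then to pull that information back via spectral theory to a bound on $\|\sigma(S_i)\|$, so that the original $S_i$ in $M_n(\cS)$ serves as the witness required by Definition~\ref{def:Arv_strongpeak}.
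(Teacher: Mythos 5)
Your proposal is correct, and the key implication \textbf{(2)} $\Rightarrow$ \textbf{(3)} is argued by a genuinely different route than the paper's. The paper takes the witness $S$ from \textbf{(2)}, produces a pure matrix state via \cite[Lemma 3.3]{DavKen15}, uses \cite[Theorem B]{Far00} and \cite[Theorem 4.6]{WW99} to realize it as a limit of compressions of the $\psi_{j_k}$, forces $j_k = i$, and then dilates through pure matrix states to conclude that each $\psi_i$ is a boundary representation (the dilation terminates by finite-dimensionality); this yields $C^*(\cS)=C^*_e(\cS)$ first, after which Lemma \ref{lem:detection_dichotomy} and Corollary \ref{cor:envelope and minimal} deliver strong peaking. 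You instead verify Definition \ref{def:Arv_strongpeak} directly: the cutoff $A_i=f_i(S_i^*S_i)$ forces $I_i:=\bigcap_{j\ne i}\ker\psi_j$ to be a nonzero ideal mapped by the injective $\psi_i|_{I_i}$ onto a nonzero ideal of the \emph{simple} algebra $M_{n_i}$, so $I_i\cong M_{n_i}$ has a unique irreducible representation and every irreducible $\sigma\not\cong\psi_i$ kills $A_i$, whence $\|\sigma(S_i)\|\le r_i$. This is essentially the technique of Lemma \ref{lem:peaking and compacts} and Corollary \ref{cor:envelope and minimal} run in the block-diagonal setting, and it correctly exploits finite-dimensionality exactly where it is needed: for an infinite-dimensional summand the ideal $I_i$ could be a unitization of the compacts (as in Example \ref{ex:nonpeaking_scum}), which has a second irreducible representation, and the argument would fail --- so your proof does not overreach. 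What your route buys is a shorter, self-contained argument that bypasses the dilation machinery and the $C^*$-envelope entirely, obtaining $C^*(\cS)=C^*_e(\cS)$ only afterwards from Theorem \ref{thm:fc_full_equiv_general}; what the paper's route buys is the intermediate fact that each $\psi_i$ is a boundary representation by an argument that fits the dilation-theoretic framework used elsewhere. Two small points to tidy: for \textbf{(3)} $\Rightarrow$ \textbf{(4)} via item (\ref{item:individual_classification}) of Theorem \ref{thm:fc_full_equiv_general}, you should note that the $\psi_i$ exhaust \emph{all} strongly peaking representations up to equivalence --- immediate from Remark \ref{remark:sp_imp_acc}, since $\bigoplus_i\psi_i=\id$ completely norms $\cS$; and your cutoff $f_i$ should be taken strictly positive on $(r_i^2,1]$ (e.g.\ the piecewise-linear choice from Lemma \ref{lem:peaking and compacts}) so that $\sigma(A_i)=0$ really pins $\operatorname{spec}(\sigma(S_i)^*\sigma(S_i))$ inside $[0,r_i^2]$.
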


\begin{proof}
The implications (\ref{item:bd_sep_fc}) $\Rightarrow$ (\ref{item:bd_sep_min}) $\Leftrightarrow$ (\ref{item:bd_sep_single}) are trivial.
By Theorem \ref{thm:fc_full_equiv_general},  (\ref{item:bd_sep_sp}) $\Leftrightarrow$ (\ref{item:bd_sep_fc}). 
In particular, since it is possible to completely norm $\cS$ using finite-dimensional irreducible representations, all the strongly peaking representations are finite-dimensional. 

$\textbf{(\ref{item:bd_sep_single})} \!\Rightarrow\! \textbf{(\ref{item:bd_sep_sp})}$: Write $H \cong \bigoplus\limits_{j \in \Lambda} H_j$, where $\psi_j$ acts on the finite-dimensional space $H_j$, and fix $i \in \Lambda$. 
Since the map $s \mapsto \bigoplus\limits_{j \in \Lambda \setminus \{i\}} \psi_j(s)$ is not completely isometric, there is a matrix $S \in M_n(\cS)$ for which $\|S\| = \| \psi_i (S) \| > \sup\limits_{j \not= i} \| \psi_j(S) \|$. From \cite[Lemma 3.3]{DavKen15}, there is a pure matrix state $\phi: \cS \to M_n(\bC)$ such that $\| \phi(S)\| > \sup\limits_{j \not= i} \| \psi_j(S) \|$. Moreover, \cite[Theorem B]{Far00} and \cite[Theorem 4.6]{WW99} show there exist isometries $V_k: \bC^n \to H_{j_k}$ such that 
\[ \phi(s) = \lim\limits_{k \to \infty} V_k^* \psi_{j_k}(s) V_k \qforal s \in \cS. \]

If $j_k \not= i$ for infinitely many $k$, we reach a contradiction by plugging in the matrix $S$, so we may assume $j_k = i$ for all $k$. Since $\bC^n$ and $H_{i}$ are finite-dimensional, the $V_k$ have a norm convergent subsequence, and hence $\phi$ is a compression of $\psi_i$. Setting $\phi_0 := \phi$, we may form a dilation $\phi_0 \prec \phi_1 \prec \ldots$ through pure matrix states to reach an accessible boundary representation by \cite[Remark 2.5 and Lemma 3.3]{DavKen15}, and the same argument shows each $\phi_m$ is a compression of $\psi_i$. Since $\psi_i$ acts on a finite-dimensional space, this dilation must terminate, so the largest $\phi_m$ is pure and maximal. Maximality of $\phi_m$ and irreducibility of $\psi_i$ imply that $\psi_i \cong \phi_m$, so $\psi_i$ is boundary and consequently $C^*(\cS) = C^*_e(\cS)$.  

The compression to the cofinite-dimensional subspace $G = H_i^\perp$  is not completely isometric, so the map $P_G s|_G \mapsto \psi_i(s)$ is not UCP. 
By Lemma \ref{lem:detection_dichotomy},  $\psi_i$ does not annihilate $C^*(\cS) \cap K(H)$. 
Since $\psi_i$ factors through the $C^*$-envelope, Corollary~\ref{cor:envelope and minimal} also shows $\psi_i$ is strongly peaking.
By definition of strongly peaking, since the $\psi_i$ are sufficient to completely norm $\cS$, every strongly peaking representation must be unitarily equivalent to some $\psi_i$. The assumptions directly rule out multiplicity, so $\cS$ is fully compressed by Theorem \ref{thm:fc_full_equiv_general}.
\end{proof}

%%%%%%%%%%%%%%%%%%%%%%%%%%%%
\begin{corollary}
If two separable operator systems are minimal, completely order isomorphic, and block diagonal, then any complete order isomorphism between them is a unitary equivalence. Similarly, if $T \in B(H_1)^d$ and $S \in B(H_2)^d$ are block diagonal, and both $S$ and $T$ are minimal with $\cW(T) = \cW(S)$, then $T \cong S$.
\end{corollary}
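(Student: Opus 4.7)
The plan is to reduce this corollary to the already-established uniqueness results for fully compressed systems and tuples by invoking Theorem \ref{thm:block_diagonal} to upgrade the minimality hypothesis. This is a clean application; there is no substantive obstacle beyond verifying that the hypotheses of the earlier results are met.

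For the first statement, suppose $\cS_1 \subset B(H_1)$ and $\cS_2 \subset B(H_2)$ are separable operator systems which are minimal, block diagonal, and completely order isomorphic. Since each $\cS_j$ is block diagonal and minimal, Theorem \ref{thm:block_diagonal} (specifically the implication $(\ref{item:bd_sep_min}) \Rightarrow (\ref{item:bd_sep_fc})$) gives that each $\cS_j$ is fully compressed. Now Corollary \ref{cor:fc_uniqueness} applies directly: any complete order isomorphism between $\cS_1$ and $\cS_2$ is a unitary equivalence.

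For the second statement, suppose $T \in B(H_1)^d$ and $S \in B(H_2)^d$ are block diagonal and minimal with $\cW(T) = \cW(S)$. The associated operator systems $\cS_T$ and $\cS_S$ inherit the block diagonal structure of $T$ and $S$ respectively, and are minimal as operator systems precisely because $T$ and $S$ are minimal as tuples. Moreover, the identity $\cW(T) = \cW(S)$ implies that the map sending $T_i \mapsto S_i$ (and $I \mapsto I$) extends to a complete order isomorphism $\cS_T \to \cS_S$, as recorded in \cite[Theorem 5.1]{DDSS} and invoked in the proof of Corollary \ref{cor:fc_uniqueness_tuples}. Applying Theorem \ref{thm:block_diagonal} again yields that both $\cS_T$ and $\cS_S$ are fully compressed, so $T$ and $S$ are fully compressed $d$-tuples. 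Corollary \ref{cor:fc_uniqueness_tuples} then gives $T \cong S$.

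The only potential subtlety is confirming the equivalence between minimality of a tuple $T$ and minimality of $\cS_T$ as an operator system in the sense of Definition \ref{def:fc_enlightened}, but this is immediate since the generating set $\{I, T_1, \dots, T_d, T_1^*, \dots, T_d^*\}$ of $\cS_T$ compresses isometrically to a reducing subspace $G$ if and only if $T|_G$ has the same matrix range as $T$, so no new work is needed.
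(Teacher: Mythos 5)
Your proposal is correct and is essentially the argument the paper intends: the corollary is stated as an immediate consequence of Theorem \ref{thm:block_diagonal} (minimal $+$ block diagonal $\Rightarrow$ fully compressed) combined with Corollaries \ref{cor:fc_uniqueness} and \ref{cor:fc_uniqueness_tuples}, which is exactly what you do. Your closing remark on the equivalence of tuple minimality with operator-system minimality is also consistent with the paper's observation following Definition \ref{def:fc_enlightened}.
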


Theorem \ref{thm:block_diagonal} applies whether there are finitely many or infinitely many summands, so it generalizes finite-dimensional uniqueness results such as \cite[Theorem 1.2]{HKM13} and \cite[Theorem 1.2]{EHKM16}. See also the discussion before Corollary \ref{cor:finite_dim_cons}.

In \cite{PasSha18}, certain special cases of fully compressed $d$-tuples were classified without using strongly peaking representations, but rather a different concept known as a \textit{crucial matrix extreme point}. Our classification of fully compressed operator systems (or $d$-tuples) subsumes these results, so we now address how these two notions are related. Since crucial matrix extreme points are a reflection of the abstract operator system $\cS_T$, while strongly peaking representations rely on the concrete $C^*$-algebra in which an operator system lives, there are some caveats along the way. First, recall the following definitions.

%%%%%%%%%%%%%%%%%%%%%%%%%%%%
\begin{definition}
A \textit{matrix convex set} over $\mathbb{C}^d$ is a set $\cC \subseteq \bigcup\limits_{n=1}^\infty M_n(\bC)^d$, written $\cC = \bigcup\limits_{n=1}^\infty \cC_n$, such that the following conditions hold.
\begin{itemize}
\item If $X \in \cC_n$ and $Y \in \cC_m$, then $X \oplus Y \in \cC_{n + m}$.
\item If $X \in \cC_n$ and $\phi: M_n(\bC) \to M_m(\bC)$ is UCP, then $\phi(X) \in \cC_m$.
\end{itemize}
\end{definition}

Matrix convex sets are more generally defined over arbitrary vector spaces \cite[Definition 1.1]{WW99}, but we will only apply results in the above context. Also, because of Stinespring's theorem (see \cite[Theorem 4.1]{PauBook}), it suffices to replace the second condition with the weaker assumption that $\cC$ is closed under compressions to subspaces. That is, if $X \in \cC_n$ and $\alpha: \bC^m \to \bC^n$ is an isometry ($m \le n$), then $\alpha^* X \alpha \in \cC_m$. This implicitly includes the fact that matrix convex sets are closed under unitary conjugation. If $\alpha$ is a proper isometry, we say that $\alpha^* X \alpha$ is a proper compression of $X$.

A matrix convex set is called closed if each level $\cC_n = \cC \cap M_n(\bC)^d$ is closed, and bounded if there is a uniform matrix norm bound on any element of $\cC$. However, \cite[Corollary 4.6]{DDSS} implies boundedness of $\cC$ is equivalent to boundedness of the first level $\cC_1$. Closed and bounded matrix convex sets over $\bC^d$ are also precisely the matrix ranges $\cC = \cW(T)$ for $T \in B(H)^d$ by \cite[Propositions 2.5 and 3.5]{DDSS}. 

Matrix convex sets have two well-established definitions of extreme point, which we give below. Without the presence of infinite-dimensional points, there are some complications in describing the extreme points and Krein-Milman type results. See \cite[\S 6]{DavKen19} for an alternative notion of \textit{nc convexity}, which resolves some of these issues by including infinite-dimensional levels.

%%%%%%%%%%%%%%%%%%%%%%%%%%%%
\begin{definition} Let $\cC$ be a closed and bounded matrix convex set over $\bC^d$, and let $Y \in \cC_n$. 
Then $Y$ is called a \textit{matrix extreme point} if whenever
\[ 
Y = V_1^* \, X^{(1)} \, V_1 + \ldots + V_m  \, X^{(m)}  \, V_m 
\]
for rectangular matrices $V_i$ with $\sum V_i^* V_i = I$ and elements $X^{(i)} \in \cC_{n_i}$ with $n_i \leq n$, it follows that each $X^{(i)}$ is unitarily equivalent to $Y$.

Further, $Y$ is called an \textit{absolute extreme point} if whenever
\[
 Y = V_1^* \, X^{(1)} \, V_1 + \ldots + V_m  \, X^{(m)}  \, V_m 
\]
for rectangular matrices $V_i$ with $\sum V_i^* V_i = I$ and elements $X^{(i)} \in \cC$, it follows that each $X^{(i)}$ is unitarily equivalent to $Y$ or to $Y\oplus Z_i$ for some $Z_i \in \cC$.
\end{definition}

A matrix extreme point might still be written as a nontrivial combination of elements from higher levels of $\cC$, and the matrix extreme points of $\cW(T)$ are precisely the images of $T$ under pure matrix states by \cite[Theorem B]{Far00}. Finally, matrix extreme points enjoy a Krein-Milman theorem \cite[Theorem 4.3]{WW99} and an associated Milman converse \cite[Theorem 4.6]{WW99}. 

The absolute extreme points are much more restrictive, as seen in \cite[Corollary 1.1]{Eve18}. The results \cite[Theorem~4.2]{Kleski} and \cite[Lemma~6.10 and Corollary~6.28]{Kriel} show that the absolute extreme points are irreducible elements of $\cC$ which have no nontrivial dilations. That is, if an absolute extreme point $Y$ of $\cC$ is a proper compression of some $X \in \cC$, then $X \cong Y \oplus Z$ for some $Z \in \cC$. Such \textit{maximal} extreme points have the unique extension property by \cite[Proposition 2.4]{Arv08}, hence absolute extreme points correspond to finite-dimensional boundary representations. These facts admit generalizations to nc convex sets, as in \cite[\S 6]{DavKen19}.

From this point of view, an isolated extreme point is generalized by some $Y \in \cC$ which cannot be generated by the matrix extreme points that are inequivalent to $Y$.

%%%%%%%%%%%%%%%%%%%%%%%%%%%%%
\begin{definition}\cite[Definition 2.4]{PasSha18}\label{def:crucial_MEP}
Let $Y$ be an element of a closed and bounded matrix convex set $\cC$ over $\bC^d$, and define 
\[
 \cE_Y := \{X \in \cC: X \text{ is matrix extreme}, X \not\cong Y\}.
\] 
Then $Y$ is called a \textit{crucial matrix extreme} point of $\cC$ if the closed matrix convex hull of $\cE_Y$ does not include $Y$.
\end{definition}
\begin{remark}
The definition does not explicitly require $Y$ to be matrix extreme, but this follows from the Krein-Milman theorem \cite[Theorem 4.3]{WW99}. In fact,  \cite[Proposition 2.6]{PasSha18} implies every crucial matrix extreme point is absolute extreme, i.e. it corresponds to a finite-dimensional boundary representation.
\end{remark}

The following example, which is based upon \cite[Example 3.14]{Pas19}, shows that crucial matrix extreme points and finite-dimensional strongly peaking representations are not always identified.

%%%%%%%%%%%%%%%%%%%%%%%%%%%%
\begin{example}\label{ex:nonpeaking_scum}
Let $\Delta$ be the convex hull of $(0, 0), (1, 0)$, and $(0, 1)$. Since $\Delta$ is a simplex, it is the first level of a unique matrix convex set by \cite[Theorem 4.1]{PSS18} or \cite[Theorem 4.7]{FNT}. Let $T = (1, 0) \oplus (0, 1) \oplus (K_1, K_2)$, where the last summand is an infinite-dimensional, irreducible pair of positive compact operators with $ K_1 + K_2 \leq \frac{1}{2} I$. Then $\cW(T)$ is the unique matrix convex set over $\Delta$, and $\cS_T \subseteq K(H) + \mathbb{C} \, I$.

The crucial matrix extreme points of $\cW(T)$ are $(0,0)$, $(1, 0)$, and $(0, 1)$ by \cite[Proposition 2.5]{PasSha18}, since $\cW(T)$ is the minimal matrix convex set over $\Delta$. The point $(0, 0)$ is detected by the singular state $\pi_\infty: C^*(\cS_T) \to \bC$ which annihilates the compacts. This representation is a limit of compressions of $\pi_K: T \mapsto K$, which is a subrepresentation of the identity. We conclude $\pi_\infty$ is not strongly peaking for $\cS_T$. Similarly, $\pi_K$ is not strongly peaking since $\cS_T$ is completely normed by one-dimensional representations.

The pair $(T_1,T_2)$ is minimal but not fully compressed, and $C^*(\cS_T)$ is much larger than the $C^*$-envelope. In fact, the assumption of nonsingularity in \cite[\S 6]{DDSS} was specifically designed to address issues with $\pi_\infty$ for the uniqueness of minimal compact tuples, based upon this example.  In the context of Theorem \ref{thm:fc_full_equiv_general}, for a tuple $T \in K(H)^d$ of infinite-dimensional compacts, the singular state $\pi_\infty$ will never appear as a strongly peaking representation. 
\end{example}

As long as one restricts attention to the $C^*$-envelope, the disparity between crucial matrix extreme points and finite-dimensional strongly peaking representations disappears. To show this, we will need the following separation theorem. It is an immediate consequence of \cite[Theorem 1.6]{WW99}, which is itself a variant of results in \cite[\S 5]{EW97}. A translation from $I$ to $0$ is possible because all UCP maps are unital. While we state the claim in full generality, we note that it reduces to consideration of the special case $\cS = \cS_T$, or rather matrix convex sets over $\bC^d$, by a straightforward argument.

%%%%%%%%%%%%%%%%%%%%%%%%%%%%
\begin{theorem} \label{thm:EW_variant}
Let $\bK$ be the matrix state space of an operator system $\cS$.
Suppose that $\bL$ is a proper, pointwise closed, matrix convex subset of $\bK$, and let $x \in K_n \setminus L_n$.
Then there is a matrix $S=S^* \in M_n(\cS)$ such that
\[
  y(S) \le 0 \FORAL y \in \bL \qand  x(S) \not\le 0  .
\]
\end{theorem}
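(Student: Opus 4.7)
The plan is to establish the separation in three movements: reduce to a finitely generated operator subsystem, apply the Effros--Winkler matricial Hahn--Banach theorem to the resulting matrix convex subset of $\bigcup_m M_m(\bC)^d$, and translate the separating affine functional into a self-adjoint element $S \in M_n(\cS)$.

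For the reduction, I would exploit pointwise closedness of $\bL$. Since $x \in K_n \setminus L_n$, there exist finitely many elements $s_1, \dots, s_d \in \cS$ (which may be assumed self-adjoint after replacing each by its real and imaginary parts) and a neighborhood of $(x(s_1), \dots, x(s_d))$ in $M_n(\bC)^d$ disjoint from $\{(y(s_1), \dots, y(s_d)) : y \in L_n\}$. Setting $T = (s_1,\dots,s_d)$, the matrix affine restriction $\rho : \bK \to \cW(T)$, $\rho(\phi) = (\phi(s_1), \dots, \phi(s_d))$, sends $\bL$ to a matrix convex set whose pointwise closure does not meet $\rho(x)$. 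Any separator in $M_n(\cS_T)_{sa}$ pulls back to one in $M_n(\cS)_{sa}$ via the inclusion $\cS_T \hookrightarrow \cS$, so the problem reduces to matrix convex subsets of $\bigcup_m M_m(\bC)^d$.

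Next, I would apply the Effros--Winkler matricial separation theorem (cf.\ \cite[\S 5]{EW97} and the variant \cite[Theorem 1.6]{WW99}) to the closed matrix convex set $\overline{\rho(\bL)}$ and the excluded point $\rho(x)$. This yields self-adjoint matrices $A_0, A_1, \dots, A_d \in M_n(\bC)_{sa}$ such that the noncommutative pencil
\[
 \mathcal{L}(Y) := A_0 \otimes I + \sum_{j=1}^d A_j \otimes Y_j
\]
satisfies $\mathcal{L}(Y) \leq 0$ for every $Y \in \overline{\rho(\bL)}$ at every matrix level, while $\mathcal{L}(\rho(x)) \not\leq 0$. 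Setting
\[
 S := A_0 \otimes 1_{\cS} + \sum_{j=1}^d A_j \otimes s_j \;\in\; M_n(\cS)_{sa},
\]
unitality and linearity of each UCP map $y$ give $y(S) = \mathcal{L}(\rho(y))$, so $y(S) \leq 0$ for every $y \in \bL$ while $x(S) \not\leq 0$, as required.

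The main obstacle is bookkeeping around the unit: the Effros--Winkler theorem is classically phrased as a ``slab'' separation of the form $L(\cdot) \leq I$ for matrix convex sets containing a distinguished point (typically the origin or the identity), whereas we need the form $y(S) \leq 0$. Absorbing the affine shift into the constant term $A_0 \otimes I$ is routine because UCP maps are unital, but one must take care to apply the matricial separation inside $M_*(\bC)^d$, where the ambient vector space structure is unambiguous, rather than attempting to separate directly inside the operator system framework. Once the reduction to $\cW(T)$ is in place, everything is a translation between dual pairings.
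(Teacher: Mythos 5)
Your proposal is correct and follows essentially the same route the paper intends: the paper gives no detailed proof but explicitly describes deducing the statement from \cite[Theorem 1.6]{WW99} (the Effros--Winkler-type separation), translating the affine level from $I$ to $0$ via unitality of UCP maps, and reducing to the case $\cS = \cS_T$ of matrix convex sets over $\bC^d$. Your three steps (finite reduction using pointwise closedness, matricial separation, absorbing the constant term $A_0 \otimes 1_{\cS}$) are precisely that argument written out.
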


%%%%%%%%%%%%%%%%%%%%%%%%%%%%
\begin{theorem}\label{thm:no_prob_envelope}
Fix $T \in B(H)^d$ and the concrete operator system $\cS_T$. If $C^*(\cS_T) = C^*_e(\cS_T)$, then the map $\pi \mapsto \pi(T)$ is a bijection between finite-dimensional strongly peaking representations of $\cS_T$ and crucial matrix extreme points of $\cW(T)$.

If $C^*(\cS_T) \ne C^*_e(\cS_T)$, then the map $\pi \mapsto \pi(T)$ still sends finite-dimensional strongly peaking representations
to crucial matrix extreme points of $\cW(T)$, but it need not be surjective.
\end{theorem}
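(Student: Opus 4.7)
The plan is to prove the bijection when $C^*(\cS_T) = C^*_e(\cS_T)$, and then observe that the forward direction persists while surjectivity can fail without the envelope assumption.

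For the forward direction, let $\pi$ be a finite-dimensional strongly peaking representation. Since $\pi$ is irreducible, $\pi|_{\cS_T}$ is a pure matrix state and $\pi(T) \in \cW_n(T)$ is matrix extreme. To show $\pi(T)$ is crucial, I would argue by contradiction: suppose $\pi|_{\cS_T}$ lies in $\ol{\mathrm{mconv}}(\Phi)$, where $\Phi$ is the collection of pure matrix states $\phi \not\cong \pi|_{\cS_T}$. Approximate $\pi|_{\cS_T}$ by finite matrix convex combinations $\sum_i V_{ik}^* \phi_{ik} V_{ik}$, extend each $\phi_{ik}$ to a pure UCP map $\tilde\phi_{ik} = U_{ik}^* \rho_{ik} U_{ik}$ on $C^*(\cS_T)$ with $\rho_{ik}$ irreducible, and extract a BW-subsequential limit $\Psi$ of $\sum_i V_{ik}^* \tilde\phi_{ik} V_{ik}$. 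Since $\pi$ is a boundary representation by Remark \ref{remark:sp_imp_acc}, the unique extension property forces $\Psi = \pi$.

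The next step uses the rank-one projection $P$ from Corollary \ref{cor:peaking_projection} and the unit $e$ of the ideal $J \cong M_n$ from Lemma \ref{lem:peaking and compacts}. Any $\rho_{ik} \not\cong \pi$ contributes $0$ to both $\tilde\phi_{ik}(P)$ and $\tilde\phi_{ik}(e)$, while any $\rho_{ik} \cong \pi$ makes $\tilde\phi_{ik} = U_{ik}^* \pi U_{ik}$ for a proper isometry $U_{ik}: \mathbb{C}^{m_{ik}} \hookrightarrow H_\pi$ (proper because $\phi_{ik} \not\cong \pi|_{\cS_T}$ forces $m_{ik} < n$). Evaluating $\Psi(e) = I_n$ then forces the ``Case A'' weights $\sum_{\text{A}} V_{ik}^* V_{ik}$ to vanish asymptotically, reducing to showing that the CP maps $\Phi_k(A) := \sum_i W_{ik}^* A W_{ik}$ with $W_{ik} := U_{ik} V_{ik}$ of rank $\leq n-1$ cannot converge pointwise to $\id_{M_n}$. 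This is the main obstacle, to be handled by a Choi matrix argument: the Choi matrix of each $\Phi_k$ is a positive combination of rank-one operators built from vectors of Schmidt rank $\leq n-1$, placing it in the closed convex set of Schmidt-number-$\leq n-1$ operators on $\mathbb{C}^n \otimes \mathbb{C}^n$, whereas the Choi matrix of $\id_{M_n}$ is the rank-one projection onto the maximally entangled vector of Schmidt rank $n$. Lower semicontinuity of Schmidt number delivers the contradiction.

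For the reverse direction under $C^*(\cS_T) = C^*_e(\cS_T)$, let $Y$ be a crucial matrix extreme point. By \cite[Proposition 2.6]{PasSha18}, $Y$ is absolute extreme, so $Y = \pi(T)$ for a finite-dimensional boundary representation $\pi$. Applying Theorem \ref{thm:EW_variant} to $\pi|_{\cS_T} \notin \ol{\mathrm{mconv}}(\Phi)$ produces $R = R^* \in M_n(\cS_T)$ with $\phi(R) \leq 0$ for all $\phi \in \Phi$ but $\pi(R) \not\leq 0$. For any irreducible $\sigma \not\cong \pi$ and isometry $V$ into $H_\sigma$, the restriction of $V^* \sigma V$ to $\cS_T$ is pure with minimal Stinespring $\sigma \not\cong \pi$, so it lies in $\Phi$; hence $V^* \sigma(R) V \leq 0$, and letting $V$ range over isometries into finite-dimensional subspaces of $H_\sigma$ yields $\sigma(R) \leq 0$. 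Then $S := R + \|R\|\, I_n \in M_n(\cS_T)^{sa}$ satisfies $\|\sigma(S)\| \leq \|R\|$ for all such $\sigma$ while $\|\pi(S)\| > \|R\|$ (as $\pi(R)$ has a strictly positive eigenvalue), witnessing that $\pi$ is strongly peaking.

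Without the envelope assumption, the forward direction still holds: any finite-dimensional strongly peaking representation $\pi$ of $C^*(\cS_T)$ factors through $C^*_e(\cS_T)$ and remains strongly peaking there, so the first part gives $\pi(T)$ crucial. Surjectivity, however, can fail, since crucial matrix extreme points correspond to strongly peaking representations of $C^*_e(\cS_T)$, not necessarily of the larger $C^*(\cS_T)$; Example \ref{ex:nonpeaking_scum} exhibits a crucial matrix extreme point whose corresponding representation is strongly peaking in the $C^*$-envelope but not in $C^*(\cS_T)$.
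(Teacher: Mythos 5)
Your forward direction (strongly peaking $\Rightarrow$ crucial) is correct but follows a genuinely different route from the paper: the paper invokes the Webster--Winkler Milman converse to write $Y$ as a limit of compressions of \emph{single} points $Z_k \in \cE_Y$, dilates each to an accessible boundary representation $\sigma_k \not\cong \pi$, and gets an immediate contradiction with the norm inequality defining strong peaking; you instead work with general matrix convex combinations, use BW-compactness and the unique extension property to identify the limit with $\pi$, kill the inequivalent summands with the unit of the ideal $J \cong M_n$, and finish with a Schmidt-number rigidity argument showing that CP maps built from isometries of rank at most $n-1$ cannot converge to $\id_{M_n}$. That argument is more work but it does close.

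The reverse direction, however, has a genuine gap at the step ``for any irreducible $\sigma \not\cong \pi$ and isometry $V$ into $H_\sigma$, the restriction of $V^*\sigma V$ to $\cS_T$ is pure \dots\ so it lies in $\Phi$; hence $V^*\sigma(R)V \le 0$.'' A compression of an irreducible representation is a pure UCP map on the $C^*$-algebra, but its restriction to the operator system $\cS_T$ need \emph{not} be a pure matrix state of $\cS_T$ (purity is not preserved under restriction to a subsystem), so it need not be matrix extreme; and even after decomposing it into pure matrix states, nothing prevents the unitary orbit of $\pi|_{\cS_T}$ from appearing in that decomposition, in which case membership in $\ol{\operatorname{mconv}}(\cE_Y)$ fails and the separation theorem gives you nothing. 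Consequently $\sigma(R) \le 0$ for \emph{arbitrary} irreducible $\sigma \not\cong \pi$ does not follow from what you have established. This is exactly the point the paper's proof is engineered to avoid: it only needs $\sigma(S) \le 0$ for \emph{accessible boundary} representations $\sigma \not\cong \pi$ (there each finite stage $\phi_k$ of the dilation is a pure matrix state which must lie in $\cE_Y$, since a stage equivalent to $\pi|_{\cS_T}$ would already be maximal and force $\sigma \cong \pi$), concludes that the compression of a faithful presentation $\pi \oplus \bigoplus_{\sigma\in\Omega}\sigma$ of the $C^*$-envelope to $H_\pi^\perp$ is not completely isometric, and then invokes Corollary \ref{cor:envelope and minimal}, whose ideal-theoretic argument (producing an ideal $J_1 \cong K(H_\pi)$ annihilated by every other irreducible representation) is what legitimately upgrades the statement to the full quantifier over all of $\Irr(C^*_e(\cS_T))$. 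Your intended conclusion is true, but it requires that ideal argument; as written, the purity claim is false and the step fails.
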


\begin{proof}
Suppose there is a strongly peaking representation $\pi: C^*(\cS_T) \to M_n(\bC)$ with $\pi(T) = Y$, where we do not assume $C^*(\cS_T) = C^*_e(\cS_T)$. Since any strongly peaking representation $\pi$ is pure, $Y$ is a matrix extreme point by \cite[Theorem B]{Far00}. If $Y$ is not crucial matrix extreme, then the closed matrix convex hull of
\[ \cE_Y = \{X \in \cW(T): X \text{ is matrix extreme}, \, X \not\cong Y\}\]
includes $Y$, hence equals $\cW(T)$ by the matricial Krein-Milman theorem of Webster and Winkler \cite[Theorem 4.3]{WW99}. 
Moreover, their converse \cite[Theorem 4.6]{WW99} implies that the closure of $\cE_Y$ together with all compressions to smaller subspaces must contain the matrix extreme point $Y$.
Therefore, $Y$ is a limit of compressions of some sequence $Z_k \in \cE_Y$. In particular, each $Z_k$ has $\dim(Z_k) \geq \dim(Y)$.
The pure matrix state $\phi_k: T \mapsto Z_k$ is either already maximal, and hence it is a finite-dimensional boundary representation $\sigma_k$, or it dilates to an accessible boundary representation $\sigma_k$ on some higher dimensional space (possibly infinite). Since $Z_k \in \cE_Y$ and $\dim(Z_k) \geq \dim(Y)$, in either case we have $\sigma_k \not\cong \pi$. We conclude that, on $\cS_T$, $\pi$ is a pointwise limit of compressions of irreducible representations inequivalent to $\pi$. This contradicts the fact that $\pi$ is strongly peaking, so we must have that $Y$ is crucial matrix extreme.

Suppose conversely that  $Y$ is a crucial matrix extreme point. We seek a strongly peaking representation of the $C^*$-envelope.
Since $Y$ is absolute extreme \cite[Proposition 2.6]{PasSha18}, there is a finite-dimensional boundary representation $\pi$ with $\pi(T) = Y$ by \cite[Theorem 4.2]{Kleski}. 
Theorem~\ref{thm:EW_variant} shows there is some $S=S^* \in M_n(\cS_T)$ with
\[
 \phi(S) \le 0 \text{ for all  } \phi\in \cE_Y \qand \pi(S) \not\le 0 .
\]

Fix a concrete presentation $C^*(\iota(\cS_T))$ of the $C^*$-envelope $C^*_e(\cS_T)$ by taking $\iota = \pi \oplus \bigoplus\limits_{\sigma \in \Omega} \sigma$, where $\Omega$ is a sufficient countable collection of accessible boundary representations inequivalent to $\pi$.
Let $f(x) = \max\{ 0, x \}$ for $x \in \bR$.
If $\sigma \in \Omega$, then $\sigma$ is an accessible boundary representation which is either in $\cE_Y$ or a dilation of points $\phi_k\in\cE_Y$,
so $\sigma(S) = \lim \phi_k(S) \le 0$. Therefore $\sigma(f(S)) = 0$ while $\pi(f(S)) \ne 0$.

We have shown that the compression of $C^*_e(\cS_T) = C^*(\iota(\cS_T))$ to the reducing subspace $H_\pi^\perp$ is not isometric. Since we are on the $C^*$-envelope, the compression is not completely isometric on $\cS_T$. Certainly the range of $\pi$ contains the compacts since it is finite-dimensional, so Corollary \ref{cor:envelope and minimal} implies that $\pi$ is strongly peaking.

Finally, Example \ref{ex:nonpeaking_scum} shows that if $C^*(\cS_T) \ne C^*_e(\cS_T)$, surjectivity may fail.
\end{proof}

The above claims apply in particular to the matrix case, i.e. to $\cW(A)$ where $A$ is a tuple of matrices. Note that free spectrahedra and matrix ranges of matrix tuples are related via the polar dual, as in \cite[Proposition 3.1]{DDSS}. Thus, \cite[Theorem 1.2]{EHKM16} implies that a matrix tuple $A$ is minimal whenever $A$ is the direct sum of absolute extreme points of $\cW(A)$, and vice versa. Since boundary representations correspond to absolute extreme points, this claim may be recovered by our results (and some results of Arveson). We give the following corollary for completeness.

%%%%%%%%%%%%%%%%%%%%%%%%%%%%
\begin{corollary}\label{cor:finite_dim_cons}
If $A$ is a tuple of matrices, then the crucial matrix extreme points of $\cW(A)$ are precisely the absolute extreme points of $\cW(A)$. In particular, there are only finitely many such points up to unitarily equivalence, each of which is a summand of $A$. Finally, $A$ is minimal if and only if $A$ is multiplicity-free and every summand of $A$ is a crucial matrix extreme point.
\end{corollary}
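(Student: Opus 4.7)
The plan is to deduce the corollary from Theorems \ref{thm:block_diagonal} and \ref{thm:no_prob_envelope}, exploiting the fact that for a tuple of matrices $A$ the algebra $C^*(\cS_A)$ is finite-dimensional. In particular, $C^*(\cS_A)$ is block diagonal and its quotient $C^*_e(\cS_A)$ has a finite, hence discrete, spectrum.

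First I would establish that crucial matrix extreme and absolute extreme coincide for $\cW(A)$. One direction is \cite[Proposition 2.6]{PasSha18}. For the converse, given an absolute extreme point $Y$, the facts recorded just before the statement of the corollary produce a finite-dimensional boundary representation $\pi$ of $\cS_A$ with $\pi(A) = Y$, and $\pi$ factors through $C^*_e(\cS_A)$. Since $C^*_e(\cS_A)$ is finite-dimensional, $\widehat{C^*_e(\cS_A)}$ is finite and discrete, so $[\pi]$ is isolated. Lemma \ref{lem:open points} then shows that $\pi$ is strongly peaking for $\iota(\cS_A) \subset C^*_e(\cS_A)$, and Theorem \ref{thm:no_prob_envelope} applied inside the $C^*$-envelope (using $\cW(\iota(A)) = \cW(A)$) yields that $Y$ is a crucial matrix extreme point. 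Finiteness up to unitary equivalence then follows from the fact that $C^*_e(\cS_A)$ has only finitely many irreducible representation classes.

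To see that each such point appears as a summand of $A$, I would decompose $\id : C^*(\cS_A) \to B(H)$ into irreducibles. Because $C^*(\cS_A)$ is finite-dimensional and $\id$ is faithful, every irreducible representation $\pi_j$ of $C^*(\cS_A)$ appears in $\id$ with multiplicity $m_j \geq 1$, giving $A \cong \bigoplus_j \pi_j(A)^{(m_j)}$. Every absolute extreme point is of the form $\pi_j(A)$ for some boundary representation $\pi_j$, hence is a summand of $A$.

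For the minimality characterization, I would apply Theorem \ref{thm:block_diagonal}: since $C^*(\cS_A)$ is block diagonal, $A$ is minimal if and only if $\id$ is a multiplicity-free direct sum of strongly peaking representations, which also forces $C^*(\cS_A) = C^*_e(\cS_A)$. Combined with the bijection in Theorem \ref{thm:no_prob_envelope} (available because $C^*(\cS_A) = C^*_e(\cS_A)$), this translates into ``multiplicity-free and every summand is crucial matrix extreme''. Conversely, if every summand $\pi_j(A)$ is crucial matrix extreme, then each is absolute extreme and so each $\pi_j$ is a boundary representation of $\cS_A$; since the $\pi_j$ exhaust the irreducible representations of $C^*(\cS_A)$, the Shilov ideal is trivial and $C^*(\cS_A) = C^*_e(\cS_A)$, so Theorem \ref{thm:no_prob_envelope} shows each $\pi_j$ is strongly peaking, and Theorem \ref{thm:block_diagonal} then yields minimality. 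The main delicate point I anticipate is keeping careful track of the distinction between $C^*(\cS_A)$ and $C^*_e(\cS_A)$, since the notion of strongly peaking is sensitive to the ambient algebra.
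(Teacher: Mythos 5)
Your proposal is correct and follows essentially the same route as the paper: both parts reduce to Theorems \ref{thm:block_diagonal} and \ref{thm:no_prob_envelope}, with \cite[Proposition 2.6]{PasSha18} and the correspondence between absolute extreme points and finite-dimensional boundary representations supplying the extreme-point dictionary. The one place you diverge is in showing that the boundary representation $\pi$ with $\pi(A)=Y$ is strongly peaking: you pass to the $C^*$-envelope, use discreteness of $\widehat{C^*_e(\cS_A)}$ together with Lemma \ref{lem:open points}, and then invoke the bijection of Theorem \ref{thm:no_prob_envelope} there (which needs the small extra observation that $\cW(\iota(A))=\cW(A)$), whereas the paper notes that finite-dimensionality forces $\pi$ not to annihilate the compacts, cites \cite[Theorem 7.2]{Arv11} to get strong peaking for $\cS_A$ itself, and then uses the one-directional part of Theorem \ref{thm:no_prob_envelope} that is valid even when $C^*(\cS_A)\neq C^*_e(\cS_A)$. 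Both routes work; the paper's version sidesteps the envelope bookkeeping you correctly flag as the delicate point.
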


\begin{proof}
By \cite[Proposition 2.6]{PasSha18}, every crucial matrix extreme point is absolute extreme. If $X \in \cW(A)$ is absolute extreme, then there is a boundary representation $\phi$ of $\cS_A$ which maps $A$ to $X$ by \cite[Corollary 6.28]{Kriel}. This representation is in particular unital, so finite-dimensionality of $A$ implies that $\phi$ does not annihilate the compacts. Thus, \cite[Theorem 7.2]{Arv11} shows that $\phi$ is strongly peaking for $\cS_A$. We may then apply Theorem \ref{thm:no_prob_envelope} (whether or not $\cS_A$ sits inside its $C^*$-envelope) to see that $X$ is crucial matrix extreme. The remainder of the proposition follows from Theorems \ref{thm:block_diagonal} and \ref{thm:no_prob_envelope}.
\end{proof}

Returning to the case of infinite-dimensional operators, we next see that whenever $\cC = \cW(T)$ is generated by its absolute extreme points and $T$ is fully compressed, we may conclude block diagonality of $T$ instead of assuming it.

%%%%%%%%%%%%%%%%%%%%%%%%%%%%
\begin{corollary}\label{cor:AEP_generated_uniqueness}
Let $T \in B(H)^d$, and suppose $\cW(T)$ is the closed matrix convex hull of its absolute extreme points. Then $T$ is fully compressed if and only if $T \cong \bigoplus\limits_{X \in \Lambda} X$, where $\Lambda$ is precisely the set of crucial matrix extreme points, enumerated once per unitary equivalence class.
\end{corollary}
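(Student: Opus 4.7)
The plan is to combine the classification of fully compressed operator systems from Theorem \ref{thm:fc_full_equiv_general}(4) with the correspondence between finite-dimensional strongly peaking representations and crucial matrix extreme points from Theorem \ref{thm:no_prob_envelope}. Throughout, I write $\Lambda_{sp}$ for a set of representatives of the unitary equivalence classes of strongly peaking representations of $\cS_T$ and $\Lambda$ for the analogous set of crucial matrix extreme points of $\cW(T)$.

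For the forward direction, I would assume $T$ is fully compressed, so that Theorem \ref{thm:fc_full_equiv_general} gives $C^*(\cS_T) = C^*_e(\cS_T)$ and $T \cong \bigoplus_{\pi \in \Lambda_{sp}} \pi(T)$. The crucial step is to show that every $\pi \in \Lambda_{sp}$ is finite-dimensional, so that the hypothesis on absolute extreme points can be fed into Theorem \ref{thm:no_prob_envelope}. Given such a $\pi$, the strongly peaking property supplies $S \in M_n(\cS_T)$ and $r < \|\pi(S)\|$ with $\|\sigma(S)\| \le r$ for every irreducible $\sigma \not\cong \pi$. If $\pi$ were infinite-dimensional, then every finite-dimensional boundary representation $\pi_Y$ realizing an absolute extreme point $Y$ of $\cW(T)$ would satisfy $\pi_Y \not\cong \pi$, hence $\|\pi_Y(S)\| \le r$. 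The hypothesis that $\cW(T)$ is the closed matrix convex hull of its absolute extreme points makes the $\pi_Y$ completely norming on $\cS_T$, so $\|S\| = \sup_Y \|\pi_Y(S)\| \le r < \|\pi(S)\|$, contradicting $\|\pi(S)\| \le \|S\|$. Once each $\pi$ is finite-dimensional, Theorem \ref{thm:no_prob_envelope} yields the bijection $\pi \leftrightarrow \pi(T)$ between $\Lambda_{sp}$ and $\Lambda$, and $T \cong \bigoplus_{\pi \in \Lambda_{sp}} \pi(T) \cong \bigoplus_{X \in \Lambda} X$.

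For the backward direction, assuming $T \cong \bigoplus_{X \in \Lambda} X$, the remark after Definition \ref{def:crucial_MEP} tells me each crucial matrix extreme point is absolute extreme, hence finite-dimensional and irreducible; this places $T$ inside the block diagonal framework of Theorem \ref{thm:block_diagonal}. I would then verify condition (2) of that theorem: fix $X_0 \in \Lambda$ and suppose, for contradiction, that the compression removing the $X_0$-block is completely isometric on $\cS_T$. Then $X_0 \in \cW\bigl(\bigoplus_{X \ne X_0} X\bigr)$, which is the closed matrix convex hull of $\{X \in \Lambda : X \ne X_0\}$. Each such $X$ is a matrix extreme point of $\cW(T)$ not unitarily equivalent to $X_0$, so this hull is contained in $\overline{\operatorname{conv}}_{\mathrm{mat}}(\cE_{X_0})$, contradicting the definition of $X_0$ being crucial. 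Theorem \ref{thm:block_diagonal} then delivers $T$ fully compressed via the equivalence (2) $\Leftrightarrow$ (4).

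The main obstacle is the finite-dimensionality of strongly peaking representations in the forward direction, since without it Theorem \ref{thm:no_prob_envelope} is unavailable. The absolute-extreme-norming hypothesis is exactly what rules out an infinite-dimensional strongly peaking $\pi$, via the strict gap in the strongly peaking inequality set against the complete norming. Everything else is straightforward bookkeeping with the earlier results.
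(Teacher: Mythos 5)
Your proof is correct, and the forward direction is essentially the paper's own argument: the paper likewise observes that the absolute-extreme-point hypothesis forces $\cS_T$ to be completely normed by finite-dimensional boundary representations, so that every strongly peaking representation is finite-dimensional (this is exactly the gap argument you spell out, already recorded in Remark \ref{remark:sp_imp_acc}), and then invokes Theorems \ref{thm:fc_full_equiv_general} and \ref{thm:no_prob_envelope}. Where you diverge is the backward direction. The paper runs both directions through the bijection of Theorem \ref{thm:no_prob_envelope}: each crucial matrix extreme point is absolute extreme, hence a boundary representation, hence (on the $C^*$-envelope) strongly peaking, so $T$ is the multiplicity-free direct sum of strongly peaking representations and condition (\ref{item:individual_classification}) of Theorem \ref{thm:fc_full_equiv_general} applies. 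You instead feed $T$ into Theorem \ref{thm:block_diagonal} and verify its condition (\ref{item:bd_sep_single}) directly from Definition \ref{def:crucial_MEP}: deleting the $X_0$-block would place $X_0$ in the closed matrix convex hull of $\{X \in \Lambda : X \not\cong X_0\} \subseteq \cE_{X_0}$, contradicting cruciality. This is a genuine, if modest, simplification --- it bypasses the harder ``crucial implies strongly peaking'' half of Theorem \ref{thm:no_prob_envelope}, whose proof needs the Effros--Winkler separation (Theorem \ref{thm:EW_variant}), at the cost of importing Theorem \ref{thm:block_diagonal}. Both routes are fully supported by the paper's machinery; the paper's is more uniform across the two directions, while yours keeps the backward implication closer to the definitions.
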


\begin{proof}
Since $\cW(T)$ is the closed matrix convex hull of its absolute extreme points, $\cS_T$ is completely normed by finite-dimensional boundary representations, hence every strongly peaking representation is finite-dimensional. Moreover, either claim implies that $C^*(\cS_T) = C^*_e(\cS_T)$. The result then follows from Theorems \ref{thm:fc_full_equiv_general} and \ref{thm:no_prob_envelope}.
\end{proof}

The assumptions of Corollary \ref{cor:AEP_generated_uniqueness} apply, for example, if $\cS_T$ sits inside an FDI $C^*$-algebra (see \cite[\S 1]{Courtney}). These $C^*$-algebras are used in \cite[Theorem 1.5]{Hartz} to prove finite-dimensional dilation results. Similarly, \cite[Corollary 6.13]{Kriel} shows that whenever a closed and bounded matrix convex set $\cC$ is generated by some fixed level $\cC_n$, it is generated by its absolute extreme points. However, \cite[Example 7.10 and Remark 7.11]{Kriel} show that generation by a finite level is not a necessary condition. That is, it is possible for an operator system to be completely normed by finite-dimensional boundary representations but still admit infinite-dimensional ones.

%%%%%%%%%%%%%%%%%%%%%%%%%%%%
\begin{corollary}
Let $\cC$ be a closed and bounded matrix convex set over $\bC^d$ which is the closed matrix convex hull of its absolute extreme points. Then there exists a fully compressed tuple $T \in B(H)^d$ with $\cW(T) = \cC$ if and only if $\cC$ is the closed matrix convex hull of its crucial matrix extreme points.
\end{corollary}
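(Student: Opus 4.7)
\emph{Proof plan.} The plan is to deduce both implications from Corollary~\ref{cor:AEP_generated_uniqueness}, which classifies fully compressed tuples whose matrix range is generated by absolute extreme points.

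For the forward direction, suppose $T \in B(H)^d$ is fully compressed with $\cW(T) = \cC$. The standing hypothesis on $\cC$ is precisely that $\cW(T)$ is the closed matrix convex hull of its absolute extreme points, so Corollary~\ref{cor:AEP_generated_uniqueness} applies and yields $T \cong \bigoplus_{X \in \Lambda} X$, where $\Lambda$ is the set of crucial matrix extreme points of $\cC$ taken once per unitary equivalence class. A standard matrix-convexity fact about direct sums tells us that $\cW(\bigoplus_{X \in \Lambda} X)$ equals the closed matrix convex hull of $\bigcup_{X \in \Lambda} \cW(X)$, and since each $\cW(X)$ is in turn generated by $X$ itself, we conclude that $\cC$ is the closed matrix convex hull of its crucial matrix extreme points.

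For the reverse direction, suppose $\cC$ is the closed matrix convex hull of its crucial matrix extreme points, and let $\Lambda$ be a transversal of those points. I first need $\Lambda$ to be countable: $\cC$ admits a presentation $\cW(T_0)$ on a separable Hilbert space, and every crucial matrix extreme point is absolute extreme, hence arises as a finite-dimensional boundary representation of the separable operator system $\cS_{T_0}$; since $C^*_e(\cS_{T_0})$ is separable, only countably many unitary equivalence classes of such representations occur. Define $T := \bigoplus_{X \in \Lambda} X$, which then acts on a separable Hilbert space. The same direct-sum identity for matrix ranges as above yields $\cW(T)$ equal to the closed matrix convex hull of $\Lambda$, which is $\cC$ by hypothesis. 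Corollary~\ref{cor:AEP_generated_uniqueness} applied in the reverse direction then gives that $T$ is fully compressed: $\cW(T) = \cC$ is the closed matrix convex hull of its absolute extreme points, and $T$ is precisely the multiplicity-free direct sum of the crucial matrix extreme points of $\cW(T)$.

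The principal technical points to verify are the matrix-range identity for direct sums and the countability of $\Lambda$. The identity $\cW(\bigoplus X_i) = \overline{\operatorname{conv}}^{\,\mathrm{mat}}(\bigcup \cW(X_i))$ is routine: one inclusion follows from compressing to individual summands, and the other from expressing any UCP image of a direct sum as a matrix convex combination of UCP images of its blocks. The countability of $\Lambda$ is the only substantive subsidiary step, and it reduces cleanly to the fact that a separable $C^*$-algebra admits only countably many inequivalent irreducible representations that can appear as summands of a given representation.
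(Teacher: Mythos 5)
Your architecture is the one the paper intends: this corollary is stated as an immediate consequence of Corollary~\ref{cor:AEP_generated_uniqueness}, and both of your implications, together with the identity $\cW(\bigoplus_i X_i) = $ closed matrix convex hull of $\bigcup_i \cW(X_i)$ and the fact that $\cW(X)$ is the closed matrix convex hull of the single point $X$, are exactly what is needed. The forward direction is complete as written.

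The one genuine flaw is your justification of the countability of $\Lambda$. The assertion that a separable operator system admits only countably many unitary equivalence classes of finite-dimensional boundary representations is false: $C[0,1]$, viewed as an operator system equal to its own $C^*$-envelope, has uncountably many inequivalent characters, all of which are boundary representations. Your fallback formulation --- that a representation on a separable space has only countably many inequivalent irreducible summands --- is true but does not apply, because a crucial matrix extreme point need not occur as a summand of an arbitrary presentation $T_0$ with $\cW(T_0)=\cC$; for $T_0 = \bigoplus_n \tfrac1n$ the crucial point $0$ of $\cW(T_0)$ is not an eigenvalue of $T_0$. What actually forces countability is cruciality rather than finite-dimensionality: by Theorem~\ref{thm:no_prob_envelope} the crucial matrix extreme points of $\cC$ correspond bijectively to the finite-dimensional strongly peaking representations of $C^*_e(\cS_{T_0})$, and by Lemma~\ref{lem:open points} these are isolated points of $\widehat{C^*_e(\cS_{T_0})}$. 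Since $C^*_e(\cS_{T_0})$ is a separable $C^*$-algebra, its primitive ideal space is second countable, so it has at most countably many isolated points. (Alternatively, Lemma~\ref{lem:super_separating} separates each crucial point of $\cC_n$ from every inequivalent matrix extreme point by a definite spectral gap, so the crucial points at each level form a discrete, hence countable, subset of the compact metric space $\cC_n$.) With that substitution, the reverse direction goes through exactly as you describe.
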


For matrix convex sets generated by the first level, i.e. sets of the form $\cW^{\text{min}}(K)$, the above results improve the relevant portion of \cite[Theorem 4.10]{PasSha18}. In this case, a fully compressed tuple for $\cW(T) = \Wmin{}(K)$ is the direct sum of isolated extreme points of $K$, without the need to assume $T$ is normal ahead of time.

%%%%%%%%%%%%%%%%%%%%%%%%%%%%
\begin{corollary}
Let $K$ be a compact convex subset of Euclidean space. Then there is a fully compressed tuple $T \in B(H)^d$ with $\cW(T) = \Wmin{}(K)$ if and only if the isolated extreme points of $K$ are dense in $\overline{\text{ext}(K)}$. In this case, $T$ is the direct sum of the isolated extreme points without multiplicity.
\end{corollary}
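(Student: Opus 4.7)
The strategy is to apply the immediately preceding unnamed corollary (the existence part) and Corollary~\ref{cor:AEP_generated_uniqueness} (the structural part) to $\cC = \Wmin{}(K)$. The work reduces to two classical-convexity identifications for minimal matrix convex sets, after which the equivalence in the statement follows by a direct application of Krein--Milman and its Milman converse at level~$1$.

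First I would observe that $\Wmin{}(K)$ is the closed matrix convex hull of its absolute extreme points. The matrix extreme points (equivalently, absolute extreme points) of $\Wmin{}(K)$ all lie at level~$1$ and coincide with $\text{ext}(K)$: this is a standard feature of minimal matrix convex sets, coming from the fact that every element of $\Wmin{}(K)$ admits a normal dilation with joint spectrum in $K$ (so any level-$n$ point with $n>1$ is a nontrivial matrix convex combination of level-$1$ points not unitarily equivalent to it), while extreme points of $K$ are forced to be absolute extreme since extremeness in $K$ makes the one-dimensional subspace of such a normal dilation reducing. Combined with classical Krein--Milman for $K$, this shows $\Wmin{}(K)$ is the closed matrix convex hull of $\text{ext}(K)$, so the hypothesis of the immediately preceding corollary holds.

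Second, I would identify the crucial matrix extreme points of $\Wmin{}(K)$ with the isolated extreme points of $K$, which is precisely \cite[Proposition~2.5]{PasSha18}. Namely, if $y \in \text{ext}(K)$, then by the first step $\cE_y$ equals $\text{ext}(K)\setminus\{y\}$, and its closed matrix convex hull meets level~$1$ in $\overline{\text{conv}}(\text{ext}(K)\setminus\{y\})$. By Milman's converse to Krein--Milman, $y$ fails to be crucial matrix extreme iff $y \in \overline{\text{conv}}(\text{ext}(K)\setminus\{y\})$, iff $y \in \overline{\text{ext}(K)\setminus\{y\}}$, iff $y$ is not isolated in $\overline{\text{ext}(K)}$. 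The immediately preceding corollary then gives that a fully compressed $T$ with $\cW(T)=\Wmin{}(K)$ exists iff $\Wmin{}(K)$ is the closed matrix convex hull of the isolated extreme points of $K$; checking this at level~$1$ and applying Milman once more, this is equivalent to density of the isolated extreme points in $\overline{\text{ext}(K)}$. Under this condition Corollary~\ref{cor:AEP_generated_uniqueness} supplies $T \cong \bigoplus_{X \in \Lambda} X$ where $\Lambda$ ranges over the crucial matrix extreme points up to unitary equivalence, and since these are distinct scalar points of $\Cd$, $T$ is the direct sum of the isolated extreme points of $K$ without multiplicity.

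The only substantive obstacle is locating the two identifications in the first two steps, both of which are standard facts about $\Wmin{}(K)$ well documented in the references; once they are invoked, the equivalence collapses to the Krein--Milman/Milman dictionary between density in $\overline{\text{ext}(K)}$ and generation of $K$ as a closed convex hull.
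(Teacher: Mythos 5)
Your argument is correct and follows essentially the same route as the paper: the paper's proof is a one-line citation of \cite[Proposition 2.5]{PasSha18} identifying the crucial matrix extreme points of $\Wmin{}(K)$ with the isolated extreme points of $K$, after which the statement follows from the two preceding corollaries exactly as you describe. The extra detail you supply (that $\Wmin{}(K)$ is generated by its level-one absolute extreme points, and the Krein--Milman/Milman translation between density in $\overline{\operatorname{ext}(K)}$ and generation of $K$) is left implicit in the paper but is accurately filled in.
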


\begin{proof}
From \cite[Proposition 2.5]{PasSha18}, the crucial matrix extreme points of $\Wmin{}(K)$ are precisely the isolated extreme points of $K$.
\end{proof}

In particular, $\Wmin{}(\mathbb{D})$ cannot be written as the matrix range of a fully compressed tuple. 
Consistent with \cite[Proposition 7.5 and Corollary 8.3]{EHKM16}, it is definitely not the matrix range of a tuple of matrices. 
That is, the free spectrahedron $\Wmin{}(\mathbb{D}) = \cD_F$, 
where $F =  \left( \begin{bmatrix} 1 & 0 \\ 0 & -1\end{bmatrix}, \begin{bmatrix} 0 & 1 \\ 1 & 0 \end{bmatrix} \right)$, 
is not the polar dual of a free spectrahedron.

We close the section with discussion of how crucial matrix extreme points interface with the \textit{matrix exposed} points of \cite[Definition 6.1]{Kriel}. While matrix exposed points are defined generally, it causes no harm to assume that a matrix convex set consists of self-adjoints (by splitting into real/imaginary parts) and that $0$ is an interior point (by using an affine transformation, reducing the number of coordinates if necessary). That is, we may assume that
\[ 
 \cC \subseteq \mathbb{M}^d_{sa} = \bigcup\limits_{n=1}^\infty M_n(\bC)^d_{sa}
\]
and $0$ is an interior point of $\cC_1 \subseteq \bR^d$.

%%%%%%%%%%%%%%%%%%%%%%%%%%%%
\begin{definition}\cite[Definition 6.1]{Kriel}
Let $\cC \subseteq \mathbb{M}^d_{sa}$ be closed, bounded, and matrix convex. Then $Y \in \cC_n$ is \textit{matrix exposed} if there exist $A \in M_n(\bC)^d_{sa}$ and $B \in M_n(\bC)_{sa}$ such that
\[
 \cL := \Big\{X \in \mathbb{M}^d_{sa}: \sum\limits_{j=1}^d X_j \otimes A_j \leq I \otimes B \Big\} 
\]
has $\cC \subseteq \cL$ and $\cC_n \cap \partial \cL_n = \cU(Y)$, i.e. the unitary orbit of $Y$.
\end{definition}

If, in addition, $0$ is an interior point of $\cC_1$, we may assume that $B$ is the identity matrix, so that $\cL$ is the free spectrahedron 
\[ \cD_A := \Big\{X \in \mathbb{M}^d_{sa}: \sum\limits_{j=1}^d X_j \otimes A_j \leq I \Big\} .\]
In this case, we note that $\partial \cL_n = \partial \cD_A(n)$ consists of $X \in M_n(\bC)^d_{sa}$ such that $\sum_{j=1}^d X_j \otimes A_j \le I$ and this sum has $1$ as an eigenvalue.

%%%%%%%%%%%%%%%%%%%%%%%%%%%%%
\begin{remark}
If $Y \in \cC_1$, then $Y$ is matrix exposed in $\cC$ if and only if it is exposed in $\cC_1$. There is a different analogue of an exposed point in the first level, called an $\cS$-peaking state, in \cite[p.223]{Clo18}. However, these notions do not always agree, as the definition of an $\cS$-peaking state requires that the state extends uniquely from $\cS$ to $C^*(\cS)$, as in \cite[Theorem 1.1]{Clo18}. Similarly, \cite[Proposition 3.5]{Clo18} implies that there exist finite-dimensional operator systems $\cS$ which admit no $\cS$-peaking states, such as $\cS = \cS_T$ where $T = (T_1, T_2)$ is an irreducible pair of Cuntz isometries. This occurs even though $\cS_T$ generates the $C^*$-envelope.
\end{remark} 

The matrix dimension of $Y$, namely $n$, is used repeatedly in the definition of a matrix exposed point, just as in the definition of a matrix extreme point.  The results \cite[Proposition 6.19 and Theorem 6.21]{Kriel} further characterize matrix exposed points as matrix extreme points which are exposed in the level $\cC_n$ to which they belong. Crucial matrix extreme points, on the other hand, absolutely must see information in the higher levels, so it should not be possible to characterize them with an isolation condition solely on the unitary orbit in $\cC_n$. The following example demonstrates this.

%%%%%%%%%%%%%%%%%%%%%%%%%%%%
\begin{example}
Consider $\cC := \Wmax{}([-1, 1]^2)$, the largest matrix convex set whose first level is $[-1,1]^2$. Then $(1, 1)$ is a matrix/absolute extreme point of $\cC$, and it is in fact an isolated extreme point of $\cC_1$. However, $(1, 1)$ is not a crucial matrix extreme point. For $-1 < x < 1$, we have that $Y(x) := \left( \begin{bmatrix} 1 & 0 \\ 0 & -1 \end{bmatrix}, \begin{bmatrix} x & \sqrt{1 - x^2} \\ \sqrt{1 - x^2} & -x \end{bmatrix} \right)$ is matrix/absolute extreme, and $(1, 1)$ is a limit of compressions of $Y(x)$.
\end{example}

Exclusion of a point $Y$ from a closed matrix convex set is witnessed by Effros-Winkler separation \cite[Theorem 5.4]{EW97}. Thus, if $Y$ is crucial matrix extreme and $0$ is in the interior of the first level of $\cC \subset \mathbb{M}^d_{sa}$, then there exist some $A \in M_n(\bC)^d_{sa}$ and $\varepsilon > 0$ such that
\[
 X \in \cE_Y \,\, \implies \,\, \sum\limits_{j=1}^d A_j \otimes X_j \leq (1 - \varepsilon) I
\]
but $\sum\limits_{j=1}^d A_j \otimes Y_j \leq I$ has $1$ in its spectrum. Analogous to the classical setting for exposed and isolated extreme points, crucial matrix extreme points are matrix exposed, and any linear pencil witnessing this fact also demonstrates the Effros-Winkler separation.

%%%%%%%%%%%%%%%%%%%%%%%%%%%%
\begin{lemma}\label{lem:super_separating}
Let $\cC \subseteq \mathbb{M}^d_{sa}$ be a closed and bounded matrix convex set with $0$ in its interior, and assume $Y \in \cC_n$ is a crucial matrix extreme point of $\cC$. Then $Y$ is matrix exposed, and given any witness $A \in M_n(\bC)^d_{sa}$ such that $\cC \subseteq \cD_A$ and $\cC_n \cap \partial \cD_A(n) = \cU(Y)$, it follows that there exists $\varepsilon > 0$ with
\[
 X \in \cE_Y \hspace{.2 cm} \implies \hspace{.2 cm} \sum_{j=1}^d A_j \otimes X_j \leq (1 - \varepsilon) I. 
\]
\end{lemma}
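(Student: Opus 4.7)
The plan is to establish both halves of the lemma through a common Schmidt-based compression argument that reduces questions about boundary behavior at higher matrix dimensions back to level $n$. For the matrix exposedness claim, apply Theorem~\ref{thm:EW_variant} to separate $Y$ from the closed matrix convex hull of $\cE_Y$, which does not contain $Y$ by crucial matrix extremality. After rescaling so that $\sum_j A_j \otimes Y_j$ has top eigenvalue exactly $1$, the resulting $A \in M_n(\bC)^d_{sa}$ satisfies $\sum_j A_j \otimes X_j \leq \lambda_0^{-1} I$ strictly for every $X$ in that hull, where $\lambda_0 > 1$ is the unscaled top eigenvalue at $Y$. Combined with the Webster--Winkler theorem \cite[Theorem 4.3]{WW99} expressing $\cC$ as the closed matrix convex hull of $\cU(Y) \cup \cE_Y$, matrix convex closure of the pencil inequalities gives $\cC \subseteq \cD_A$. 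Checking $\cC_n \cap \partial \cD_A(n) = \cU(Y)$ then amounts to applying the compression procedure below to a single element $X$ on the boundary: an eigenvector of Schmidt rank strictly less than $n$ places the compressed point in the hull of $\cE_Y$ and violates the strict pencil bound, while Schmidt rank $r = n$ forces $X \cong Y$ via a Krein--Milman expansion in which the fully entangled eigenvector annihilates every $\cE_Y$-contribution.

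For the uniform separation claim with a general witness $A$, suppose for contradiction $X_k \in \cE_Y$ satisfy $\lambda_k := \lambda_{\max}(\sum_j A_j \otimes X_{k,j}) \to 1$, pick unit eigenvectors $v_k \in \bC^n \otimes \bC^{m_k}$, and write Schmidt decompositions $v_k = \sum_{l=1}^{r_k} \sigma_{k,l} \alpha_{k,l} \otimes \beta_{k,l}$ with $r_k \leq n$. Let $V_k \colon \bC^{r_k} \to \bC^{m_k}$ be the isometry sending $e_l$ to $\beta_{k,l}$, set $X'_k := V_k^* X_k V_k \in \cC_{r_k}$, and $\tilde v_k := (I \otimes V_k^*) v_k$. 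Since $(I \otimes V_k V_k^*) v_k = v_k$ by construction, the identity
\[
 \big\langle \tilde v_k, \big(\textstyle\sum_j A_j \otimes X'_{k,j}\big) \tilde v_k \big\rangle = \big\langle v_k, \big(\textstyle\sum_j A_j \otimes X_{k,j}\big) v_k \big\rangle = \lambda_k
\]
yields $\lambda_{\max}(\sum_j A_j \otimes X'_{k,j}) \geq \lambda_k$. Passing to a subsequence with $r_k = r$ constant and extracting, by compactness of $\cC_r$, a limit point $X'_\infty \in \cC_r \cap \partial \cD_A(r)$: if $r = n$, the witness hypothesis forces $X'_\infty \cong Y$, but $X'_k \in \ol{\conv}_{mc}(\cE_Y)$ as compressions of $X_k \in \cE_Y$ propagates to $X'_\infty$ and hence to $Y$, contradicting crucial matrix extremality. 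If $r < n$, then for any $y \in \cC_1$ the scalar tuple $Z_y := (y_j I_{n-r})_{j=1}^d \in \cC_{n-r}$ produces $X'_\infty \oplus Z_y \in \cC_n \cap \partial \cD_A(n) = \cU(Y)$, so $X'_\infty \oplus Z_y \cong Y$ for every $y \in \cC_1$; since $\cC_1$ contains a neighborhood of $0$ and hence uncountably many distinct points, this would force $Y$ to have uncountably many distinct one-dimensional summands of multiplicity at least $n - r \geq 1$, which is impossible for the finite-dimensional tuple $Y$.

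The main obstacle is the Schmidt compression identity, which exploits the bilinear structure of entangled eigenvectors to preserve the pencil value under compression. A secondary subtlety is the dimension-counting contradiction in the $r < n$ case, which crucially relies on $0 \in \Int(\cC_1)$ to produce the uncountable family of one-dimensional representations that $Y$ cannot absorb; and the verification of $\cC_n \cap \partial \cD_A(n) = \cU(Y)$ in the $r = n$ case of the first part requires the additional full-entanglement argument rather than a single compression reduction.
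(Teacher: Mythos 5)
Your second half --- the uniform $\varepsilon$ for an arbitrary witness $A$ --- is essentially the paper's argument: compress each $X_k \in \cE_Y$ to the (at most $n$-dimensional) range of the second tensor legs of a top eigenvector, pass to a subsequential limit $X'_\infty \in \cC_r \cap \partial\cD_A(r)$, and split into the cases $r=n$ (contradicting crucial matrix extremality) and $r<n$ (contradicting via padding). Your $r<n$ contradiction, obtained by varying the scalar padding $Z_y$ over $\cC_1$, is valid but more elaborate than the paper's, which pads with a single arbitrary $W \in \cC_{n-r}$ and observes that $X'_\infty \oplus W \cong Y$ is impossible because $Y$, being matrix exposed (hence matrix extreme), is irreducible while $X'_\infty \oplus W$ is not.

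The first half, however, has a genuine gap. Establishing that $Y$ is matrix exposed is not a formality: you must produce a pencil $A$ with $\cC \subseteq \cD_A$ \emph{and} $\cC_n \cap \partial\cD_A(n) = \cU(Y)$ exactly, and the Effros--Winkler separation of $Y$ from $\ol{\conv}(\cE_Y)$ (in the matrix convex sense) does not deliver the second condition. The obstruction is precisely the ``full entanglement'' you flag but do not prove: nothing in Theorem \ref{thm:EW_variant} forces the top eigenspace of $\sum_j A_j \otimes Y_j$ to be simple or its eigenvector to have Schmidt rank $n$. If the top eigenvector $v$ lies in $\bC^n \otimes W$ for a proper subspace $W \subset \bC^n$, then for any unitary $U$ acting as a scalar on $W$ and arbitrarily on $W^\perp$, the point $\tfrac12(Y + U^*YU) \in \cC_n$ also attains eigenvalue $1$ in the pencil, and for generic such $U$ it is not unitarily equivalent to $Y$; so $\cC_n \cap \partial\cD_A(n)$ strictly contains $\cU(Y)$. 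Likewise, your claim that a Schmidt-rank-$<n$ eigenvector of an arbitrary boundary point $X \in \cC_n$ places its compression ``in the hull of $\cE_Y$'' is unjustified here: unlike in the second half, $X$ is not assumed to lie in $\cE_Y$, and its compressions may well be compressions of $Y$ itself. Repairing this would amount to reproving the Straszewicz-type theorem for matrix convex sets, which is exactly what the paper invokes instead: by \cite[Corollary 6.23]{Kriel}, $\cC$ is the closed matrix convex hull of its matrix exposed points, which are matrix extreme by \cite[Proposition 6.19]{Kriel}; since $Y \notin \ol{\conv}(\cE_Y)$ (matrix convex hull), some matrix exposed point must lie in $\cU(Y)$, i.e.\ $Y$ is matrix exposed. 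You should either cite that result or supply the missing perturbation argument producing a pencil whose top eigenvector at $Y$ is simple and fully entangled.
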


\begin{proof}
 Assume $Y \in \cC_n$ is a crucial matrix extreme point of $\cC$. By \cite[Corollary 6.23]{Kriel}, $Y$ is in the closed matrix hull of the matrix exposed points, and these points are matrix extreme by \cite[Proposition 6.19]{Kriel}. By definition of a crucial matrix extreme point, the generating set must include a point in the unitary orbit of $Y$. That is, we may conclude $Y$ is itself matrix exposed. This yields a witness $A \in M_n(\bC)_{sa}$ such that $\cC \subseteq \cD_A$ and $\cC_n \cap \partial \cD_A(n)$ is precisely the unitary orbit of $Y$. 

With any choice of $A$ as above fixed, suppose there is a sequence $X^{(k)}$ of points in $\cE_Y$ such that the largest eigenvalue of the self-adjoint matrix $\sum\limits_{i=1}^d A_j \otimes X^{(k)}_j$ approaches $1$. Choosing an eigenvector $v^{(k)} = (v^{(k)}_1, \ldots, v^{(k)}_n) \in \bC^n \otimes \bC^{\text{dim}(X^{(k)})}$ for the largest eigenvalue and compressing $X^{(k)}$ to the span of the $v^{(k)}_j$ shows that there are $n \times n$ or smaller compressions $Z^{(k)}$ of $X^{(k)}$ such that the largest eigenvalue of  $\sum\limits_{j=1}^d A_j \otimes Z^{(k)}_j$ still approaches $1$.

We may choose a subsequence so that the matrix tuples $Z^{(k)}$ admit a limit $Z$ in some $\cC_m$, $m \leq n$. If $m < n$, then pick an arbitary $W \in \cC_{n-k}$ and note that $Z \oplus W \in \cC_n$ belongs to the boundary of $\mathcal{D}_A(n)$, so it is unitarily equivalent to $Y$. This contradicts the fact that matrix exposed points must be irreducible. If $m = n$, then we have that $Z$ itself is in the boundary of $\mathcal{D}_A(n)$, hence $Z \cong Y$. That is, $Y$ is a limit of compressions of the $X^{(k)}$, hence $Y$ is in the closed matrix convex hull of $\cE_Y$. This contradicts the fact that $Y$ is a crucial matrix extreme point. We conclude that there exists $\varepsilon > 0$ such that any $X \in \cE_Y$ satisfies $\sum\limits_{j=1}^d A_j \otimes X_j \leq (1 - \varepsilon) I$, as desired.
\end{proof}

 Our final result classifies crucial matrix extreme points in terms of 
\[
 \cF_Y :=  \{X \in \cC: X \text{ is matrix extreme, } X \text{ is not a compression of } Y\} 
\]
instead of $\cE_Y$, where we note as before that compressions include points in the unitary orbit. This result may be viewed through the lens of Theorem \ref{thm:no_prob_envelope} as a matrix convex condition which detects finite-dimensional strongly peaking representations of $C^*_e(\cS_T)$.

%%%%%%%%%%%%%%%%%%%%%%%%%%%%
\begin{theorem}\label{thm:new_crucial}
Let $\cC$ be a closed and bounded matrix convex set over $\bC^d$, and let $Y \in \cC$. Then the following are equivalent.

\begin{enumerate}
\item\label{item:blah} $Y$ is a crucial matrix extreme point. That is, $Y$ is not in the closed matrix convex hull of
\[
 \cE_Y = \{X \in \cC: X \text{ is matrix extreme}, \, X \not\cong  Y\}.
\]
\item\label{item:compression_sent} $Y$ is irreducible, and $Y$ is not in the closed matrix convex hull of
\[
 \cF_Y = \{X \in \cC: X \text{ is matrix extreme, } X \text{ is not a compression of } Y\}.
\]
\item\label{item:direct_limit} $Y$ is a matrix extreme point, and $Y$ is not a limit of compressions of $Z_k \in \cE_Y$.
\end{enumerate}
\end{theorem}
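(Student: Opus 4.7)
The plan is to organize the proof around (1) $\Leftrightarrow$ (3), handled by the matrix Krein--Milman theorem \cite[Theorem 4.3]{WW99} together with its Milman-type converse \cite[Theorem 4.6]{WW99}, and (1) $\Leftrightarrow$ (2), where the forward direction is immediate. The key structural observation is that $\cF_Y \subseteq \cE_Y$, because a compression of $Y$ to a subspace of the same dimension is necessarily unitary; hence any matrix extreme $X$ that is not a compression of $Y$ is, in particular, not unitarily equivalent to $Y$. Combined with the fact that matrix extreme points are irreducible, this yields (1) $\Rightarrow$ (2) immediately.

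For (1) $\Leftrightarrow$ (3), I would argue as follows. If $Y$ is crucial, then $Y$ must be matrix extreme, since otherwise Krein--Milman would place $Y$ in the closed matrix convex hull of the matrix extreme points of $\cC$, all of which lie in $\cE_Y$ when $Y$ itself is not matrix extreme, a contradiction. The remaining content of (3) is automatic, since limits of compressions of elements of $\cE_Y$ lie in $\overline{\mathrm{conv}}(\cE_Y)$. For the converse, if $Y$ is matrix extreme and nevertheless lies in $\overline{\mathrm{conv}}(\cE_Y)$, then $Y$ is matrix extreme in that closed matrix convex hull, and the Milman converse forces $Y$ to be a limit of compressions of elements of $\cE_Y$, contradicting (3).

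The main obstacle is (2) $\Rightarrow$ (1). Proceeding by contrapositive, I would assume $Y \in \overline{\mathrm{conv}}(\cE_Y)$ with $Y$ irreducible, and aim to produce $Y \in \overline{\mathrm{conv}}(\cF_Y)$. Approximating $Y$ by matrix convex combinations $\sum_i V_{k,i}^* X_{k,i} V_{k,i}$ with $X_{k,i} \in \cE_Y$, split each sum as $A_k + B_k$ according to whether $X_{k,i}$ lies in $\cF_Y$ or is a proper matrix extreme compression $W_{k,i}^* Y W_{k,i}$ of $Y$; the $B$-part then takes the form $B_k = \sum_i U_{k,i}^* Y U_{k,i}$ with each $U_{k,i} := W_{k,i}V_{k,i}$ of rank strictly less than $\dim Y$, and total weight $P_k := \sum_i U_{k,i}^* U_{k,i} \leq I$. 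If $P_k \to 0$ along a subsequence, inserting a fixed $Z \in \cF_Y$ weighted by $P_k^{1/2}(\cdot)P_k^{1/2}$ produces genuine matrix convex combinations from $\cF_Y$ converging to $Y$, contradicting (2). When $P_k \to P \neq 0$ in the limit, I would extract a CP map $\Phi$ on $M_{\dim Y}(\bC)$ with $\Phi(I) = P$, Kraus operators of rank $< \dim Y$, and satisfying a fixed-point-like identity on $Y$; the technical heart of the proof is then to invoke irreducibility of $Y$ (so $C^*(\cS_Y) = M_{\dim Y}(\bC)$) together with a Kadison-inequality or multiplicative-domain argument to force $P=0$, thereby reducing to the previous case. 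The case $\cF_Y = \emptyset$ is handled separately by noting that matrix Krein--Milman would then force $\cC = \overline{\mathrm{conv}}(\{Y\})$, and irreducibility of $Y$ prevents $Y$ from being in the closed matrix convex hull of its own proper compressions.
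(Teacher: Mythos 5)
Your treatment of (1) $\Leftrightarrow$ (3) via the Webster--Winkler Krein--Milman theorem and its Milman converse, and of (1) $\Rightarrow$ (2) via the inclusion $\cF_Y \subseteq \cE_Y$, is correct and matches the paper. The genuine gap is in (2) $\Rightarrow$ (1). Your contrapositive argument reduces to the case where the approximating matrix convex combinations carry a nonvanishing ``$Y$-part,'' i.e.\ in the limit a CP map $\Phi$ on $M_{\dim Y}(\bC)$ with Kraus operators of rank $< \dim Y$, with $\Phi(I) = P \neq 0$ and $Y = A + \Phi(Y)$; at exactly this point you write that a ``Kadison-inequality or multiplicative-domain argument'' will force $P = 0$, but you do not supply it, and it is not routine. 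Forcing $P = 0$ is not even quite the right target: with $P \neq 0$ one could still iterate $Y = \sum_{j \leq m} \Phi^j(A) + \Phi^{m+1}(Y)$ and win provided $\Phi^m(I) \to 0$, so what must actually be excluded is a nonzero positive fixed point of $\Phi$, and relating such a fixed point to a reducing subspace of $Y$ requires a real argument using both the identity $Y = A + \Phi(Y)$ and the rank bound on the Kraus operators (a bound which is itself not obviously inherited by the limit $\Phi_k \to \Phi$; one needs closedness of the cone of Choi matrices of bounded Schmidt number). Your subcase $\cF_Y = \emptyset$ exhibits the same unresolved step in its purest form: there you must show that an irreducible $Y$ cannot lie in the closed matrix convex hull of its own proper compressions, which is exactly the $A = 0$, $\Phi(I) = I$ instance of the missing argument.

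The paper sidesteps all of this by proving (2) $\Rightarrow$ (3) rather than (2) $\Rightarrow$ (1). Under hypothesis (2), Krein--Milman yields a matrix extreme point $P_0$ outside the closed matrix convex hull of $\cF_Y$; such a point, and every matrix extreme point in a dilation chain above it (Kriel, Lemma 6.12), must be a compression of $Y$, and finite-dimensionality of $Y$ forces the chain to terminate in an absolute extreme point, which by irreducibility of $Y$ must be $Y$ itself. Once $Y$ is known to be absolute extreme, the observation that $\cF_Y$ and $\cE_Y$ coincide on every level $m \geq \dim Y$ finishes the proof, since the Milman converse only produces approximating points $Z_k$ of dimension at least $\dim Y$, and these automatically lie in $\cF_Y$. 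If you wish to keep your direct route you must carry out the fixed-point analysis in full; otherwise replace your (2) $\Rightarrow$ (1) with this dilation argument.
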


\begin{proof}
${\bf (\ref{item:blah}) \!\Rightarrow\! (\ref{item:compression_sent})}$: Every crucial matrix extreme point is irreducible, and $\cF_Y \subseteq \cE_Y$ holds, so the implication is trivial.

${\bf (\ref{item:compression_sent}) \!\Rightarrow\! (\ref{item:direct_limit})}$: Let $P_0$ be any matrix extreme point of $\cC$ which is not in the closed matrix hull of $\cF_Y$, noting such a point exists by  \cite[Theorem 4.3]{WW99}. By definition of $\cF_Y$, $P_0$ is a compression of $Y$, as is any matrix extreme dilation $P_0 \prec P_1 \prec P_2 \prec \ldots$ given by \cite[Lemma 6.12]{Kriel}. Since $Y$ is finite-dimensional, a contradiction is reached unless the dilation terminates in finitely many steps, i.e. some $P_k$ is absolute extreme. An absolute extreme point $P_k$ is a compression of $Y$, and such points admit only trivial dilations. However, $Y$ is assumed irreducible, so $Y \cong P_k$ is itself absolute extreme (and hence matrix extreme).

The remainder of the implication follows easily in the contrapositive, as if $m \geq \text{dim}(Y)$, $\cF_Y \cap M_{m}(\bC)^d = \cE_Y \cap M_m(\bC)^d$.

${\bf (\ref{item:direct_limit}) \!\Rightarrow\!  (\ref{item:blah})}$: This follows immediately from \cite[Theorem 4.6]{WW99}. If $Y$ is a matrix extreme point in the closed matrix convex hull of $\cE_Y$, then it is a limit of compressions of $Z_k \in \cE_Y$.
\end{proof}

%%%%%%%%%%%%%%%%%%%%%%%%%%%%

\section*{Acknowledgments}
We would like to thank the referee for helpful comments.

%%%%%%%%%%%%%%%%%%%%%%%%%%%%%%%%%%%%%%%
\bibliographystyle{amsplain}

\end{document}